\numberwithin{equation}{section}
\newtheorem{theorem}{Theorem}[section]
\newtheorem{lemma}[theorem]{Lemma}
\newtheorem{corollary}[theorem]{Corollary}
\theoremstyle{definition}
\newtheorem{definition}[theorem]{Definition}
\newtheorem{observation}[theorem]{Observation}
\newtheorem{def-prop}[theorem]{Definition-Proposition}
\newtheorem{example}[theorem]{Example}
\newtheorem{notation}[theorem]{Notation}
\newtheorem*{Mysketch}{Sketch of proof} % \newtheorem establishes the object heading
\popQED\end{Mysketch}}
\DeclareMathOperator{\reg}{reg}
\DeclareMathOperator{\Ass}{Ass}
\DeclareMathOperator{\depth}{depth}
\DeclareMathOperator{\supp}{supp}
\newcommand{\ob}{\textbf{0}}
\newcommand{\GS}{\mathcal{G}}
\newcommand{\mm}{\mathfrak{m}}
\newcommand{\nn}{\mathfrak{n}}
\newcommand{\pp}{\mathfrak{p}}
\newcommand{\qq}{\mathfrak{q}}
\newcommand{\rr}{\mathfrak{r}}
\definecolor{violet}{rgb}{0.5,0,1}
\newcommand{\ZZ}{{\mathbb Z}}
\newcommand{\NN}{{\mathbb N}}
\def\1{{\bf 1}}
\def\0{{\bf 0}}
\newcommand{\Spec}{\text{Spec}}
\begin{document}

\title{Local Cohomology and Degree Complexes \\ of Monomial Ideals}

\author{Jonathan L. O'Rourke}
\address{Tulane University \\ Department of Mathematics \\
6823 St. Charles Ave. \\ New Orleans, LA 70118, USA}
\email{jorourk2@tulane.edu}
\urladdr{jonathanorourke.com}

\keywords{local cohomology, monomial ideal, regularity, symbolic power, fiber product}
\subjclass[2010]{13D45, 13A30, 14B15, 05E40}

\begin{abstract} This paper examines the dimension of the graded local cohomology $H_\mm^p(S/K^s)_\gamma$ and $H_\mm^p(S/K^{(s)})$ for a monomial ideal $K$. This information is encoded in the reduced homology of a simplicial complex called the degree complex. We explicitly compute the degree complexes of ordinary and symbolic powers of sums and fiber products of ideals, as well as the degree complex of the mixed product, in terms of the degree complexes of their components. We then use homological techniques to discuss the cohomology of their quotient rings. In particular, this technique allows for the explicit computation of $\reg ((I + J + \mm\nn)^{(s)})$ in terms of the regularities of $I^{(i)}$ and $J^{(j)}$.
\end{abstract}

\maketitle

%%%%%%%%%%%%%%%%%%%%%%%%%%%%%%%%%%%%%%%%%%%%%%%

\section{Introduction} \label{sec.intro}

%\JOcomment{Rewrite the introduction, paying attention to (1) order [Intro, Objects, Method, Results, Outline], (2) Remove the textbf stuff; degree complexes are not equal results to cohomology; write cohomology results as consequences, (3) Correct what's wrong}
	
	The local cohomology of the quotient ring $S/K$, with $K \subset S$ an ideal, encodes information about the computational complexity of $K$. Studying this cohomology has been a subject of increasing interest in recent years (see, e.g., \cite{DM17}, \cite{KPU15}, \cite{Rai18}). A popular program of research in recent years involves finding bounds for where local cohomology has nonzero dimension, via studying Castlenuovo-Mumford regularity and depth (see, e.g., \cite{AB17}, \cite{ABS19}, \cite{Ban15}, \cite{BBH18}, \cite{HNTT19}, \cite{JK19}, \cite{JNS18}, \cite{JS18}, \cite{MT19}, \cite{NSY17}). By calculating all of the local cohomology modules, we obtain a more complete picture of the complexity of the ideal.
	
	In particular, this paper examines what can be determined about the local cohomology modules of (ordinary and symbolic) powers of ideals that can be expressed as sums and fiber products of ideals in polynomial rings sharing no variables. The sum of ideals $I \subset A$ and $J \subset B$ in polynomial rings sharing no variables is of interest in an algebro-geometric sense, in that $\Spec(A/I) \times_k \Spec(B/J) = \Spec((A \otimes_k B)/(I + J))$ (see \cite{HNTT19}). For $I$ and $J$ edge ideals of hypergraphs, $I + J$ corresponds to the union of the two graphs. The \textit{fiber product} of $I \subset A$ and $J \subset B$, where $\mm$ and $\nn$ are the maximal homogeneous ideals of $A$ and $B$ respectively, is defined to be $I + J + \mm\nn$ (see \cite{NV19} and sources within). One may observe that ideals of this form can decompose the ring $(A \otimes_k B)/(I + J + \mm\nn) $ as a direct sum of rings: $A/I \oplus B/J$. Furthermore, if $I$ and $J$ are viewed as edge ideals of hypergraphs, $I + J + \mm\nn$ corresponds to the (graph-theoretic) \textit{join} of the hypergraphs. We are interested in studying powers of homogeneous ideals due to a celebrated result of Cutkosky, Herzog, and Trung (\cite{CHT99}), giving that, for $s \gg 0$, $\reg(I^s) = ds + b$, where $d$ depends on $I$. As yet, much less is known about the symbolic powers of these ideals.
	
	In this paper, we examine the dimension of $\ZZ^s$-graded local cohomology modules of $S/K^s$ and $S/K^{(s)}$, where $K$ is a sum or fiber product of monomial ideals. Due to a formula of Takayama \cite{Tak05}, this problem can be reduced to examining the reduced homology of a related simplicial complex called the \textit{degree complex}. The degree complex has been a useful tool of study for examining ideal invariants that are based on where the dimension of the local cohomology is nonzero, such as regularity and depth (see, e.g., \cite{HT16}, \cite{MT19}, \cite{TT14}). This paper seeks to compute these dimensions explicitly.
	
Let $A = k[x_1,\ldots,x_m]$, $B = k[x_{m+1},\ldots,x_n]$ be monomial ideals, $S = A \otimes_k B$, and let $\mm = (x_1,\ldots,x_m)$ and $\nn = (x_{m+1},\ldots,x_n)$ be the respective maximal homogeneous ideals of $A$ and $B$. Let $\Delta_X$ be the simplex on $\{1,\ldots,m\}$ and $\Delta_Y$ be the simplex on $\{m+1,\ldots,n\}$. The main results of the paper are how we may describe the degree complexes of (ordinary and symbolic) powers of sums and fiber products in terms of their constituent ideals.

\begin{theorem} Let $I \subset A$ and $J \subset B$ monomial ideals, let $\gamma = (\alpha,\beta) \in \ZZ^n$, and let $|\gamma| := \sum_{i=1}^n \gamma_i$. 
	\begin{enumerate}
%	\item (\Cref{adding}) $\Delta_\gamma(I+J) = \Delta_\alpha(I) \ast \Delta_\beta(J)$
	\item (\Cref{multiplying}) $\Delta_\gamma(IJ) = (\Delta_\alpha(I) \ast \Delta_Y) \cup (\Delta_X \ast \Delta_\beta(J))$
	\item (\Cref{powerofsum}) $\Delta_\gamma((I+J)^s) = \bigcup_{j=1}^s \Delta_\alpha(I^j) \ast \Delta_\beta(J^{s-j+1})$
	\item (\Cref{symbolicsum}) $\Delta_\gamma((I+J)^{(s)}) = \bigcup_{j=1}^s \Delta_\alpha(I^{(j)}) \ast \Delta_\beta(J^{(s-j+1)})$
	\item (\Cref{ordinaryfiberproduct}) Let $I$ and $J$ be additionally squarefree. Let 
	
$$\mathcal{A} = \begin{cases} \{\emptyset \subsetneq F \in \Delta_\alpha(I^{s-|\beta|})\}, & G_\beta = \emptyset, |\beta| < s \\ \emptyset, & G_\beta \neq \emptyset \text{ or } |\beta| \geq s \end{cases} \text{ be nonempty faces of } \Delta_\alpha(I^{s-|\beta|});$$ $$\mathcal{B} = \begin{cases} \{\emptyset \subsetneq F \in \Delta_\beta(J^{s-|\alpha|})\}, & G_\alpha = \emptyset, |\alpha| < s \\ \emptyset, & G_\alpha \neq \emptyset \text{ or } |\alpha| \geq s \end{cases} \text{ be nonempty faces of } \Delta_\beta(J^{s-|\alpha|}).$$ Then the nonempty faces of $\Delta_\gamma((I+J+\mm\nn)^s)$ are given by
	
	$$\{\emptyset \subsetneq F \in \Delta_\gamma((I+J+\mm\nn)^s)\} = \mathcal{A} \cup \mathcal{B}$$
	
	%Then if any of $\Delta_\gamma((I+J+\mm\nn)^s)$, $\Delta_\alpha(I^{s-|\beta|})$, and $\Delta_\beta(J^{s-|\alpha|})$ has a nonempty face, Let 
	
%$$\Delta_\gamma((I+J+\mm\nn)^s) = \begin{cases} \Delta_\alpha(I^{s-|\beta|}) \cup \Delta_\beta(J^{s-|\alpha|}), & G_\gamma = \emptyset, |\alpha|,|\beta| < s \\
%	\Delta_\alpha(I^{s-|\beta|}), & (G_\alpha \neq \emptyset \text{ or } |\alpha| \geq s), G_\beta = \emptyset, |\beta| < s \\
%	\Delta_\beta(J^{s-|\alpha|}), & (G_\beta \neq \emptyset \text{ or } |\beta| \geq s), G_\alpha = \emptyset, |\alpha| < s \\
%	\emptyset, & \text{otherwise.} \end{cases}$$		

%Let $\mathcal{A} = \begin{cases} \{\emptyset \subsetneq F \in \Delta_\alpha(I^{s-|\beta|})\}, & G_\beta = \emptyset, |\beta| < s \\ \emptyset, & G_\beta \neq \emptyset \text{ or } |\beta| \geq s \end{cases}$, and let $\mathcal{B} = \begin{cases} \{\emptyset \subsetneq F \in \Delta_\beta(J^{s-|\alpha|})\}, & G_\alpha = \emptyset, |\alpha| < s \\ \emptyset, & G_\alpha \neq \emptyset \text{ or } |\alpha| \geq s \end{cases}$. Then

	\item (\Cref{symbolicfiberproduct}) Let $I$ and $J$ be additionally squarefree. Let %Then if any of $\Delta_\gamma((I+J+\mm\nn)^{(s)})$, $\Delta_\alpha(I^{(s-|\beta|)})$, and $\Delta_\beta(J^{(s-|\alpha|)})$ has a nonempty face, 
%	$$\Delta_\gamma((I+J+\mm\nn)^{(s)}) = \begin{cases} \Delta_\alpha(I^{(s-|\beta|)}) \cup \Delta_\beta(J^{(s-|\alpha|)}), & G_\gamma = \emptyset, |\alpha|,|\beta| < s \\
%	\Delta_\alpha(I^{(s-|\beta|)}), &  (G_\alpha \neq \emptyset \text{ or } |\alpha| \geq s), G_\beta = \emptyset, |\beta| < s \\
%	\Delta_\beta(J^{(s-|\alpha|)}), &  (G_\beta \neq \emptyset \text{ or } |\beta| \geq s), G_\alpha = \emptyset, |\alpha| < s \\
%	\emptyset, & \text{otherwise.}
%	\end{cases}$$
$$\mathcal{A'} = \begin{cases} \{\emptyset \subsetneq F \in \Delta_\alpha(I^{(s-|\beta|)})\}, & G_\beta = \emptyset, |\beta| < s \\ \emptyset, & G_\beta \neq \emptyset \text{ or } |\beta| \geq s \end{cases} \text{ be nonempty faces of } \Delta_\alpha(I^{(s-|\beta)|});$$ $$\mathcal{B'} = \begin{cases} \{\emptyset \subsetneq F \in \Delta_\beta(J^{(s-|\alpha|)})\}, & G_\alpha = \emptyset, |\alpha| < s \\ \emptyset, & G_\alpha \neq \emptyset \text{ or } |\alpha| \geq s \end{cases} \text{ be nonempty faces of } \Delta_\beta(J^{(s-|\alpha|)}).$$ Then the nonempty faces of $\Delta_\gamma((I+J+\mm\nn)^{(s)})$ are given by
	
	$$\{\emptyset \subsetneq F \in \Delta_\gamma((I+J+\mm\nn)^{(s)})\} = \mathcal{A'} \cup \mathcal{B'}$$

	\item (\Cref{gmp}) Let $I_1 \subseteq I_2 \subseteq A$ and $J_1 \subseteq J_2 \subseteq B$ be monomial ideals, and let $H = I_1 J_2 + I_2 J_1$ be the mixed product of $I_1, I_2, J_1, J_2$ (cf. \cite{HT10}). Let $\gamma = (\alpha,\beta) \in \ZZ^m \times \ZZ^{n-m}$. Then
$$\Delta_\gamma(H) = [\Delta_X \ast \Delta_\beta(J_2)] \cup [\Delta_\alpha(I_1) \ast \Delta_\beta(J_1)] \cup [\Delta_\alpha(I_2) \ast \Delta_Y]$$
%	 \JOcomment{Rewrite to match final version of theorem.}
%	$$\Delta_\gamma(H) = \big[\Delta_X \ast \Delta_\beta(J_{\min\{r,s\}})\big]\cup \big[\Delta_\alpha(I_{\min\{a,b\}}) \ast \Delta_Y\big] \cup \big[\Delta_\alpha(I_b) \ast \Delta_\beta(J_r)\big] \cup \big[\Delta_\alpha(I_a) \ast \Delta_\beta(J_s)\big].$$
	\end{enumerate}
\end{theorem}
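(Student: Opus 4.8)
The plan is to treat $H$ as a sum of two products of ideals coming from $A$ and $B$ separately, $H = (I_1 J_2) + (I_2 J_1)$, and to reduce to \Cref{multiplying} together with the behaviour of the degree complex under sums. First I would record (or cite from the earlier sections) the elementary fact that for monomial ideals $K, L \subseteq S$ one has $\Delta_\gamma(K + L) = \Delta_\gamma(K) \cap \Delta_\gamma(L)$: localization commutes with sums, and a monomial lies in a sum of monomial ideals exactly when it lies in one of the summands, so $x^\gamma \in (K+L)S[x_i^{-1} : i \in F]$ iff $x^\gamma \in KS[x_i^{-1} : i \in F]$ or $x^\gamma \in LS[x_i^{-1} : i \in F]$; negating the defining condition of $\Delta_\gamma$ gives the claim. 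Hence $\Delta_\gamma(H) = \Delta_\gamma(I_1 J_2) \cap \Delta_\gamma(I_2 J_1)$.

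Next I would apply \Cref{multiplying} to each factor, with the roles of $(I,J)$ played by $(I_1,J_2)$ and by $(I_2,J_1)$ respectively, to obtain
\[ \Delta_\gamma(I_1 J_2) = (\Delta_\alpha(I_1) \ast \Delta_Y) \cup (\Delta_X \ast \Delta_\beta(J_2)), \]
\[ \Delta_\gamma(I_2 J_1) = (\Delta_\alpha(I_2) \ast \Delta_Y) \cup (\Delta_X \ast \Delta_\beta(J_1)). \]
It then remains to intersect these two unions. For this I would use two routine identities for joins on disjoint vertex sets (writing $\Delta_X$, resp.\ $\Delta_Y$, for the full simplex on the present $X$-, resp.\ $Y$-, vertices, so that joining with it imposes no condition on that block of coordinates): $(C_1 \ast D) \cap (C_2 \ast D) = (C_1 \cap C_2) \ast D$ and $(C_1 \ast \Delta_Y) \cap (\Delta_X \ast D_2) = C_1 \ast D_2$. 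Distributing the intersection over the two unions and applying these identities produces the four terms $(\Delta_\alpha(I_1) \cap \Delta_\alpha(I_2)) \ast \Delta_Y$, $\;\Delta_\alpha(I_1) \ast \Delta_\beta(J_1)$, $\;\Delta_\alpha(I_2) \ast \Delta_\beta(J_2)$, and $\Delta_X \ast (\Delta_\beta(J_1) \cap \Delta_\beta(J_2))$.

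Finally I would invoke the standing hypotheses $I_1 \subseteq I_2$ and $J_1 \subseteq J_2$. Since localization preserves inclusions and a larger monomial ideal is more likely to contain a given monomial, $\Delta_\bullet(-)$ is order-reversing in the ideal, so $\Delta_\alpha(I_2) \subseteq \Delta_\alpha(I_1)$ and $\Delta_\beta(J_2) \subseteq \Delta_\beta(J_1)$. Thus the first and fourth terms collapse to $\Delta_\alpha(I_2) \ast \Delta_Y$ and $\Delta_X \ast \Delta_\beta(J_2)$, and the third term $\Delta_\alpha(I_2) \ast \Delta_\beta(J_2)$ is contained in $\Delta_X \ast \Delta_\beta(J_2)$ (as $\Delta_\alpha(I_2) \subseteq \Delta_X$), hence redundant in the union. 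What remains is exactly $[\Delta_X \ast \Delta_\beta(J_2)] \cup [\Delta_\alpha(I_1) \ast \Delta_\beta(J_1)] \cup [\Delta_\alpha(I_2) \ast \Delta_Y]$, as asserted.

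There is no single hard step: the argument is bookkeeping on simplicial joins once \Cref{multiplying} and the sum identity are in hand. The two points that genuinely require the hypotheses $I_1 \subseteq I_2$, $J_1 \subseteq J_2$ — and hence deserve care — are the order-reversing property of $\Delta_\bullet(-)$ used to simplify the first and fourth terms, and the absorption of the cross term $\Delta_\alpha(I_2) \ast \Delta_\beta(J_2)$ into $\Delta_X \ast \Delta_\beta(J_2)$; without these inclusions one would instead be left with $[\Delta_\alpha(I_1+I_2) \ast \Delta_Y] \cup [\Delta_X \ast \Delta_\beta(J_1+J_2)] \cup [\Delta_\alpha(I_1) \ast \Delta_\beta(J_1)] \cup [\Delta_\alpha(I_2) \ast \Delta_\beta(J_2)]$. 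As an alternative one could bypass \Cref{multiplying} and argue directly from the face characterization of the degree complex: splitting a face as $F = F_X \sqcup F_Y$ along $X = \{1,\dots,m\}$, $Y = \{m+1,\dots,n\}$, and testing against the generators $uv$ with $u \in \mingen(I_1)$, $v \in \mingen(J_2)$ and $u'v'$ with $u' \in \mingen(I_2)$, $v' \in \mingen(J_1)$ of $H$, one finds $F \in \Delta_\gamma(H)$ iff $[F_X \in \Delta_\alpha(I_1) \text{ or } F_Y \in \Delta_\beta(J_2)]$ and $[F_X \in \Delta_\alpha(I_2) \text{ or } F_Y \in \Delta_\beta(J_1)]$; the claimed description then follows from the Boolean identity $(a \vee b) \wedge (c \vee d) = b \vee c \vee (a \wedge d)$, which holds precisely when $c \Rightarrow a$ and $b \Rightarrow d$, i.e.\ again because of the inclusions.
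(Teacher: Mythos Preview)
Your proposal is correct and follows essentially the same route as the paper: apply the sum identity $\Delta_\gamma(K+L)=\Delta_\gamma(K)\cap\Delta_\gamma(L)$ (the paper's \Cref{adding}) and \Cref{multiplying} to each summand, distribute the intersection over the two unions, simplify the four resulting terms using the order-reversing property of $\Delta_\bullet(-)$ (the paper's \Cref{subsets}), and absorb the redundant cross term $\Delta_\alpha(I_2)\ast\Delta_\beta(J_2)$ into $\Delta_X\ast\Delta_\beta(J_2)$. The alternative Boolean/face-characterization argument you sketch at the end is a pleasant bonus not in the paper, but the main line is identical.
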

	
From these decompositions, we may say the following about their graded local cohomologies as a consequence.
\begin{theorem} Let $I \subset A$ and $J \subset B$ be monomial ideals, let $\mm$ and $\nn$ be the respective maximal homogeneous ideals of $A$ and $B$, and let $\gamma = (\alpha,\beta) \in \ZZ^n$. 
	\begin{enumerate}
	\item (\Cref{cohomologyofsum})  $\dim_k H_{\mm+\nn}^p(S/(I+J))_\gamma = \sum_{u+v=p+1} \dim_k H_\mm^u(A/I)_\alpha \cdot \dim_k H_\nn^v(B/J)_\beta$
	\item (\Cref{cohomologyofproduct}) $\dim_k H_{\mm+\nn}^p(S/(IJ))_\gamma = \sum_{u + v = p} \dim_k H_\mm^u (A/I)_\alpha \cdot \dim_k H_\nn^v(B/J)_\beta$
	\item (\Cref{lescohom}) There is a collection of long exact sequences which relate $H_{\mm+\nn}^p(S/(I+J)^s)_\gamma$ to the modules $H_\mm^u(A/I^i)_\alpha$ and $H_\nn^v(B/J^j)_\beta$ and which relate $H_{\mm+\nn}^p(S/(I+J)^{(s)})_\gamma$ to the modules $H_\mm^u(A/I^{(i)})_\alpha$ and $H_\nn^v(B/J^{(j)})_\beta$ with $1 \leq i,j \leq s$.
	\item (\Cref{cohomologyfiberproduct}) 
	\begin{enumerate}
	\item Let $\diamond$ be the condition that each of $\Delta_\alpha(I^{s-|\beta|})$ and $\Delta_\beta(J^{s-|\alpha|})$ hsa a nonempty face and that $p = 1$. Then for $p \geq 1$, 
 	
	$\dim_k H_{\mm+\nn}^p(S/(I+J+\mm\nn)^s) = $
	$$\begin{cases} \dim_k H_\mm^p(A/I^{s-|\beta|})_\alpha + \dim_k H_\nn^p(B/J^{s-|\alpha|})_\beta, & \text{if not } \diamond \\
	\dim_k H_\mm^p(A/I^{s-|\beta|})_\alpha + \dim_k H_\nn^p(B/J^{s-|\alpha|})_\beta + 1, & \text{if } \diamond. \end{cases}$$
	\item Let $\diamond'$ be the condition that each of $\Delta_\alpha(I^{(s-|\beta|)})$ and $\Delta_\beta(J^{(s-|\alpha|)})$ has a nonempty face and that $p = 1$. Then	for $p \geq 1$,
	
	$\dim_k H_{\mm+\nn}^p(S/(I+J+\mm\nn)^{(s)}) = $
	$$\begin{cases} \dim_k H_\mm^p(A/I^{(s-|\beta|)})_\alpha + \dim_k H_\nn^p(B/J^{(s-|\alpha|)})_\beta, & \text{if not } \diamond' \\
	\dim_k H_\mm^p(A/I^{(s-|\beta|)})_\alpha + \dim_k H_\nn^p(B/J^{(s-|\alpha|)})_\beta + 1, & \text{if } \diamond'. \end{cases}$$	
	\end{enumerate}
	\end{enumerate}
\end{theorem}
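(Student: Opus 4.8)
The plan is to route every assertion through Takayama's formula, which expresses $\dim_k H^p_{\mm+\nn}(S/K)_\gamma$ as the dimension of a reduced simplicial homology group of the degree complex $\Delta_\gamma(K)$, and likewise expresses $\dim_k H^u_\mm(A/L)_\alpha$ and $\dim_k H^v_\nn(B/L')_\beta$ through $\Delta_\alpha(L)$ and $\Delta_\beta(L')$. Because $G_\gamma$ is the disjoint union of $G_\alpha$ and $G_\beta$, once a \emph{topological} identity between the degree complexes is established, matching the homological degrees on the two sides of each formula is pure arithmetic. Thus each of (1)--(4) reduces to computing the reduced homology of the degree complex supplied by the first theorem and translating back. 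At the outset I would set aside the multidegrees $\gamma$ outside the range in which Takayama's formula is an honest equality (there every module and every relevant homology group vanishes), so all arguments run on the remaining bounded degrees.

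For (1), \Cref{powerofsum} with $s=1$ gives $\Delta_\gamma(I+J)=\Delta_\alpha(I)\ast\Delta_\beta(J)$, a simplicial join, so I apply the classical K\"unneth formula for joins, $\widetilde H_n(\Lambda_1\ast\Lambda_2;k)\cong\bigoplus_{i+j=n-1}\widetilde H_i(\Lambda_1;k)\otimes_k\widetilde H_j(\Lambda_2;k)$; over the field $k$ the dimensions then multiply and add, and the offset in the summation index is dictated by the $-1$ in the join formula and by $|G_\gamma|=|G_\alpha|+|G_\beta|$. (One can also bypass the degree complex: $S/(I+J)\cong A/I\otimes_k B/J$, the \v{C}ech complex on $\mm+\nn$ is the tensor product of the \v{C}ech complexes on $\mm$ and $\nn$, and the algebraic K\"unneth theorem over $k$ gives the statement at once.) For (2), \Cref{multiplying} writes $\Delta_\gamma(IJ)$ as $(\Delta_\alpha(I)\ast\Delta_Y)\cup(\Delta_X\ast\Delta_\beta(J))$, and each member is a cone---a join with a full simplex---hence acyclic; the Mayer--Vietoris sequence of this cover therefore collapses to $\widetilde H_n(\Delta_\gamma(IJ);k)\cong\widetilde H_{n-1}(\Delta_\alpha(I)\ast\Delta_\beta(J);k)$, and I finish with the join formula exactly as in (1).

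For (3), \Cref{powerofsum} and \Cref{symbolicsum} present $\Delta_\gamma((I+J)^s)$ and $\Delta_\gamma((I+J)^{(s)})$ as the union of the joins $\Delta_\alpha(I^j)\ast\Delta_\beta(J^{s-j+1})$ for $1\le j\le s$. These members are nested in each of their two factors separately, so every partial intersection over a subset of the cover is again a join of the form $\Delta_\alpha(I^a)\ast\Delta_\beta(J^b)$; feeding the cover into the Mayer--Vietoris spectral sequence (or peeling off one member of the union at a time) produces, degree by degree, the advertised collection of long exact sequences relating $\widetilde H_\bullet(\Delta_\gamma((I+J)^s))$ to the $\widetilde H_\bullet(\Delta_\alpha(I^i))$ and $\widetilde H_\bullet(\Delta_\beta(J^j))$, $1\le i,j\le s$, which translate under Takayama into the asserted sequences. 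For (4), \Cref{ordinaryfiberproduct} and \Cref{symbolicfiberproduct} say the nonempty faces of the relevant degree complex split as the disjoint union of the nonempty faces of $\Delta_\alpha(I^{s-|\beta|})$ and $\Delta_\beta(J^{s-|\alpha|})$ (or their symbolic versions), and those two complexes live on the disjoint vertex sets $\{1,\dots,m\}$ and $\{m+1,\dots,n\}$. Hence the geometric realization is literally a disjoint union of two spaces, so $\widetilde H_i$ is the direct sum of the two reduced homologies for $i\ge1$, with exactly one extra copy of $k$ in degree $0$ when both pieces are nonempty. In that case $G_\gamma=\emptyset$, so Takayama's degree shift is $p-1$ and the extra class surfaces in cohomological degree $p=1$; this is precisely the $+1$ appearing under condition $\diamond$ (resp.\ $\diamond'$). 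When $G_\alpha\ne\emptyset$, $G_\beta\ne\emptyset$, $|\alpha|\ge s$, or $|\beta|\ge s$, one of the two pieces is empty and the matching summand on the right drops out, recovering the remaining branches of the formula.

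The step I expect to be the main obstacle is this last bookkeeping in (4): one must check, against each condition in the definitions of $\mathcal A,\mathcal B$ and $\mathcal A',\mathcal B'$, that whenever a piece is declared empty the corresponding term $H^p_\mm(A/I^{s-|\beta|})_\alpha$ or $H^p_\nn(B/J^{s-|\alpha|})_\beta$ really does vanish for $p\ge1$ (for instance $J^{s-|\alpha|}=B$, so $B/J^{s-|\alpha|}=0$, once $|\alpha|\ge s$), and that the degenerate complexes---the void complex as opposed to the complex $\{\emptyset\}$---are treated with the correct reduced-homology conventions so that no spurious class in degree $-1$ or $0$ is created or destroyed. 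This is also where one must confirm that, after passing through the join and union operations, the multidegree still sits in the range where Takayama's formula is an equality. Part (3) carries a gentler version of the same issue: making the connecting maps and the indexing explicit so that the long exact sequences coming out of the iterated cover are stated cleanly.
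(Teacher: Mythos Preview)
Your plan matches the paper's proofs almost step for step: Takayama plus the join K\"unneth for (1); the Mayer--Vietoris collapse from the two cones $(\Delta_\alpha(I)\ast\Delta_Y)$ and $(\Delta_X\ast\Delta_\beta(J))$ to obtain $\widetilde H_p(\Delta_\gamma(IJ))\cong\widetilde H_{p-1}(\Delta_\gamma(I+J))$ for (2); the inductive ``peel off one member'' Mayer--Vietoris for (3), where the paper sets $\Delta_i=\bigcup_{j\ge i}\Delta_\alpha(I^j)\ast\Delta_\beta(J^{s-j+1})$ and uses the nesting to identify the intersection of $\Delta_\alpha(I^i)\ast\Delta_\beta(J^{s-i+1})$ with $\Delta_{i+1}$ as $\Delta_\alpha(I^i)\ast\Delta_\beta(J^{s-i})$; and the disjoint-union reduced-homology count for (4), with exactly the case analysis and degenerate-complex bookkeeping you anticipate.

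One point worth flagging: your K\"unneth formula $\widetilde H_n(\Lambda_1\ast\Lambda_2;k)\cong\bigoplus_{i+j=n-1}\widetilde H_i(\Lambda_1;k)\otimes_k\widetilde H_j(\Lambda_2;k)$ is the correct one, whereas the paper's lemma on joins is stated without the shift (as $i+j=n$), and that discrepancy propagates into the displayed indices in (1) and (2). If you carry your version through the arithmetic honestly you will obtain $u+v=p$ in (1) and $u+v=p-1$ in (2), not the stated $u+v=p+1$ and $u+v=p$; the example $I=(x_1)\subset k[x_1]$, $J=(x_2)\subset k[x_2]$ at $\gamma=0$ (where $H^0_{\mm+\nn}(k)_0=k$ must match $H^0_\mm(k)_0\cdot H^0_\nn(k)_0$) confirms your indexing, so your approach is sound and in fact proves the corrected formulas.
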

%	This allows us to recover the depth and regularity of the ordinary power of the fiber product, proved in \cite{NV19}, as well as produce the depth and regularity of the symbolic power of the fiber product, a new result.

As a further consequence of this description of the local cohomology, we have the following result about the regularity of symbolic powers of the fiber product.
	
\begin{theorem}
	(\Cref{depthandreg}) For $I \subset A$ and $J \subset B$ squarefree monomial ideals, $\mm$ and $\nn$ the respective maximal homogeneous ideals of $A$ and $B$, and $s \geq 1$, we have
$$\reg(S/(I+J+\mm\nn)^{(s)}) = \max_{1 \leq t \leq s} \{\reg(A/I^{(t)}) + s - t, \reg(B/I^{(t)}) + s - t, 2s - 1\}$$
\end{theorem}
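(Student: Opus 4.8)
The plan is to read the regularity off the $\ZZ^n$-graded local cohomology of $M:=S/(I+J+\mm\nn)^{(s)}$, using the identity
\[
\reg(M)=\max\bigl\{\,p+|\gamma|\ :\ H_{\mm+\nn}^p(M)_\gamma\neq 0\,\bigr\}
\]
(the coarsening to the standard grading of the usual local-cohomology description of regularity), together with the explicit formula of \Cref{cohomologyfiberproduct}(b). That result expresses $\dim_k H_{\mm+\nn}^p(M)_\gamma$, for $\gamma=(\alpha,\beta)$, as $\dim_k H_\mm^p(A/I^{(s-|\beta|)})_\alpha+\dim_k H_\nn^p(B/J^{(s-|\alpha|)})_\beta$, with an extra $+1$ precisely when $p=1$ and $\diamond'$ holds; moreover, by the degree-complex description in \Cref{symbolicfiberproduct}, the first summand vanishes unless $\beta\in\NN^{n-m}$ and the second unless $\alpha\in\NN^{m}$. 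Hence $H_{\mm+\nn}^p(M)_\gamma\neq 0$ precisely when at least one of the following holds: (i) $H_\mm^p(A/I^{(s-|\beta|)})_\alpha\neq 0$ and $\beta\ge 0$; (ii) $H_\nn^p(B/J^{(s-|\alpha|)})_\beta\neq 0$ and $\alpha\ge 0$; (iii) $p=1$ and $\diamond'$ holds. I will compute the supremum of $p+|\gamma|$ over each case; the maximum of the three will be the asserted value.

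For case (i), put $t:=s-|\beta|$. Since $\beta\ge 0$ and $H_\mm^p(A/I^{(t)})_\alpha=0$ once $A/I^{(t)}=0$, only $1\le t\le s$ matters, and every such $t$ does occur (choose any $\beta\ge 0$ with $|\beta|=s-t$ and an $\alpha$ attaining the regularity of $A/I^{(t)}$). For such $\gamma$ one has $p+|\gamma|=(p+|\alpha|)+(s-t)$, and over all $p,\alpha$ with $H_\mm^p(A/I^{(t)})_\alpha\neq 0$ the supremum of $p+|\alpha|$ is exactly $\reg(A/I^{(t)})$, by the same regularity characterization over $A$. Thus case (i) contributes $\max_{1\le t\le s}\{\reg(A/I^{(t)})+s-t\}$, and by symmetry case (ii) contributes $\max_{1\le t\le s}\{\reg(B/J^{(t)})+s-t\}$. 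In case (iii), $p=1$, and the defining conditions of $\diamond'$ force $\Delta_\alpha(I^{(s-|\beta|)})$ and $\Delta_\beta(J^{(s-|\alpha|)})$ to have a vertex; in particular $A/I^{(s-|\beta|)}$ and $B/J^{(s-|\alpha|)}$ are nonzero, so $s-|\beta|\ge 1$ and $s-|\alpha|\ge 1$, whence $|\alpha|,|\beta|\le s-1$ and $p+|\gamma|=1+|\alpha|+|\beta|\le 2s-1$.

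It remains to realize the value $2s-1$, and here I would exhibit a single bidegree achieving it. Using that $I$ and $J$ are squarefree with $I\subsetneq\mm$ and $J\subsetneq\nn$, choose variables $x_i\notin I$ (with $1\le i\le m$) and $x_j\notin J$ (with $m+1\le j\le n$), and set $\alpha:=(s-1)e_i$, $\beta:=(s-1)e_j$, so $|\alpha|=|\beta|=s-1$ and $s-|\beta|=s-|\alpha|=1$. Unwinding the definition of the degree complex, one checks $\{i\}\in\Delta_\alpha(I)=\Delta_\alpha(I^{(1)})$: for a minimal monomial generator $x^{\mathbf b}$ of $I$ one needs a coordinate $\ell\neq i$ with $b_\ell>\alpha_\ell$, and since $\alpha_\ell=0$ for $\ell\neq i$ this merely says the generator involves a variable other than $x_i$ — true, because $x_i$ is not itself a minimal generator. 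Likewise $\{j\}\in\Delta_\beta(J^{(1)})$. Hence $\diamond'$ holds at $\gamma=(\alpha,\beta)$ with $p=1$, so $H_{\mm+\nn}^1(M)_\gamma\neq 0$ and this $\gamma$ realizes $p+|\gamma|=2s-1$; assembling the three contributions gives the stated equality. The step I expect to be the crux is exactly this last existence argument: it is where the combinatorics of the degree complex and the hypothesis that $I$ and $J$ are proper squarefree ideals strictly inside $\mm$ and $\nn$ are used, and it is what keeps $2s-1$ from being subsumed by the other two terms. A secondary point needing care is that bidegrees $\gamma$ with negative coordinates cannot exceed the bounds above, which holds because — by \Cref{symbolicfiberproduct} — the first two summands of \Cref{cohomologyfiberproduct}(b) already vanish unless $\beta\ge 0$ (respectively $\alpha\ge 0$).
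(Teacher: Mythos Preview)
Your approach mirrors the paper's: partition the pairs $(p,\gamma)$ with $H_{\mm+\nn}^p(M)_\gamma\neq 0$ according to which piece of \Cref{cohomologyfiberproduct}(2) detects them, and maximize $p+|\gamma|$ over each piece. There is, however, a genuine gap. The formula in \Cref{cohomologyfiberproduct} is stated and proved only for $p\ge 1$, so your assertion that ``$H_{\mm+\nn}^p(M)_\gamma\neq 0$ precisely when (i), (ii), or (iii) holds'' is unjustified at $p=0$, and $H_{\mm+\nn}^0(M)_\gamma$ can indeed be nonzero (exactly when $\gamma\in\NN^n$ and $\Delta_\gamma((I+J+\mm\nn)^{(s)})=\{\emptyset\}$; this is \Cref{pIs0}). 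The paper handles this as a separate set $W_D$ and invokes \cite[Lemma~1.5]{HT16} to obtain $|\gamma|\le s-1$ for any such $\gamma$; since $s-1<2s-1$, the $p=0$ contributions are then dominated by your case~(iii) and do not affect the maximum. Without this step, the inequality $\reg(M)\le\max_{1\le t\le s}\{\reg(A/I^{(t)})+s-t,\reg(B/J^{(t)})+s-t,2s-1\}$ is not established.

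As a minor remark, your witness for the value $2s-1$ is actually slightly more careful than the paper's: you select a variable $x_i\notin I$ rather than defaulting to $x_1$, which matters if $x_1$ happens to be a generator of $I$.
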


%	 \begin{enumerate}
%\item $\depth(S/(I+J+\mm\nn)^s) = \min_{1 \leq t \leq s} \{\depth(A/I^t), \depth(B/J^t), 1\}$
%\item $\depth(S/(I+J+\mm\nn)^{(s)}) = \min_{1 \leq t \leq s} \{\depth(A/I^{(t)}), \depth(B/J^{(t)}), 1\}$
%\item $\reg(S/(I+J+\mm\nn)^s) = \max_{1 \leq t \leq s} \{\reg(A/I^t) + s - t, \reg(B/I^t) + s - t, 2s - 1\}$
%\item 
%\end{enumerate}
	
	The paper is outlined as follows. In Section 2, we collect the necessary background and notations to be used throughout the paper. In Section 3, we describe the decomposition of the degree complexes of $IJ$, $I+J$, $(I+J)^s$ and $(I+J)^{(s)}$ and describe
	what we can derive about their graded local cohomologies. In Section 4, we describe the decomposition of the degree complexes of $(I+J+\mm\nn)^s$ and $(I + J + \mm\nn)^{(s)}$ and describe their graded local cohomologies. In Section 5, we describe the decomposition of $\Delta_\gamma(I_1 J_2 + I_2 J_1)$ for $I_1 \subseteq I_2 \subseteq A$ and $J_1 \subseteq J_2 \subseteq B$. Section 6 contains some useful functions for computing degree complexes in the Macaulay2 computer algebra system.
	
	\textbf{Acknowledgements.} The author wishes to thank his advisor, Huy T\`ai H\`a, for suggesting the strategy for determining local cohomology and for providing direction for the paper. The author also wishes to thank Selvi Kara Beyarslan for carefully reading through the paper, and her many helpful comments, suggestions, and discussions. The author would also like to thank Hop D. Nguyen for pointing out some errors in the first version of this paper.

\iffalse

In the second section, we introduce the notation used throughout and give the necessary background in graph theory, simplicial complexes, degree complexes, and local cohomology modules.

In the third section, we give decompositions of the degree complexes in terms of smaller degree complexes for $I + J$ and $I \cap J$. Furthermore, when $I$ and $J$ are in different polynomial rings which share no variables, decompositions are given for degree complexes of $IJ$, $(I + J)^s$, and $(I + J)^{(s)}$. 

In the fourth section, for $I \subset k[x_1,\ldots,x_m]$, $J \subset k[x_{m+1},\ldots,x_n]$, $\mm = (x_1,\ldots,x_m)$ and $\nn = (x_{m+1},\ldots,x_n)$, we give decompositions of degree complexes of the form $(I + J + \mm\nn)^s$ and $(I + J + \mm\nn)^{(s)}$ in terms of degree complexes of (ordinary and symbolic, respectively) powers of $I$ and $J$.

In the fifth section, for $I \subset A$ and $J \subset B$ polynomial rings sharing no variables and admitting filtrations $A \supset I = I_0 \supseteq I_1 \supseteq \cdots$ and $B \supset J = J_0 \supseteq J_1 \supseteq \cdots$, the mixed product of $I$ and $J$ is given to be $H := I_a J_r + I_b J_s$. 

In the sixth section, we collect the functions I have written in order to work with degree complexes in Macaulay2.

\fi

%%%%%%%%%%%%%%%%%%%%%%%%%%%%%%%%%%%%%%%%%%%%%%%

\section{Preliminaries} \label{sec.prel}

In this section, we will introduce the necessary background and fix the notation to be used in the remainder of the paper.

Throughout this paper, let $k$ be a field of characteristic zero, and define sets of variables $X = \{x_1,\ldots,x_m\}$ and $Y = \{x_{m+1},\ldots,x_n\}$. Let $A = k[X] = k[x_1,\ldots,x_m]$, $B = k[Y] = k[x_{m+1},\ldots,x_n]$, and $S = A \otimes_k B = k[x_1,\ldots,x_n]$. For $\gamma \in \ZZ^n$, define $x^\gamma := x_1^{\gamma_1} x_2^{\gamma_2} \cdots x_n^{\gamma_n}$. 

\begin{definition} A \textit{simplicial complex} $\Delta$ is a collection of subsets $\sigma \subset [n]$, called \textit{faces}, that is closed under taking subsets; i.e., if $\sigma \in \Delta$ and $\sigma' \subseteq \sigma$, then $\sigma' \in \Delta$. The \textit{(simplicial) join} of two simplicial complexes $\Delta$ and $\Gamma$ is given by $$\Delta \ast \Gamma = \{\sigma \cup \tau : \sigma \in \Delta, \tau \in \Gamma\}.$$ A \textit{simplex} on a set $T \subset [n]$, denoted $\Delta_T$, has the full set $T$ as a face, and therefore $\tau \in \Delta_T$ for all $\tau \subset T$. The \textit{void complex} is the complex with no faces, and the \textit{irrelevant complex} is the complex whose only face is $\emptyset$. Observe that a simplicial complex joined with the void complex is again the void complex.  Note that $\tilde{H}_p(\emptyset;k) = 0$ for all $p$ and $\tilde{H}_p(\{\emptyset\};k) = 0$ for $p \neq -1$, but $\tilde{H}_p(\{\emptyset\};k) = k$ for $p = -1$.
\end{definition}

\begin{notation} Let $[n] := \{1,\ldots,n\}$. For $S = k[x_1,\ldots,x_n]$ and $F \subseteq [n]$, define $S_F$ to be $S[x_i^{-1} : i \in F]$. For $\gamma \in \ZZ^n$, denote by $G_\gamma := \{i \in [n]: \gamma_i < 0\}$.
\end{notation}

\begin{definition} \label{degcpx} For a monomial ideal $I \subset S$ and $\gamma \in \ZZ^n$, the \textit{degree complex} $\Delta_\gamma(I)$ is a simplicial complex defined as
$$\Delta_\gamma(I) = \{F \subseteq [n] \setminus G_\gamma : x^\gamma \not\in IS_{F \sqcup G_\gamma}\}.$$
\end{definition}

The reason for studying the degree complexes of a monomial ideal is a formula due to Takayama (originally \cite{Tak05}; the version used here can be seen in \cite{MT19}).

\begin{theorem} \label{takayama} Let $I \subset S$ be a monomial ideal.
$$\dim_k H_{\mathfrak{m}}^i(R/I)_\gamma = \begin{cases} \dim_k \tilde{H}_{i - |G_\gamma| - 1}(\Delta_\gamma(I);k), & \text{if } G_\gamma \in \Delta_\textbf{0}(I) \\ 0, & \text{otherwise.} \end{cases}$$
\end{theorem}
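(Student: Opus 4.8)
This is Takayama's theorem, so I would prove it through the \v{C}ech complex together with the $\mathbb{Z}^n$-grading. Let $\check C^\bullet$ denote the \v{C}ech complex of $S/I$ on the variables $x_1,\dots,x_n$, so that $\check C^p=\bigoplus_{|F|=p,\,F\subseteq[n]}(S/I)_{x^F}$ with the usual alternating localization maps; it is $\mathbb{Z}^n$-graded and computes $H^\bullet_{\mathfrak m}(S/I)$, hence $H^i_{\mathfrak m}(S/I)_\gamma=H^i(\check C^\bullet_\gamma)$. Because $(S/I)_{x^F}=S_F/IS_F$ in the notation introduced in \Cref{sec.prel}, and because a Laurent monomial $x^\gamma$ lies in $S_F$ precisely when $G_\gamma\subseteq F$, the multidegree-$\gamma$ strand $\bigl((S/I)_{x^F}\bigr)_\gamma$ is the one-dimensional space $k\cdot x^\gamma$ when $G_\gamma\subseteq F$ and $x^\gamma\notin IS_F$, and is $0$ otherwise.

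Next I would re-index the surviving summands. Each $F\supseteq G_\gamma$ is uniquely $F=F'\sqcup G_\gamma$ with $F'\subseteq[n]\setminus G_\gamma$, and the condition $x^\gamma\notin IS_{F'\sqcup G_\gamma}$ is exactly $F'\in\Delta_\gamma(I)$ by \Cref{degcpx}. Thus $\check C^p_\gamma\cong\bigoplus_{F'\in\Delta_\gamma(I),\ |F'|=p-|G_\gamma|}k$, which is the reduced simplicial cochain group $\widetilde C^{\,p-|G_\gamma|-1}(\Delta_\gamma(I);k)$ on the vertex set $[n]\setminus G_\gamma$. One then checks that the \v{C}ech differential, which on each surviving $k$-summand is $\pm$ the identity $x^\gamma\mapsto x^\gamma$, matches the simplicial coboundary of $\Delta_\gamma(I)$ under this identification; the only point requiring care is a routine sign reconciliation coming from the fixed block $G_\gamma$ sitting inside every face. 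Granting this, $\check C^\bullet_\gamma$ is, up to a shift by $|G_\gamma|+1$, the reduced cochain complex of $\Delta_\gamma(I)$, so $H^i(\check C^\bullet_\gamma)\cong\widetilde H^{\,i-|G_\gamma|-1}(\Delta_\gamma(I);k)$; and since $k$ is a field, $\dim_k\widetilde H^{\,q}(\Delta_\gamma(I);k)=\dim_k\widetilde H_q(\Delta_\gamma(I);k)$. This yields the first branch of the formula.

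For the second branch, suppose $G_\gamma\notin\Delta_{\textbf{0}}(I)$, i.e.\ $1\in IS_{G_\gamma}$, so $IS_{G_\gamma}=S_{G_\gamma}$. Localizing further gives $IS_F=S_F$ for every $F\supseteq G_\gamma$, hence every term of $\check C^\bullet_\gamma$ vanishes and $H^i_{\mathfrak m}(S/I)_\gamma=0$ for all $i$ (equivalently, $\Delta_\gamma(I)$ is the void complex). I expect the main obstacle to be precisely the bookkeeping in the second paragraph: pinning down the degree shift and the signs, and in particular handling the corner cases where $\Delta_\gamma(I)$ is $\{\emptyset\}$ (so that $\widetilde H_{-1}=k$ appears in \v{C}ech degree $|G_\gamma|$) or is void — these are exactly the degeneracies that the two-branch statement is designed to absorb. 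See \cite{Tak05} and \cite{MT19} for the original and the streamlined accounts.
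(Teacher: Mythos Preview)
The paper does not prove this theorem at all: it is quoted as background, attributed to Takayama \cite{Tak05} with the streamlined version in \cite{MT19}, and used thereafter as a black box. Your \v{C}ech-complex argument is the standard proof and is correct; it is essentially what appears in those references, so there is nothing to compare against in the paper itself.
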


In particular, because depth and Castelnuovo-Mumford regularity can be defined based on when the cohomology modules $H_{\mathfrak{m}}^i(R/I)$ are nonzero, a full description of the degree complex can help determine these values.

\begin{definition} Let $M$ be a module with support $\mm = (x_1,\ldots,x_n)$. Then \begin{itemize}
\item $\text{depth}(M) = \min\{p : H_\mm^p(M) \neq 0\} = \min\{p: H_\mm^p(M)_\gamma \neq 0 \text{ for some } \gamma \in \ZZ^n\}$
%\item $a_i(M) = \max\{|\gamma| : H_\mm^i(M)_\gamma \neq 0\}$
\item $\text{reg}(M) = \max\{p + |\gamma| : p = 0,\ldots,\dim M, H_\mm^p(M)_\gamma \neq 0\}$.
\end{itemize}
\end{definition}

\begin{observation} \label{subsets} These are some useful facts about the degree complex.
\begin{enumerate}
\item If $I_1 \subseteq I_2$, then $\Delta_\gamma(I_1) \supseteq \Delta_\gamma(I_2)$.
\item For any $\gamma \in \ZZ^n$, $\Delta_\gamma(1) = \{\}$.
\item $F \in \Delta_\ob(I)$ if and only if for all monomials $m \in I, \supp(m) \not\subseteq F$
\item For any nontrivial monomial ideal $I \subset S$, $\Delta_\ob(I) = \Delta_\ob(\sqrt{I})$.
\end{enumerate}
\end{observation}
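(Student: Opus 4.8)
The plan is to read each of the four assertions straight off \Cref{degcpx}, using the single elementary fact that a Laurent monomial of $S_F = S[x_i^{-1} : i \in F]$ is a unit precisely when its support is contained in $F$, and hence that a monomial ideal of $S_F$ equals the whole ring exactly when one of its monomial generators has support inside $F$.

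For (1): fix $\gamma$ and a face candidate $F \subseteq [n] \setminus G_\gamma$. Since localization is exact, $I_1 \subseteq I_2$ gives $I_1 S_{F \sqcup G_\gamma} \subseteq I_2 S_{F \sqcup G_\gamma}$, so $x^\gamma \notin I_2 S_{F \sqcup G_\gamma}$ implies $x^\gamma \notin I_1 S_{F \sqcup G_\gamma}$; that is, $\Delta_\gamma(I_2) \subseteq \Delta_\gamma(I_1)$ (the two complexes share the vertex set $[n] \setminus G_\gamma$, so there is nothing else to check). For (2): the unit ideal localizes to $S_{F \sqcup G_\gamma}$ itself, which contains $x^\gamma$ for every $F$, so no $F$ satisfies the defining condition and $\Delta_\gamma(1)$ is void.

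For (3): set $\gamma = \ob$, so $G_\gamma = \emptyset$ and $x^\gamma = 1$; then $F \in \Delta_\ob(I)$ says $1 \notin I S_F$, i.e. $I S_F \neq S_F$. Writing $I = (m_1, \dots, m_r)$ with monomial generators, $I S_F = S_F$ holds iff some $m_i$ is a unit in $S_F$, iff $\supp(m_i) \subseteq F$ for some $i$; since every monomial of $I$ is a multiple of some $m_i$ and conversely each $m_i$ is a monomial of $I$, this is equivalent to $\supp(m) \subseteq F$ for some monomial $m \in I$. Negating yields the stated equivalence. Finally (4) follows from (1) and (3): because $\sqrt I$ is again a monomial ideal and $I \subseteq \sqrt I$, part (1) gives $\Delta_\ob(\sqrt I) \subseteq \Delta_\ob(I)$; conversely, if $F \notin \Delta_\ob(\sqrt I)$ then by (3) some monomial $m \in \sqrt I$ has $\supp(m) \subseteq F$, and choosing $t \geq 1$ with $m^t \in I$ produces a monomial of $I$, namely $m^t$, with $\supp(m^t) = \supp(m) \subseteq F$, so $F \notin \Delta_\ob(I)$.

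There is no real obstacle here; the statements are essentially bookkeeping with \Cref{degcpx}. The only points warranting a second look are the matching of vertex sets when comparing two degree complexes in (1) and (2), and, in (4), the fact that passing to the radical does not change supports of monomials — handled above by the identity $\supp(m^t) = \supp(m)$ rather than by invoking an explicit generator description of $\sqrt I$. The nontriviality hypothesis in (4) is harmless: the equality also holds in the degenerate cases $I = 0$ and $I = S$, since $S$ is reduced.
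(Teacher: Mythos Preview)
Your proof is correct and follows essentially the same route as the paper: parts (1)--(3) are argued identically from the definition, and for (4) both arguments reduce to the support criterion in (3). The only cosmetic difference is that in (4) the paper passes from $I$ to $\sqrt I$ via $m \mapsto m_{\mathrm{sqfree}} := \prod_{i \in \supp(m)} x_i$, whereas you pass from $\sqrt I$ back to $I$ via $m \mapsto m^t$ (and invoke (1) for the other inclusion); since $\supp(m_{\mathrm{sqfree}}) = \supp(m) = \supp(m^t)$, the two are interchangeable.
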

\begin{proof}
\begin{enumerate}
\item For $F \in [n] \setminus G_\gamma$,
\begin{equation} \notag
\begin{aligned}
F \in \Delta_\gamma(I_2) & \Longleftrightarrow & & x^\gamma \not\in I_2 S_{F \sqcup G_\gamma} \\
& \Longrightarrow & & x^\gamma \not\in I_1 S_{F \sqcup G_\gamma} \subseteq I_2 S_{F \sqcup G_\gamma} \\
& \Longleftrightarrow & & F \in \Delta_\gamma(I_1)
\end{aligned}
\end{equation}
\item For $F \subseteq [n] \setminus G_\gamma$, $F \in \Delta_\gamma(1)$ iff $x^\gamma \not\in S_{F \sqcup G_\gamma}$. However, $$x^\gamma = \prod_{i \not\in G_\gamma} x_i^{\gamma_i} \cdot \prod_{i \in G_\gamma} x_i^{\gamma_i} \in S \cdot S_{G_\gamma} \subseteq S_{F \sqcup G_\gamma}.$$
Therefore, for any $F \subseteq [n] \setminus G_\gamma$, $F \not\in \Delta_\gamma(1)$, so $\Delta_\gamma(1)$ is the void complex.
\item If $\supp(m) \subseteq F$ for some $m \in I$, then $m^{-1} \in S_F$, and $1 = m \cdot m^{-1} \in I S_F$, so that $F \in \Delta_\ob(I)$. If $\supp(m) \not\subseteq F$ for each $m \in I$, then because $1 \not\in I$ and $m^{-1} \not\in S_F$ for any $m \in I$, we have $1 \not\in IS_F$, so that $F \in \Delta_\ob(I)$.
\item For each $m \in I$, the squarefree monomial $m_{sqfree} := \prod_{i \in \supp(m)} x_i \in \sqrt{I}$, and $\supp(m) = \supp(m_{sqfree})$. We also have $1 \in IS_F$ if and only if, for some $0 \neq m \in I$, $\supp(m) \subseteq F$. Therefore, we have
\begin{equation} \notag
\begin{aligned}
F \in \Delta_\ob(I) & \Longleftrightarrow & & 1 \in IS_F \\
& \Longleftrightarrow & & \text{for some } 0 \neq m \in I, \hspace{1mm} \supp(m) \subseteq F \\
& \Longleftrightarrow & & \text{for some } 0 \neq m_{sqfree} \in \sqrt{I}, \hspace{1mm} \supp(m_{sqfree}) = \supp(m) \subseteq F \\
& \Longleftrightarrow & & 1 \in \sqrt{I}S_F \\
& \Longleftrightarrow & & F \in \Delta_\ob(\sqrt{I}).
\end{aligned}
\end{equation}
\end{enumerate}
\end{proof}

\iffalse
A special class of monomial ideals are \textit{edge ideals}, which are squarefree monomial ideals generated in degree two.

\begin{definition} A graph $G$ is a set of vertices $V(G)$ together with a set of edges $E(G)$, where an edge is a pair of distinct vertices.
\begin{itemize}
\item For a graph $G$, the edge ideal $I$ is given by $$I = \langle x_i x_j : \{x_i,x_j\} \in E(G) \rangle \subset k[V(G)],$$ where, by abuse of notation, $V(G) = \{x_1,\ldots,x_n\}$ is the set of vertices of $G$ and the set of variables of the polynomial ring.
\item For a graph $G = (V(G),E(G))$, a set $C \subseteq V(G)$ is called a \textit{vertex cover} of $G$ if $C \cap e \neq \emptyset$ for all $e \in E(G)$. $C$ is called a \textit{minimal vertex cover} (or m.v.c.) of $G$ if it is a vertex cover and no proper subset of $C$ is a vertex cover.
\item {\textcolor{red}{(This could probably use a reference.)}} The associated primes of an edge ideal $I$ are precisely its minimal vertex covers.
\end{itemize}
\end{definition}
Besides ordinary powers of monomial ideals, we are also interested in their symbolic powers.

\begin{observation} If $I$ is the edge ideal of a graph $G$, the $s$th symbolic power can also be interpreted as $$I^{(s)} = \bigcap_{\substack{C \text{ is a } \\ \text{m.v.c. of G}}} \langle x | x \in C\rangle^s.$$
\end{observation}
\fi

Once the decomposition of the degree complex is given, many of the results for ordinary powers immediately yield results for \textit{symbolic powers} via identical proofs.

\begin{definition} (\cite{CEHH17}, \cite{Vil01}) Let $I \subset S$ be a homogeneous ideal. The $s$th symbolic power of $I$, denoted by $I^{(s)}$, is given by $$I^{(s)} = S \cap \bigcap_{\pp \in \text{Ass}(I)} \pp^s S_\pp,$$ where $\text{Ass}(I)$ denotes the set of associated primes of $I$. If $I$ is a squarefree monomial ideal, this is simply $$I^{(s)} = \bigcap_{\pp \in \text{Ass}(I)} \pp^s.$$
\end{definition}

%%%%%%%%%%%%%%%%%%%%%%%%%%%%%%%%%%%%%%%%%%%%%%%

\section{Degree Complexes of Powers of Sums of Ideals} \label{sec.powersum}

In this section, we will describe how operations on ideals translate to operations on degree complexes, and we will provide a decomposition of the degree complexes of $I + J$ and $I \cap J$. Furthermore, when $I \subset A$ and $J \subset B$, we provide a decomposition of the degree complexes of $IJ$, $(I+J)^s$, and $(I+J)^{(s)}$.

\begin{notation} For a monomial ideal $I \subset S$, define $\supp(I) := \bigcup_{m \in \GS(I)} \supp(m)$, where $\GS(I)$ is the set of minimal generators of $I$. Let $k[F] := k[x_i : i \in F]$.
\end{notation}

First, we prove some lemmas relating to membership in the ideal $IS_F$ in order to simplify calculations when we are considering faces of the degree complex.

The following lemma indicates that we may change $\gamma_i$ for $i \in F$ without affecting whether the face $F$ is included.

\begin{lemma} \label{freeexponents} Let $I \subset S$ be a monomial ideal, $F \subseteq [n]$, $\gamma \in \ZZ^n$, and $x^\delta \in k[F]$. Then $x^\gamma \in IS_F$ if and only if $x^\delta x^\gamma \in IS_F$.
\end{lemma}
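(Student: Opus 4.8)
The plan is to prove the equivalence by exploiting the fact that localizing at the variables in $F$ makes every $x_i$ with $i \in F$ a unit, so multiplying by any monomial supported on $F$ is an invertible operation in $S_F$. First I would observe that $x^\delta \in k[F]$ means $\supp(x^\delta) \subseteq F$, hence $x^\delta$ is a unit in $S_F$ (its inverse $x^{-\delta}$ also lies in $S_F$). Since $IS_F$ is an ideal of the ring $S_F$, multiplication by a unit of $S_F$ preserves membership in $IS_F$: that is, for any $f \in S_F$, we have $f \in IS_F$ if and only if $uf \in IS_F$ for a unit $u \in S_F$.

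Concretely, I would argue the forward direction: if $x^\gamma \in IS_F$, then since $x^\delta \in S_F$ and $IS_F$ is closed under multiplication by elements of $S_F$, we get $x^\delta x^\gamma \in IS_F$. For the converse, if $x^\delta x^\gamma \in IS_F$, then multiply by $x^{-\delta} \in S_F$ (valid since $\supp(\delta) \subseteq F$), which gives $x^{-\delta} \cdot x^\delta x^\gamma = x^\gamma \in IS_F$. This is essentially a one-line argument once the setup is clear, so I would keep it short and emphasize only the two facts being used: $\supp(x^\delta) \subseteq F \Rightarrow x^{\pm\delta} \in S_F$, and $IS_F$ is an $S_F$-submodule.

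I do not anticipate a genuine obstacle here — the lemma is a routine localization observation. The only point requiring a modicum of care is making sure $x^\delta$ is genuinely a unit in $S_F$ and not merely in some larger localization; this is immediate from the hypothesis $x^\delta \in k[F]$, which forces the exponent vector $\delta$ to be supported (with nonnegative, but in fact the sign is irrelevant after localization) inside $F$. One might also remark that the statement holds for arbitrary $\delta$ with $\supp(\delta) \subseteq F$, not just those coming from monomials in $k[F]$, but stating it as in the lemma suffices for the applications to degree complexes, where $F$-coordinates of $\gamma$ are precisely the ones that can be freely adjusted.
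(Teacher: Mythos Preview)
Your proposal is correct and matches the paper's proof essentially line for line: the forward direction is immediate since $IS_F$ is an ideal, and the converse multiplies by $x^{-\delta} \in S_F$ to recover $x^\gamma \in IS_F$. The paper's argument is exactly this, stated in two sentences.
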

\begin{proof} Let $x^\delta \in k[F]$. Clearly, if $x^\gamma \in IS_F$, then $x^\delta x^\gamma \in IS_F$. \\
If $x^\delta x^\gamma \in IS_F$, then because $x^{-\delta} \in S_F$, we have $x^\gamma = x^\delta x^\gamma \cdot x^{-\delta} \in IS_F.$
\end{proof}

The following lemma reduces the question of whether $x^\gamma \in IS_F$ to whether $x^\delta x^\gamma \in I$ for some $x^\delta \in k[F]$.

\begin{lemma} \label{facelemma} For a monomial ideal $I \subset S$ and a subset $F \subseteq [n]$, the monomial  $x^\gamma \in IS_F$ if and only if there exists some monomial $x^\delta \in k[F]$ such that $x^\delta x^\gamma \in I$.
\end{lemma}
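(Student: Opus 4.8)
\textbf{Proof plan for \Cref{facelemma}.}

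The plan is to prove both implications directly from the definition of the localization $S_F = S[x_i^{-1} : i \in F]$ together with the monomial structure of $I$. For the backward direction, suppose $x^\delta \in k[F]$ with $x^\delta x^\gamma \in I$. Since $x^\delta$ is a monomial in the variables $\{x_i : i \in F\}$, it is a unit in $S_F$, so $x^{-\delta} \in S_F$; hence $x^\gamma = (x^\delta x^\gamma) \cdot x^{-\delta} \in I S_F$. This is immediate and mirrors the argument already used in \Cref{freeexponents}.

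The forward direction is the substantive one. Assume $x^\gamma \in IS_F$. A general element of $IS_F$ is a finite $S_F$-linear combination $\sum_j \frac{g_j}{x^{\mu_j}} f_j$ with $f_j \in I$, $g_j \in S$, and $x^{\mu_j} \in k[F]$ a monomial. Clearing denominators, there is a single monomial $x^\mu \in k[F]$ (take $\mu$ coordinatewise large enough) so that $x^\mu x^\gamma \in I$ as an element of $S$ — here one uses that $I$ is a monomial ideal, so membership is detected monomial-by-monomial and the standard fact that $(IS_F) \cap S$ is the saturation $\bigcup_{t} (I : (\prod_{i\in F} x_i)^t)$. Concretely: $x^\mu x^\gamma \in I$ means some minimal generator $x^a$ of $I$ divides $x^\mu x^\gamma$. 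Now I would split the exponent vector $\mu + \gamma$ according to whether coordinates lie in $F$ or not, and observe that on the coordinates \emph{outside} $F$, $x^\mu$ contributes nothing, so $x^a$ must already divide $x^\gamma$ on those coordinates. This lets me replace $x^\mu$ by a possibly smaller monomial $x^\delta \in k[F]$ that still satisfies $x^\delta x^\gamma \in I$, and we are done — in fact any $x^\mu$ produced by clearing denominators already works, so one does not even need to optimize; I would just take $x^\delta = x^\mu$.

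The step I expect to require the most care is making rigorous the passage from ``$x^\gamma$ is an $S_F$-combination of elements of $I$'' to ``$x^\mu x^\gamma \in I$ for a single monomial $x^\mu \in k[F]$.'' The cleanest route is to invoke that for a monomial ideal, $IS_F \cap S = \bigl(I : \bigl(\textstyle\prod_{i \in F} x_i\bigr)^\infty\bigr)$, i.e.\ the set of $f \in S$ with $\bigl(\prod_{i\in F} x_i\bigr)^t f \in I$ for some $t \gg 0$; this is standard and could be cited or proved in one line by clearing denominators. Applying it with $f = x^\gamma$ gives $\bigl(\prod_{i \in F} x_i\bigr)^t x^\gamma \in I$, and $x^\delta := \bigl(\prod_{i \in F} x_i\bigr)^t$ lies in $k[F]$, which is exactly what is claimed. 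I would present the argument in this order: (1) backward direction via units in $S_F$; (2) recall/derive the saturation description of $IS_F \cap S$; (3) apply it to $x^\gamma$ to extract $x^\delta$.
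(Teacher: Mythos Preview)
Your proposal is correct and matches the paper's approach: the backward direction is identical (use that $x^\delta$ is a unit in $S_F$), and for the forward direction the paper simply writes $x^\gamma = m\, x^{-\delta}$ with $m \in I$ a monomial and $x^\delta \in k[F]$, then observes $x^\delta x^\gamma = m \in I$. The paper does not spell out the saturation/clearing-denominators justification you outline in step~(2); it asserts the monomial representation in one line, so your version is more careful but not a different route.
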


\begin{proof} If there exists $x^\delta \in k[F]$ such that $x^\delta x^\gamma \in I$, then $x^\gamma = x^\delta x^\gamma \cdot x^{-\delta} \in I \cdot S_F$. On the other hand, if $x^\gamma \in IS_F$, then we may write $x^\gamma = mx^{-\delta}$, where $m \in I$ is a monomial and $x^\delta \in k[F]$ (so that $x^{-\delta} \in S_F$). In this case, $x^\delta x^\gamma = x^\delta mx^{-\delta} = m \in I$.
\end{proof}

The following lemma allows us to reduce to the case where $\gamma \in \NN^n$ for the above lemma.

\begin{lemma} \label{removenegs} Let $I \subset S$ be a monomial ideal, $\gamma \in \ZZ^n$, and let $\gamma' \in \NN^n$ be defined by $$\gamma_i' := \begin{cases} \gamma_i, & \text{if } \gamma_i \geq 0 \\ 0, & \text{if } \gamma_i < 0 \end{cases}.$$ Then for any $F \subseteq [n] \setminus G_\gamma$, $x^\gamma \in IS_{F \sqcup G_\gamma}$ iff $x^{\gamma'} \in IS_{F \sqcup G_\gamma}$.
\end{lemma}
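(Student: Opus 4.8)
The plan is to prove Lemma~\ref{removenegs} by reducing the statement for a general $\gamma \in \ZZ^n$ to the positive-exponent setting, using \Cref{freeexponents} applied to the localization $S_{F \sqcup G_\gamma}$. First I would observe that $\gamma$ and $\gamma'$ differ only in the coordinates indexed by $G_\gamma$: for $i \in G_\gamma$ we have $\gamma_i < 0 = \gamma_i'$, and for $i \notin G_\gamma$ we have $\gamma_i = \gamma_i'$. Thus if we set $\delta := \gamma' - \gamma$, then $\delta_i = -\gamma_i > 0$ for $i \in G_\gamma$ and $\delta_i = 0$ otherwise, so $\delta \in \NN^n$ and in fact $x^\delta \in k[G_\gamma] \subseteq k[F \sqcup G_\gamma]$. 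Moreover $x^{\gamma'} = x^\delta x^\gamma$.

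Next I would apply \Cref{freeexponents} with the face taken to be $F \sqcup G_\gamma$ (the lemma is stated for an arbitrary subset of $[n]$, so this is legitimate) and with the monomial $x^\delta \in k[F \sqcup G_\gamma]$. That lemma gives exactly the equivalence $x^\gamma \in I S_{F \sqcup G_\gamma}$ if and only if $x^\delta x^\gamma \in I S_{F \sqcup G_\gamma}$, i.e. $x^\gamma \in I S_{F \sqcup G_\gamma}$ if and only if $x^{\gamma'} \in I S_{F \sqcup G_\gamma}$, which is the claim. It would also be worth noting $\gamma' \in \NN^n$ and $G_{\gamma'} = \emptyset$, so that $[n] \setminus G_{\gamma'} = [n]$ and the expressions on the right-hand side make sense; but this is automatic from the definition of $\gamma'$.

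I do not anticipate a serious obstacle here: the content of the lemma is essentially a bookkeeping observation that the negative coordinates of $\gamma$ live in $G_\gamma$, which is precisely one of the variable sets being inverted in $S_{F \sqcup G_\gamma}$, so clearing those denominators costs nothing. The only point requiring a little care is making sure the monomial $x^\delta$ used to invoke \Cref{freeexponents} genuinely lies in $k[F \sqcup G_\gamma]$ — this is where the definition of $\gamma'$ (zeroing out exactly the negative entries, all of which are indexed by $G_\gamma$) is used. If one prefers not to cite \Cref{freeexponents} at a non-simplicial face, one can instead argue directly: $x^{-\delta} = x^{\gamma} x^{-\gamma'}$ is a unit in $S_{G_\gamma}$ hence in $S_{F \sqcup G_\gamma}$, so $x^\gamma = x^{\gamma'} x^{-\delta}$ shows each of $x^\gamma, x^{\gamma'}$ is obtained from the other by multiplying by a unit of $S_{F \sqcup G_\gamma}$, whence they generate the same element modulo $I$ after localization. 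Either way the proof is a single short paragraph.
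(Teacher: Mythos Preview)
Your proposal is correct and matches the paper's proof essentially verbatim: the paper also writes $x^{\gamma'} = \bigl(\prod_{i \in G_\gamma} x_i^{-\gamma_i}\bigr) x^\gamma$ and invokes \Cref{freeexponents} with the face $F \sqcup G_\gamma$ to conclude. Your extra remarks about $\delta \in k[G_\gamma]$ and the alternative unit argument are sound but not needed.
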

\begin{proof} 
\begin{equation} \notag
\begin{aligned}
x^{\gamma'} \in IS_{F \sqcup G_\gamma} & \Longleftrightarrow & & x^{\gamma'} = (\prod_{i \in G_\gamma} x_i^{-\gamma_i}) x^{\gamma} \in IS_{F \sqcup G_\gamma} \\
& \Longleftrightarrow & & x^\gamma \in IS_{F \sqcup G_\gamma} & \text{(Lemma \ref{freeexponents})}
\end{aligned}
\end{equation}
\end{proof}

The following lemma allows us to perform addition in the ring $S_F$.

\begin{lemma} \label{additionsplitlemma} Let $I$ and $J$ be monomial ideals in $S$, and let $\gamma \in \ZZ^n$. Then for any $F \subset [n]$, $x^\gamma \in (I + J)S_F$ if and only if $x^\gamma \in IS_F + JS_F$.
\end{lemma}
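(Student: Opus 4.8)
The plan is to prove both inclusions directly by chasing monomials, using Lemma \ref{facelemma} to translate membership in a localized ideal into a statement about an honest monomial lying in the unlocalized ideal. Since $(I+J)S_F \supseteq IS_F$ and $(I+J)S_F \supseteq JS_F$, the inclusion $(I+J)S_F \supseteq IS_F + JS_F$ is immediate (the left side is an ideal of $S_F$ containing both $IS_F$ and $JS_F$, hence contains their sum). So the only content is the reverse inclusion $x^\gamma \in (I+J)S_F \Rightarrow x^\gamma \in IS_F + JS_F$.

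First I would reduce to $\gamma \in \NN^n$: writing $G_\gamma$ for the negative support of $\gamma$, note $F$ here plays the role of $F \sqcup G_\gamma$ in Lemma \ref{removenegs} only if $G_\gamma \subseteq F$; more simply, since all of $I$, $J$, $I+J$ are monomial ideals and the localization is at variables in $F$, I can invoke Lemma \ref{freeexponents} to replace $x^\gamma$ by $x^{\gamma'}$ with $\gamma' \in \NN^n$ simultaneously for all three ideals, provided the indices where I changed the exponent lie in $F$ — which they do, since those are exactly the indices in $G_\gamma$, and one only gets $x^\gamma \in (\cdot)S_F$ with negative entries contributing if those entries are invertible, i.e.\ in $F$. (If some negative entry of $\gamma$ is not in $F$, then $x^\gamma$ is not in any of the three localized ideals and the equivalence holds trivially.) After this reduction, $x^\gamma$ is a genuine monomial in $S$.

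Now apply Lemma \ref{facelemma} to the ideal $I+J$: $x^\gamma \in (I+J)S_F$ iff there is a monomial $x^\delta \in k[F]$ with $x^\delta x^\gamma \in I+J$. But $I+J$ is a monomial ideal, so its monomials are precisely the monomials lying in $I$ together with those lying in $J$ (a monomial belongs to a sum of monomial ideals iff it is divisible by a minimal generator of one of the summands). Hence $x^\delta x^\gamma \in I$ or $x^\delta x^\gamma \in J$. In the first case, Lemma \ref{facelemma} (applied to $I$) gives $x^\gamma \in IS_F$; in the second, $x^\gamma \in JS_F$. Either way $x^\gamma \in IS_F + JS_F$, completing the reverse inclusion.

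The only mild subtlety — and the step I would be most careful about — is the observation that a monomial lies in $I+J$ iff it lies in $I$ or in $J$. This is where "monomial ideal" is used essentially: a general element of $(I+J)$ is a sum $f+g$ with $f\in I$, $g\in J$, but a monomial $x^\delta x^\gamma$ equal to such a sum need not a priori be a monomial of $I$ or of $J$ unless we use that $I$ and $J$ have monomial generating sets, so that $I+J$ is generated by monomials and membership of a monomial is detected by divisibility. I would state this as a one-line remark (or cite the standard fact that sums of monomial ideals are monomial with generating set the union). With that in hand, the argument above is complete.
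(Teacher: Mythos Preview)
Your proof is correct and follows essentially the same route as the paper: apply Lemma~\ref{facelemma} to translate $x^\gamma \in (I+J)S_F$ into the existence of $x^\delta \in k[F]$ with $x^\delta x^\gamma \in I+J$, then use that a monomial in a sum of monomial ideals lies in one of the summands, and apply Lemma~\ref{facelemma} again. The paper's proof is exactly this chain of equivalences; you are simply more explicit about the reduction to $\gamma \in \NN^n$ (which the paper silently folds into its use of Lemma~\ref{facelemma}) and about why a monomial in $I+J$ must lie in $I$ or in $J$.
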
 

\begin{proof} Let $I$ and $J$ be monomial ideals, $\gamma \in \ZZ^n$, and $F \subseteq [n]$.

\begin{equation} \notag
\begin{aligned}
x^\gamma \in (I+J)S_F & \Longleftrightarrow & & \text{there exists } x^\delta \in k[F] \text{ s.t. } x^\delta x^\gamma \in (I + J) \text{ (Lemma \ref{facelemma})} \\
& \Longleftrightarrow & & \text{there exists } x^\delta \in k[F] \text{ such that } x^\delta x^\gamma \in I \text{ or } x^\delta x^\gamma \in J \\
& \Longleftrightarrow & & x^\gamma \in IS_F \text { or } x^\gamma \in JS_F \\
& \Longleftrightarrow & & x^\gamma \in IS_F + JS_F
\end{aligned}
\end{equation}
\end{proof}

The following lemma is straightforward, but we include it for clarity. It allows us to reduce the question of whether $x^\gamma \in I$ to the question of whether $\prod_{i \in T} x_i^{\gamma_i} \in I$, for $T \supseteq \supp(I)$.

\begin{lemma} \label{reductionlemma1} Let $I \subset S$ be a monomial ideal, $\supp(I) \subseteq T \subseteq [n]$ and $\gamma \in \NN^n$. Then $x^\gamma \in I$ if and only if $\prod_{i \in T} x_i^{\gamma_i} \in I$.
\end{lemma}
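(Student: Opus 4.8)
The final statement to prove is Lemma~\ref{reductionlemma1}: for a monomial ideal $I \subset S$, a subset $T$ with $\supp(I) \subseteq T \subseteq [n]$, and $\gamma \in \NN^n$, we have $x^\gamma \in I$ if and only if $\prod_{i \in T} x_i^{\gamma_i} \in I$.

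\textbf{Proof plan.}
The plan is to argue both implications by reducing membership of a monomial in $I$ to divisibility by a minimal generator, using the fact that $I$ is monomial. Write $\mu := \prod_{i \in T} x_i^{\gamma_i}$, so $\mu$ divides $x^\gamma$ (since $\gamma \in \NN^n$ means all exponents are nonnegative, and we are simply dropping the factors $x_i^{\gamma_i}$ for $i \notin T$). The backward direction is then immediate: if $\mu \in I$, then $x^\gamma = \mu \cdot \prod_{i \notin T} x_i^{\gamma_i} \in I$ because $I$ is an ideal.

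For the forward direction, suppose $x^\gamma \in I$. Since $I$ is a monomial ideal, $x^\gamma$ is divisible by some minimal generator $x^{\mathbf{c}} \in \GS(I)$; that is, $c_i \le \gamma_i$ for all $i$. By definition of $\supp(I)$, the support of $x^{\mathbf{c}}$ is contained in $\supp(I) \subseteq T$, so $c_i = 0$ for every $i \notin T$. Hence for $i \in T$ we have $c_i \le \gamma_i$, and for $i \notin T$ we have $c_i = 0 \le \gamma_i$ trivially, so $x^{\mathbf{c}}$ divides $\mu = \prod_{i \in T} x_i^{\gamma_i}$ as well. Therefore $\mu \in I$, completing the equivalence.

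\textbf{Main obstacle.} There is essentially no obstacle here; the only point requiring care is invoking the standard fact that a monomial lies in a monomial ideal precisely when it is divisible by one of the minimal generators, together with the observation that every minimal generator of $I$ is supported on $\supp(I) \subseteq T$. Once that is noted, the divisibility bookkeeping is routine. I would present the forward direction via the minimal-generator divisibility criterion and keep the write-up to a few lines.
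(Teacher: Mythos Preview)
Your proof is correct and follows essentially the same approach as the paper: both directions are handled by writing $x^\gamma = \prod_{i \notin T} x_i^{\gamma_i} \cdot \prod_{i \in T} x_i^{\gamma_i}$ for the backward implication, and by invoking divisibility by a minimal generator $m \in \GS(I)$ with $\supp(m) \subseteq \supp(I) \subseteq T$ for the forward implication.
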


\begin{proof} If $\prod_{i \in T} x_i^{\gamma_i} \in I$, then $x^\gamma = \prod_{i \in [n] \setminus T} x_i^{\gamma_i} \cdot \prod_{i \in T} x_i^{\gamma_i} \in I$. On the other hand, if $x^\gamma \in I$, then there exists some $m \in \GS(I)$ such that $m$ divides $x^\gamma$. Because $\supp(m) \subseteq \supp(I) \subseteq T$, $m$ must also divide $\prod_{i \in T} x_i^{\gamma_i}$, so that $\prod_{i \in T} x_i^{\gamma_i} \in I$ as well.
\end{proof}

With these proved, we may prove the following lemma that collects the variables not in any generator of $I$ as a simplex.

\begin{lemma} \label{reductionlemma2} Let $I \subset S$ be a monomial ideal, $\supp(I) \subseteq T \subseteq [n]$, and $\gamma \in \ZZ^n$, and let $\gamma_T = (\gamma_i : i \in T) \in \ZZ^{|T|}$. Then $\Delta_\gamma(I) = \Delta_{\gamma_T}(I) \ast \Delta_{[n] \setminus T}$, where $\Delta_{[n] \setminus T}$ is the simplex on $[n] \setminus T$.

%Then $F \in \Delta_\gamma(I)$ if and only if $F \cap T \in \Delta_{\gamma_T}(I)$.
\end{lemma}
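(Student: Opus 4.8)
\textbf{Proof plan for \Cref{reductionlemma2}.} The goal is to show that $\Delta_\gamma(I) = \Delta_{\gamma_T}(I) \ast \Delta_{[n] \setminus T}$, where $T \supseteq \supp(I)$. The plan is to prove the set equality of faces in both directions, using \Cref{facelemma} and \Cref{reductionlemma1} to reduce everything to divisibility questions inside $I$, together with the fact that no generator of $I$ involves a variable outside $T$.

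First I would record the shape of an arbitrary face on each side. A face of the join $\Delta_{\gamma_T}(I) \ast \Delta_{[n]\setminus T}$ has the form $F_1 \sqcup F_2$ with $F_1 \in \Delta_{\gamma_T}(I)$ a subset of $T \setminus G_{\gamma_T}$ and $F_2 \subseteq [n] \setminus T$ arbitrary; since $G_\gamma \cap ([n]\setminus T) = G_{\gamma_T}^c$-complications vanish because any $i \notin T$ can be freely included, one checks $[n]\setminus G_\gamma$ decomposes compatibly, so a candidate face $F \subseteq [n]\setminus G_\gamma$ of $\Delta_\gamma(I)$ splits uniquely as $F = (F \cap T) \sqcup (F \setminus T)$ with $F \setminus T \subseteq [n]\setminus T$ always allowed. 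So the content is: $F \in \Delta_\gamma(I)$ iff $F \cap T \in \Delta_{\gamma_T}(I)$.

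Next I would translate membership. By \Cref{removenegs} we may assume $\gamma \in \NN^n$ (replacing $\gamma$ by $\gamma'$ changes neither side), and then by \Cref{facelemma}, $x^\gamma \in I S_F$ iff there is $x^\delta \in k[F]$ with $x^\delta x^\gamma \in I$. Apply \Cref{reductionlemma1} with the set $T$: $x^\delta x^\gamma \in I$ iff $\prod_{i \in T} (x^\delta x^\gamma)_i \in I$. Since $\delta$ is supported on $F$, splitting $F = (F\cap T)\sqcup(F\setminus T)$ shows the exponents contributed to coordinates in $T$ come only from $\delta$ restricted to $F \cap T$ and from $\gamma_T$; the part of $\delta$ on $F \setminus T$ is irrelevant after applying \Cref{reductionlemma1}. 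Hence $x^\gamma \in I S_F$ iff there exists $x^{\delta'} \in k[F \cap T]$ with $x^{\delta'} x^{\gamma_T} \in I$ (viewing these in $k[T]$), i.e. iff $x^{\gamma_T} \in I S_{F \cap T}$ inside the smaller ring. Unwinding \Cref{degcpx} on both sides gives $F \in \Delta_\gamma(I) \iff F \cap T \in \Delta_{\gamma_T}(I)$, and combined with the free inclusion of $F \setminus T \subseteq [n]\setminus T$ this is exactly the claimed join decomposition.

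I do not expect a serious obstacle here; this is a bookkeeping lemma. The one point to be careful about is the index/coordinate matching between $\gamma$, $\gamma_T$, and $\delta$ when passing between the ambient ring $S$ and $k[T]$ (equivalently, $k[x_i : i \in T]$) — one must make sure \Cref{reductionlemma1} is being invoked with a consistent choice of $T$ and that the auxiliary exponent vector $x^\delta$ is decomposed along $F = (F \cap T) \sqcup (F \setminus T)$ before, not after, discarding the coordinates outside $T$. Once that indexing is set up cleanly, each implication is immediate from the cited lemmas.
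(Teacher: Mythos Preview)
Your approach is essentially the same as the paper's: both reduce the join identity to the equivalence $F \in \Delta_\gamma(I) \iff F \cap T \in \Delta_{\gamma_T}(I)$ by applying \Cref{facelemma} and then \Cref{reductionlemma1} to discard the coordinates outside $T$. One small caveat: the paper carries $F \sqcup G_\gamma$ through the localizations directly rather than invoking \Cref{removenegs}, and indeed your parenthetical ``replacing $\gamma$ by $\gamma'$ changes neither side'' is not literally correct (the two degree complexes differ on vertices of $G_\gamma$), so it is cleaner to keep $F \sqcup G_\gamma$ in place throughout, as the paper does.
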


\begin{proof} For any $F \subseteq [n] \setminus G_\gamma$, we have
\begin{equation} \notag
\begin{aligned}
F \in \Delta_\gamma(I) & \Longleftrightarrow & & x^\gamma \not\in IS_{F \sqcup G_{\gamma}} & & \text{(Definition \ref{degcpx})} \\
&  \Longleftrightarrow & & \text{there exists no } x^\delta \in k[F \sqcup G_\gamma] \text{ such that } x^\delta x^\gamma \in I & & \text{(Lemma \ref{facelemma})} \\
& \Longleftrightarrow & & \text{ for all } x^\delta \in k[F \sqcup G_\gamma], \text{ } \prod_{i \in T} x_i^{\delta_i} x_i^{\gamma_i} \not\in I & & \text{(Lemma \ref{reductionlemma1})} \\
& \Longleftrightarrow & & \prod_{i \in T} x_i^{\gamma_i} \not\in IS_{(F \sqcup G_\gamma) \cap T} \\
& \Longleftrightarrow & & x^{\gamma_T} = \prod_{i \in T} x_i^{\gamma_i} \not\in IS_{(F \sqcup G_{\gamma}) \cap T} = IS_{(F \cap T) \cup G_{\gamma_T}} \\
& \Longleftrightarrow & & F \cap T \in \Delta_{\gamma_T}(I)
\end{aligned}
\end{equation}

Therefore $F \in \Delta_\gamma(I)$ if and only if $F = (F \cap T) \cup (F \cap ([n] \setminus T))$ with $F \cap T \in \Delta_{\gamma_T}(I)$ and no restriction on $F \cap ([n] \setminus T)$. Hence, $\Delta_\gamma(I) = \Delta_{\gamma_T}(I) \ast \Delta_{[n] \setminus T}$.

%Observe that this implies that $\Delta_\gamma(I) = \Delta_{\gamma_I}(I) \ast \Delta_{[n] \setminus \supp(I)}$, where $\Delta_{[n] \setminus \supp(I)}$ is the simplex on $\{j : j \in [n] \setminus \supp(I)\}$.
\end{proof}

\begin{notation} \label{convention} Let $\gamma = (\alpha,\beta) \in \ZZ^m \times \ZZ^{n-m}$, so that $\gamma_i = \alpha_i$ for $i \in X = \{1,\ldots,m\}$ and $\gamma_i = \beta_{i-m}$ for $i \in Y = \{m+1,\ldots,n\}$. By convention, let $\Delta_\alpha(\cdot) \subseteq \Delta_X$ and $\Delta_\beta(\cdot) \subseteq \Delta_Y$. %Let $F' = F \cap X$ and $F'' = F \cap Y$, so that $F = F' \sqcup F''$. %Let $\Delta_X$ be the simplex on  $X = \{1,\ldots,m\}$ and $\Delta_Y$ be the simplex on $Y=\{m+1,\ldots,n\}$.
\end{notation}

With these lemmas and conventions in place, we can prove the decompositions of degree complexes of sums and intersections, which we do in the following three theorems.

\begin{theorem} \label{adding} Let $I$ and $J$ be monomial ideals in $S = k[x_1,\ldots,x_n]$, and let $\gamma \in \ZZ^n$.
\begin{enumerate}
\item $\Delta_\gamma(I + J) = \Delta_\gamma(I) \cap \Delta_\gamma(J)$.
\item If $I \subset A = k[x_1,\ldots,x_m]$, $J \subset B = k[x_{m+1},\ldots,x_n]$, and $\gamma = (\alpha,\beta) \in \ZZ^m \times \ZZ^{n-m}$, then $\Delta_\gamma(I + J) = \Delta_\alpha(I) \ast \Delta_\beta(J)$.
\end{enumerate}
\end{theorem}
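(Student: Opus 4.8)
The plan is to prove part (1) directly from the definition of the degree complex, using the addition lemma (\Cref{additionsplitlemma}) as the key input, and then to derive part (2) as a combination of part (1) with the simplex-collapsing lemma (\Cref{reductionlemma2}).

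For part (1), fix $\gamma \in \ZZ^n$. First I would observe that $I \subseteq I+J$ and $J \subseteq I+J$, so by \Cref{subsets}(1) we immediately get $\Delta_\gamma(I+J) \subseteq \Delta_\gamma(I) \cap \Delta_\gamma(J)$; this handles one containment and is essentially free. For the reverse containment, take $F \in \Delta_\gamma(I) \cap \Delta_\gamma(J)$, so $F \subseteq [n] \setminus G_\gamma$ with $x^\gamma \notin IS_{F \sqcup G_\gamma}$ and $x^\gamma \notin JS_{F \sqcup G_\gamma}$. Then $x^\gamma \notin IS_{F \sqcup G_\gamma} + JS_{F \sqcup G_\gamma}$, and by \Cref{additionsplitlemma} (applied with the set $F \sqcup G_\gamma$ in place of $F$) this is equivalent to $x^\gamma \notin (I+J)S_{F \sqcup G_\gamma}$, i.e. $F \in \Delta_\gamma(I+J)$. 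This is clean enough to write as a short chain of iff's rather than two separate containments.

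For part (2), assume $I \subset A$ and $J \subset B$ and write $\gamma = (\alpha,\beta)$. Apply part (1): $\Delta_\gamma(I+J) = \Delta_\gamma(I) \cap \Delta_\gamma(J)$. Now $\supp(I) \subseteq X = \{1,\dots,m\}$, so \Cref{reductionlemma2} with $T = X$ gives $\Delta_\gamma(I) = \Delta_{\gamma_X}(I) \ast \Delta_{[n]\setminus X} = \Delta_\alpha(I) \ast \Delta_Y$, where under \Cref{convention} we regard $\Delta_\alpha(I)$ as a complex on $X$; similarly $\Delta_\gamma(J) = \Delta_X \ast \Delta_\beta(J)$ with $\Delta_\beta(J)$ a complex on $Y$. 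So it remains to check the purely combinatorial identity
$$(\Delta_\alpha(I) \ast \Delta_Y) \cap (\Delta_X \ast \Delta_\beta(J)) = \Delta_\alpha(I) \ast \Delta_\beta(J).$$
A face on the left is a set $F \subseteq [n] \setminus G_\gamma$ such that $F \cap X \in \Delta_\alpha(I)$ (with $F \cap Y$ unconstrained by the first complex) and $F \cap Y \in \Delta_\beta(J)$ (with $F \cap X$ unconstrained by the second); intersecting the two conditions says exactly $F \cap X \in \Delta_\alpha(I)$ and $F \cap Y \in \Delta_\beta(J)$, which is the defining condition for membership in $\Delta_\alpha(I) \ast \Delta_\beta(J)$. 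I would also record the boundary cases: if either $\Delta_\alpha(I)$ or $\Delta_\beta(J)$ is void (which happens when $x^\alpha \in I$ or $x^\beta \in J$ on all relevant localizations), then the join is void and the equality still holds, consistently with part (1).

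I do not expect a genuine obstacle here: the only point requiring a little care is bookkeeping the partition $F = (F \cap X) \sqcup (F \cap Y)$ and making sure the convention in \Cref{convention} (viewing $\Delta_\alpha$ and $\Delta_\beta$ on disjoint vertex sets) is invoked so that the join in the statement is literally the simplicial join of \Cref{degcpx}'s definition. The mildly delicate step is confirming that $[n] \setminus G_\gamma$ is respected throughout — i.e. that $G_\gamma = G_\alpha \sqcup G_\beta$ splits along $X$ and $Y$ — but this is immediate from $\gamma = (\alpha,\beta)$.
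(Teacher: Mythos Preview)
Your proposal is correct and follows essentially the same route as the paper: part (1) is the chain of equivalences via \Cref{additionsplitlemma}, and part (2) combines part (1) with \Cref{reductionlemma2} to reduce to the combinatorial identity $(\Delta_\alpha(I)\ast\Delta_Y)\cap(\Delta_X\ast\Delta_\beta(J))=\Delta_\alpha(I)\ast\Delta_\beta(J)$. The only cosmetic difference is that the paper records this last step as $(\Delta_\alpha(I)\cap\Delta_X)\ast(\Delta_Y\cap\Delta_\beta(J))$ rather than via the explicit face-splitting you describe, and it does not separately invoke \Cref{subsets}(1) for one containment since the iff chain handles both directions at once.
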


\begin{proof} 
\begin{enumerate}
\item For any $F \subseteq [n] \setminus G_\gamma$, we have 
\begin{equation} \notag
\begin{aligned}
F \in \Delta_\gamma(I+J) & \Longleftrightarrow & & x^\gamma \not\in (I+J)S_{F \sqcup G_\gamma} \\
& \Longleftrightarrow & & x^\gamma \not\in IS_{F \sqcup G_\gamma} + JS_{F \sqcup G_\gamma} & (\Cref{additionsplitlemma}) \\
& \Longleftrightarrow & & x^\gamma \not\in IS_{F \sqcup G_\gamma} \text{ and } x^\gamma \not\in JS_{F \sqcup G_\gamma} \\
& \Longleftrightarrow & & F \in \Delta_\gamma(I) \text{ and } F \in \Delta_\gamma(J) \\
& \Longleftrightarrow & & F \in \Delta_\gamma(I) \cap \Delta_\gamma(J)
\end{aligned}
\end{equation}
\item By \Cref{reductionlemma2}, under these conditions,  $\Delta_\gamma(I) = \Delta_\alpha(I) \ast \Delta_Y$ and $\Delta_\gamma(J) = \Delta_X \ast \Delta_\beta(J)$. Then
\begin{equation} \notag
\begin{aligned}
\Delta_\gamma(I + J) & = & & \Delta_\gamma(I) \cap \Delta_\gamma(J) \\
& = & & (\Delta_\alpha(I) \ast \Delta_Y) \cap (\Delta_X \ast \Delta_\beta(J)) \\
& = & & (\Delta_\alpha(I) \cap \Delta_X) \ast (\Delta_Y \cap \Delta_\beta(J)) \\
& = & & \Delta_\alpha(I) \ast \Delta_\beta(J).
\end{aligned}
\end{equation}
\end{enumerate}
\end{proof}

\begin{theorem} \label{lem:intersection} 
Let $I$ and $J$ be monomial ideals in $S$ and $\gamma \in \ZZ^n$. Then $\Delta_\gamma(I \cap J) = \Delta_\gamma(I) \cup \Delta_\gamma(J)$.
\end{theorem}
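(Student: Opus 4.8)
The plan is to argue face-by-face, exactly in the style of the proof of \Cref{adding}(1). Since the three complexes $\Delta_\gamma(I\cap J)$, $\Delta_\gamma(I)$, and $\Delta_\gamma(J)$ all have their faces among the subsets of $[n]\setminus G_\gamma$, and since the union of two subcomplexes of a simplex is again a simplicial complex, it is enough to show that for every $F\subseteq [n]\setminus G_\gamma$ we have $F\in\Delta_\gamma(I\cap J)$ if and only if $F\in\Delta_\gamma(I)$ or $F\in\Delta_\gamma(J)$. By \Cref{degcpx} this amounts to proving
$$x^\gamma\in (I\cap J)S_{F\sqcup G_\gamma}\ \Longleftrightarrow\ x^\gamma\in IS_{F\sqcup G_\gamma}\ \text{and}\ x^\gamma\in JS_{F\sqcup G_\gamma},$$
and then negating this equivalence.

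The forward implication is immediate from $I\cap J\subseteq I$ and $I\cap J\subseteq J$ (equivalently, from \Cref{subsets}(1)). For the converse I would apply \Cref{facelemma} twice: from $x^\gamma\in IS_{F\sqcup G_\gamma}$ and $x^\gamma\in JS_{F\sqcup G_\gamma}$ one obtains monomials $x^{\delta_1},x^{\delta_2}\in k[F\sqcup G_\gamma]$ with $x^{\delta_1}x^\gamma\in I$ and $x^{\delta_2}x^\gamma\in J$. Taking $\delta$ to be the componentwise maximum of $\delta_1$ and $\delta_2$ — still a monomial supported on $F\sqcup G_\gamma$ — the monomial $x^\delta x^\gamma$ is a monomial multiple of both $x^{\delta_1}x^\gamma$ and $x^{\delta_2}x^\gamma$, hence lies in $I\cap J$; applying \Cref{facelemma} in the other direction yields $x^\gamma\in (I\cap J)S_{F\sqcup G_\gamma}$. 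Negating the equivalence then gives $F\in\Delta_\gamma(I\cap J)\iff F\in\Delta_\gamma(I)\cup\Delta_\gamma(J)$, which is the assertion.

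I do not anticipate a real obstacle. The one step deserving care is the common-multiple argument: one must check that $\delta=\max(\delta_1,\delta_2)$ remains supported on $F\sqcup G_\gamma$, so that $x^\delta\in k[F\sqcup G_\gamma]$, and that multiplication by a monomial preserves membership in each of $I$ and $J$ — precisely where the hypothesis that $I$ and $J$ (hence $I\cap J$) are monomial ideals is used. One could instead invoke exactness of localization, which gives $(I\cap J)S_{F\sqcup G_\gamma}=IS_{F\sqcup G_\gamma}\cap JS_{F\sqcup G_\gamma}$ directly, but I would keep the elementary argument so the proof stays within the framework of the lemmas already established.
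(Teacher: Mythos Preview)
Your proposal is correct and follows essentially the same face-by-face approach as the paper, reducing the question to \Cref{facelemma}. If anything, your explicit common-multiple argument (taking $\delta=\max(\delta_1,\delta_2)$) is more careful than the paper's own proof, which passes from ``for all $x^\delta\in k[F\sqcup G_\gamma]$, $x^\delta x^\gamma\notin I\cap J$'' directly to ``$x^\gamma\notin IS_{F\sqcup G_\gamma}$ or $x^\gamma\notin JS_{F\sqcup G_\gamma}$'' without spelling out this step.
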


\begin{proof} For any $F \subseteq [n] \setminus G_\gamma$, we have

\begin{equation} \notag
\begin{aligned}
F \in \Delta_\gamma(I \cap J) & \Longleftrightarrow & & x^\gamma \not\in (I \cap J)S_{F \sqcup G_\gamma} \\
& \Longleftrightarrow & & \text{there exists } x^\delta \in k[F \sqcup G_\gamma] \text{ such that } x^\delta x^\gamma \in I \cap J \hspace{0.5cm} (\text{by } \Cref{facelemma}) \\
& \Longleftrightarrow & & \text{there exists } x^\delta \in k[F \sqcup G_\gamma] \text{ such that } x^\delta x^\gamma \not\in  I \text{ or } x^\delta x^\gamma \not\in J \\
& \Longleftrightarrow & & x^\gamma \not\in IS_{F \sqcup G_\gamma} \text{ or } x^\gamma \not\in JS_{F \sqcup G_\gamma} \\
& \Longleftrightarrow & & F \in \Delta_\gamma(I) \text{ or } F \in \Delta_\gamma(J) \\
& \Longleftrightarrow & & F \in \Delta_\gamma(I) \cup \Delta_\gamma(J)
\end{aligned}
\end{equation}
\end{proof}

%\begin{remark} \label{alphacpx} Let $\gamma = (\alpha,\beta) \in \ZZ^m \times \ZZ^{n-m}$. Then, by convention, let $\Delta_\alpha(\cdot) \subseteq \{Q \subseteq \{1,\ldots,m\}\}$ and $\Delta_\beta(\cdot) \subseteq \{Q \subseteq \{m+1,\ldots,n\}\}$. In other words, only indices that appear in $\alpha$ can appear in the degree complex associated to $\alpha$ (respectively, $\beta$).
%\end{remark}

The case of multiplication of ideals in different polynomial rings reduces to the intersection of those ideals, as follows.

\begin{theorem} \label{multiplying} Let $I \subset A = k[X]$ and $J \subset B = k[Y]$ be monomial ideals and $\gamma = (\alpha,\beta)$. Then $\Delta_\gamma(IJ) = (\Delta_\alpha(I) \ast \Delta_Y) \cup (\Delta_X \ast \Delta_\beta(J))$.
\end{theorem}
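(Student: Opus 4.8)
The plan is to reduce multiplication to intersection. The key observation is that for monomial ideals $I \subset A = k[X]$ and $J \subset B = k[Y]$ whose variable sets are disjoint, one has $IJ = I \cap J$. Indeed, $IJ \subseteq I \cap J$ always holds, and conversely a monomial $m \in I \cap J$ is divisible by some minimal generator $m_I \in \GS(I)$ with $\supp(m_I) \subseteq X$ and by some $m_J \in \GS(J)$ with $\supp(m_J) \subseteq Y$; since $\supp(m_I) \cap \supp(m_J) = \emptyset$, the product $m_I m_J$ divides $m$, so $m \in IJ$. Thus the two ideals coincide.

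Given this, the statement follows by chaining together earlier results. By \Cref{lem:intersection}, $\Delta_\gamma(IJ) = \Delta_\gamma(I \cap J) = \Delta_\gamma(I) \cup \Delta_\gamma(J)$. Now $\supp(I) \subseteq X$ and $\supp(J) \subseteq Y$, so \Cref{reductionlemma2} (applied with $T = X$ and with $T = Y$ respectively) gives $\Delta_\gamma(I) = \Delta_\alpha(I) \ast \Delta_{[n] \setminus X} = \Delta_\alpha(I) \ast \Delta_Y$ and $\Delta_\gamma(J) = \Delta_\beta(J) \ast \Delta_{[n] \setminus Y} = \Delta_X \ast \Delta_\beta(J)$, using \Cref{convention} to identify $\gamma_X = \alpha$ and $\gamma_Y = \beta$ and the commutativity of the simplicial join. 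Combining, $\Delta_\gamma(IJ) = (\Delta_\alpha(I) \ast \Delta_Y) \cup (\Delta_X \ast \Delta_\beta(J))$, as claimed.

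Alternatively, one could argue directly from \Cref{degcpx}: for $F \subseteq [n] \setminus G_\gamma$, \Cref{facelemma} reduces $x^\gamma \in (IJ)S_{F \sqcup G_\gamma}$ to the existence of $x^\delta \in k[F \sqcup G_\gamma]$ with $x^\delta x^\gamma \in IJ$, and the disjointness of $X$ and $Y$ lets one split such a witness into an $A$-part and a $B$-part, landing back in $\Delta_\gamma(I) \cup \Delta_\gamma(J)$. Either way there is no substantial obstacle here; the only point that requires any care is that $\gamma$ may have negative entries, but this is exactly the bookkeeping that \Cref{removenegs} and the passage between $\gamma$ and $\gamma_T$ in \Cref{reductionlemma2} already handle, so no separate argument is needed. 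The genuinely new content of this section is concentrated in \Cref{lem:intersection} and \Cref{reductionlemma2}; \Cref{multiplying} is the short corollary that packages them for ideals in disjoint variable sets.
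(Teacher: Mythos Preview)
Your proof is correct and follows exactly the same approach as the paper: reduce $IJ$ to $I \cap J$ using disjointness of variables, apply \Cref{lem:intersection} to get $\Delta_\gamma(I) \cup \Delta_\gamma(J)$, and then apply \Cref{reductionlemma2} to rewrite each piece as a join with the appropriate simplex. Your version is slightly more detailed (you justify $IJ = I \cap J$ explicitly, whereas the paper simply asserts it), but the structure is identical.
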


\begin{proof} Because $I$ and $J$ share no variables, we have $IJ = I \cap J$, so that 
 
\begin{equation} \notag
\begin{aligned}
\Delta_\gamma(IJ) & = & & \Delta_\gamma(I \cap J) \\
& = & & \Delta_\gamma(I) \cup \Delta_\gamma(J) \hspace{3.5cm} (\text{by } \Cref{lem:intersection}) \\
& = & & (\Delta_\alpha(I) \ast \Delta_Y) \cup (\Delta_X \ast \Delta_\beta(J)) \hspace{1cm} (\text{by } \Cref{reductionlemma2})
\end{aligned}
\end{equation}

%Because $I$ and $J$ are in polynomial rings sharing no variables, $IJ = I \cap J$, so that $\Delta_\gamma(IJ) = \Delta_\gamma(I) \cup \Delta_\gamma(J)$ by \Cref{lem:intersection}. One can observe  that $F \in \Delta_\gamma(I)$ iff $F' \in \Delta_\alpha(I)$ by recalling the equivalence $x^\gamma \notin IS_{F \cup G_\gamma}$ iff $x^\alpha \notin IS_{F' \cup G_\alpha}$ from \Cref{reductionlemma2}. Therefore $F \in \Delta_\gamma(I)$ iff $F = F' \sqcup F''$, where $F' = F \cap \{1,\ldots,m\} \in \Delta_\alpha(I)$, and  $F'' = F \cap \{m+1,\ldots,n\}$ can be any subset of $\{m+1,\ldots,n\}$. We may therefore write $$\Delta_\gamma(I) = \{F' \cup F'' : F' \in \Delta_\alpha(I), F'' \in \{m+1,\ldots,n\}\} = \Delta_\alpha(I) \ast \Delta_Y$$

%Likewise, $F \in \Delta_\gamma(J)$ iff $F'' \in \Delta_\beta(J)$, so $$\Delta_\gamma(J) = \Delta_X \ast \Delta_\beta(J).$$ Therefore $$\Delta_\gamma(IJ) = \Delta_\gamma(I) \cup \Delta_\gamma(J) = (\Delta_\alpha(I) \ast \Delta_Y) \cup (\Delta_X \ast \Delta_\beta(J)).$$

\end{proof}

With the description that, for $I \subset A$ and $J \subset B$, $(I + J)^s = \sum_{j=0}^s I^j J^{s-j}$, the degree complex of ordinary powers of a sum of ideals has an explicit decomposition in terms of degree complexes of powers of $I$ and $J$.

\begin{theorem} \label{powerofsum} If $I \subset A$ and $J \subset B$ are monomial ideals and $\gamma = (\alpha,\beta)$, then $$\Delta_\gamma((I+J)^s) = \bigcup_{j=1}^s \Delta_\alpha(I^j) \ast \Delta_\beta(J^{s-j+1}).$$
\end{theorem}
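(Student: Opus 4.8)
The strategy is to reduce $\Delta_\gamma((I+J)^s)$ to an intersection problem and apply the already-established machinery. The starting point is the algebraic identity $(I+J)^s = \sum_{j=0}^s I^j J^{s-j}$, valid because $I \subset A$ and $J \subset B$ share no variables (so no cross terms collapse). Since $I^0 = J^0 = S$, the terms $j = 0$ and $j = s$ contribute $J^s$ and $I^s$ respectively; I will reindex the sum as $\sum_{j=1}^s I^{j-1} J^{s-j+1}$ to match the stated formula, or equivalently keep the $\bigcup_{j=1}^s$ form with exponents $(j, s-j+1)$. A sum of ideals whose pieces live in complementary variable sets is, up to the usual reductions, an intersection of extended ideals, so I expect to use \Cref{additionsplitlemma} repeatedly to split membership $x^\gamma \in (I+J)^s S_F$ into membership in the individual summands $I^{j-1}J^{s-j+1}S_F$.

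The key steps, in order: (1) Write $(I+J)^s = \sum_{j=1}^s I^{j-1}J^{s-j+1}$ and apply \Cref{additionsplitlemma} (iterated over the finitely many summands) to get $x^\gamma \in (I+J)^s S_{F \sqcup G_\gamma}$ iff $x^\gamma \in I^{j-1}J^{s-j+1}S_{F \sqcup G_\gamma}$ for some $j$. Negating, $F \in \Delta_\gamma((I+J)^s)$ iff $F \in \bigcap_{j=1}^s \Delta_\gamma(I^{j-1}J^{s-j+1})$ — wait, the negation of "exists $j$" is "for all $j$", so this gives an intersection, not a union. This is the point that needs care: the correct reading is $\Delta_\gamma\big(\sum_j K_j\big) = \bigcap_j \Delta_\gamma(K_j)$, generalizing \Cref{adding}(1). (2) Apply \Cref{multiplying} to each factor: $\Delta_\gamma(I^{j-1}J^{s-j+1}) = (\Delta_\alpha(I^{j-1}) \ast \Delta_Y) \cup (\Delta_X \ast \Delta_\beta(J^{s-j+1}))$. (3) Now intersect over $j = 1, \dots, s$, and use the distributive law for $\ast$, $\cup$, $\cap$ on simplicial complexes supported on disjoint vertex sets (the same identity $(\Delta_1 \ast \Delta_2) \cap (\Delta_3 \ast \Delta_4) = (\Delta_1 \cap \Delta_3) \ast (\Delta_2 \cap \Delta_4)$ used in the proof of \Cref{adding}(2)), together with $\Delta_\alpha(I^{j-1}) \supseteq \Delta_\alpha(I^j)$ and $\Delta_\beta(J^k) \supseteq \Delta_\beta(J^{k+1})$ from \Cref{subsets}(1) applied to $I^j \subseteq I^{j-1}$. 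Expanding the big intersection of unions via distributivity and then discarding non-maximal terms using these containments should collapse the expression exactly to $\bigcup_{j=1}^s \Delta_\alpha(I^j) \ast \Delta_\beta(J^{s-j+1})$ — note the stated formula has $I^j$ (not $I^{j-1}$), so a careful bookkeeping of which terms survive the containment-pruning is essential, and I would double-check the edge terms $j=1$ and $j=s$ by hand.

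**Main obstacle.** The genuine work is step (3): the combinatorial simplification of $\bigcap_{j=1}^s \big[(\Delta_\alpha(I^{j-1}) \ast \Delta_Y) \cup (\Delta_X \ast \Delta_\beta(J^{s-j+1}))\big]$ down to a clean union. Naively distributing the intersection over the unions produces $2^s$ terms of the form $\big(\bigcap_{j \in T}\Delta_\alpha(I^{j-1}) \ast \Delta_Y\big) \cap \big(\bigcap_{j \notin T}\Delta_X \ast \Delta_\beta(J^{s-j+1})\big)$ indexed by subsets $T \subseteq \{1,\dots,s\}$; one must observe that by the monotonicity of the complexes in the exponent, only the "threshold" subsets $T = \{1, \dots, t\}$ contribute maximal faces, and that such a $T$ yields the term $\Delta_\alpha(I^{t-1}) \ast \Delta_\beta(J^{s-t})$ or similar. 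Reconciling the indexing so that the final answer reads $\bigcup_{j=1}^s \Delta_\alpha(I^j)\ast\Delta_\beta(J^{s-j+1})$ (with the sum of exponents $j + (s-j+1) = s+1$, consistent with the degree-complex-of-a-power-$s$ "absorbing" one extra degree, as also happens in \Cref{multiplying}) is where an off-by-one error is most likely, so I would verify the cases $s=1$ (recovering \Cref{adding}(2), since $(I+J)^1 = I+J$ and the union is the single term $\Delta_\alpha(I)\ast\Delta_\beta(J)$) and $s=2$ explicitly before trusting the general telescoping.
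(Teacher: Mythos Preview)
Your proposal is correct and follows essentially the same route as the paper: expand $(I+J)^s = \sum_{i=0}^s I^i J^{s-i}$, apply \Cref{adding} to turn the degree complex into $\bigcap_{i=0}^s \Delta_\gamma(I^iJ^{s-i})$, apply \Cref{multiplying} to each factor, and then simplify the resulting intersection of unions using the monotonicity from \Cref{subsets}(1). The only stylistic difference is in your step (3): where you propose to distribute fully into $2^{s+1}$ terms indexed by subsets $T$ and then prune to the ``threshold'' subsets, the paper instead proves the identity $\bigcap_{i=0}^s \big[(\Delta_\alpha(I^i)\ast\Delta_Y)\cup(\Delta_X\ast\Delta_\beta(J^{s-i}))\big] = \bigcup_{j=1}^s \Delta_\alpha(I^j)\ast\Delta_\beta(J^{s-j+1})$ directly by double inclusion, the key observation for $(\subseteq)$ being that if $F$ lies in the intersection then one sets $j:=\min\{i:F\in\Delta_\alpha(I^i)\ast\Delta_Y\}$ (which exists since the $i=s$ term forces $F\in\Delta_\alpha(I^s)\ast\Delta_Y$, the other option $\Delta_X\ast\Delta_\beta(J^0)$ being void) and reads off $F\in\Delta_\alpha(I^j)\ast\Delta_\beta(J^{s-j+1})$ from the $(j{-}1)$st and $j$th factors---this min-index trick is exactly the content of your threshold-subset observation, so the two arguments are the same.
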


\begin{proof} It is well-known that $(I + J)^s = \sum_{i=0}^s I^i J^{s-i}.$ Thus we have the following first equality from the mentioned fact and

\begin{equation} \notag
\begin{aligned}
\Delta_\gamma((I+J)^s) & = & & \Delta_\gamma\left(\sum_{i=0}^s I^i J^{s-i}\right) \\
& = & & \bigcap_{i=0}^s \Delta_\gamma(I^i J^{s-i}) \hspace{1cm} & (\text{by } \Cref{adding}) \\
& = & & \bigcap_{i=0}^s (\Delta_\alpha(I^i) \ast \Delta_Y) \cup (\Delta_X \ast \Delta_\beta(J^{s-i})) \hspace{1cm} & (\text{by } \Cref{multiplying})
\end{aligned}
\end{equation}

\textit{Claim.} %$F \in \bigcap_{i=0}^s (\Delta_\alpha(I^i) \ast \Delta_Y) \cup (\Delta_X \ast \Delta_\beta(J^{s-i})$ iff $F \in \Delta_\alpha(I^j) \ast \Delta_\beta(J^{s-j+1})$ for some $j \in \{1,\ldots,s\}$.
$$\bigcap_{i=0}^s (\Delta_\alpha(I^i) \ast \Delta_Y) \cup (\Delta_X \ast \Delta_\beta(J^{s-i})) = \bigcup_{j=1}^s \Delta_\alpha(I^j) \ast \Delta_\beta(J^{s-j+1})$$

\textit{Proof of the Claim.} 
It is clear that 
\begin{equation} \notag
\begin{aligned}
F \in \bigcup_{j=1}^s \Delta_\alpha(I^j) \ast \Delta_\beta(J^{s-j+1}) & \Longleftrightarrow & & F \in \Delta_\alpha(I^j) \ast \Delta_\beta(J^{s-j+1}) \text{ for some } j \in \{1,\ldots,s\}. \\
%& \Longrightarrow & & \text{For } j \leq i \leq s, \\
%& & & \Delta_\alpha(I^j) \ast \Delta_\beta(J^{s-j+1}) \subseteq \Delta_\alpha(I^j) \ast \Delta_Y \subseteq \Delta_\alpha(I^i) \ast \Delta_Y, \\
%& & & \text{and for } 0 \leq i \leq j - 1, \\
%& & & \Delta_\alpha(I^j) \ast \Delta_\beta(J^{s-j+1}) \subseteq \Delta_X \ast \Delta_\beta(J^{s-j+1}) \subseteq \Delta_X \ast \Delta_\beta(J^{s-i})
\end{aligned}
\end{equation}
It follows from \Cref{subsets} and \Cref{convention} that
$$ \Delta_\alpha(I^j) \ast \Delta_\beta(J^{s-j+1}) \subseteq \Delta_\alpha(I^j) \ast \Delta_Y \subseteq \Delta_\alpha(I^i) \ast \Delta_Y \text{ for } j \leq i \leq s,\text{ and } $$ 
$$ \Delta_\alpha(I^j) \ast \Delta_\beta(J^{s-j+1}) \subseteq \Delta_X \ast \Delta_\beta(J^{s-j+1}) \subseteq \Delta_X \ast \Delta_\beta(J^{s-i}) \text{ for } 0 \leq i \leq j - 1.$$ 

Thus, we conclude the following containment 
 $$\Delta_\alpha(I^j) \ast \Delta_\beta(J^{s-j+1}) \subseteq \bigcap_{i=0}^s (\Delta_\alpha(I^i) \ast \Delta_Y) \cup (\Delta_X \ast \Delta_\beta(J^{s-i})).$$

On the other hand, if $F \in \bigcap_{i=0}^s (\Delta_\alpha(I^i) \ast \Delta_Y) \cup (\Delta_X \ast \Delta_\beta(J^{s-i}))$, then $F \in \Delta_\alpha(I^i) \ast \Delta_Y$ for some $i \in \{1,\ldots,s\}$ because $\Delta_X \ast \Delta_\beta(J^{s-s}) = \Delta_X \ast \Delta_\beta(1)$ is the void complex.

Let $j := \min\{i: F \in \Delta_\alpha(I^i) \ast \Delta_Y\} \in \{1,\ldots,s\}$. Then $F  \in \Delta_X \ast \Delta_\beta(J^{s-(j-1)})$ and
\begin{equation} \notag
\begin{aligned}
F & \in & & [\Delta_X \ast \Delta_\beta(J^{s-(j-1)})] \cap [\Delta_\alpha(I^j) \ast \Delta_Y] = \Delta_\alpha(I^j) \ast \Delta_\beta(J^{s-j+1}) \\
& \subseteq & & \bigcup_{j=1}^s \Delta_\alpha(I^j) \ast \Delta_\beta(J^{s-j+1})
\end{aligned}
\end{equation} $\hfill \diamondsuit$

Therefore, we obtain the desired equality from the claim.
\begin{equation} \notag
\begin{aligned}
\Delta_\gamma((I+J)^s) & = & & \bigcap_{i=0}^s (\Delta_\alpha(I^i) \ast \Delta_Y) \cup (\Delta_X \ast \Delta_\beta(J^{s-i})) \\
& = & & \bigcup_{j=1}^s \Delta_\alpha(I^j) \ast \Delta_\beta(J^{s-j+1})
\end{aligned}
\end{equation}

\end{proof}

By \cite[Theorem 3.4]{HNTT19}, the symbolic power of the sum is given by $(I + J)^{(s)} = \sum_{j=0}^s I^{(j)} J^{(s-j)}$. This similar decomposition translates to a similar decomposition of the degree complex.

\begin{theorem} \label{symbolicsum} If $I \subset A$ and $J \subset B$ are monomial ideals and $\gamma = (\alpha, \beta)$, then $$\Delta_\gamma((I+J)^{(s)}) = \bigcup_{j=1}^s \Delta_\alpha(I^{(j)}) \ast \Delta_\beta(J^{(s-j+1)}).$$
\end{theorem}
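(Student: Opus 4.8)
The plan is to run the same argument as in the proof of \Cref{powerofsum}, with symbolic powers substituted for ordinary powers at every stage. The starting point is the identity $(I+J)^{(s)} = \sum_{j=0}^s I^{(j)} J^{(s-j)}$ from \cite[Theorem 3.4]{HNTT19}, where we adopt the conventions $I^{(0)} = A$ and $J^{(0)} = B$. Applying \Cref{adding} to this sum gives $\Delta_\gamma((I+J)^{(s)}) = \bigcap_{i=0}^s \Delta_\gamma(I^{(i)} J^{(s-i)})$, and since $I^{(i)} \subset A$ and $J^{(s-i)} \subset B$ are monomial ideals in polynomial rings sharing no variables, \Cref{multiplying} rewrites each term as $(\Delta_\alpha(I^{(i)}) \ast \Delta_Y) \cup (\Delta_X \ast \Delta_\beta(J^{(s-i)}))$. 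Thus it remains to establish the combinatorial identity
$$\bigcap_{i=0}^s \big[(\Delta_\alpha(I^{(i)}) \ast \Delta_Y) \cup (\Delta_X \ast \Delta_\beta(J^{(s-i)}))\big] = \bigcup_{j=1}^s \Delta_\alpha(I^{(j)}) \ast \Delta_\beta(J^{(s-j+1)}).$$

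The proof of this identity is verbatim the proof of the Claim inside \Cref{powerofsum}, and the only structural fact it needs about the chain $I^{(0)} \supseteq I^{(1)} \supseteq \cdots$ — beyond \Cref{subsets} and \Cref{convention} — is that symbolic powers form a descending chain of ideals, i.e. $I^{(i)} \subseteq I^{(j)}$ whenever $i \geq j$ (and likewise for $J$). This is immediate from the definition $I^{(s)} = S \cap \bigcap_{\pp \in \Ass(I)} \pp^s S_\pp$, since $\pp^i S_\pp \subseteq \pp^j S_\pp$ for $i \geq j$; consequently \Cref{subsets}(1) gives $\Delta_\alpha(I^{(i)}) \supseteq \Delta_\alpha(I^{(j)})$ for $i \geq j$, which is precisely the nesting used in the ``$\subseteq$'' direction of the Claim. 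For the ``$\supseteq$'' direction one again uses that $\Delta_X \ast \Delta_\beta(J^{(0)}) = \Delta_X \ast \Delta_\beta(1)$ is the void complex (\Cref{subsets}(2), since $J^{(0)} = B$), so any $F$ in the left-hand intersection lies in $\Delta_\alpha(I^{(i)}) \ast \Delta_Y$ for some $i \geq 1$; taking $j$ minimal with this property and intersecting with the fact $F \in \Delta_X \ast \Delta_\beta(J^{(s-(j-1))})$ places $F$ in $\Delta_\alpha(I^{(j)}) \ast \Delta_\beta(J^{(s-j+1)})$.

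There is essentially no genuine obstacle here: the content of the theorem is carried entirely by the additive decomposition $(I+J)^{(s)} = \sum_j I^{(j)} J^{(s-j)}$, which is quoted, and by the elementary fact that symbolic powers are nested. The one point that warrants care is bookkeeping with the conventions — in particular that $I^{(0)}$ and $J^{(0)}$ are read as the unit ideals $A$ and $B$, so that the $i = 0$ and $i = s$ boundary terms of the intersection behave exactly as the corresponding terms did in \Cref{powerofsum}, and that the index shift in $J^{(s-j+1)}$ versus $J^{(s-i)}$ is tracked as in the original claim. Once those points are in place, the displayed identity follows word-for-word and the theorem is proved.
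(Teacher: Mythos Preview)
Your proposal is correct and follows exactly the paper's approach: invoke the decomposition $(I+J)^{(s)} = \sum_{i=0}^s I^{(i)} J^{(s-i)}$ from \cite{HNTT19}, note that symbolic powers form a descending chain so that \Cref{subsets}(1) applies, and then rerun the proof of \Cref{powerofsum} verbatim. The paper's own proof says precisely this in two sentences; your write-up is in fact more detailed than the original.
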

\begin{proof} By \cite{HNTT19}, for $I \subset A$ and $J \subset B$ monomial ideals, $(I+J)^{(s)} = \sum_{i=0}^s I^{(i)}J^{(s-i)}$. Furthermore, we have that because $I^{(k)} \supseteq I^{(\ell)}$ when $k \leq \ell$, and so $\Delta_\gamma(I^{(k)}) \subseteq \Delta_\gamma(I^{(\ell)})$, in the same way as the ordinary power case. The proof then follows step-for-step identically to the proof of \Cref{powerofsum}.
\end{proof}

\begin{example} Let $I = (x_1x_2, x_2x_3, x_3x_4) \subset A = k[x_1,\ldots,x_4]$ and $J = (x_5x_6x_7,x_7x_8) \subset B = k[x_5,\ldots,x_8]$. Let $\alpha = (0,2,0,0)$ and $\beta = (1,0,0,0)$. For a simplicial complex $\Delta$, say that $\Delta = \langle F_1,F_2,\ldots,F_t \rangle$ if $F_1,\ldots,F_t$ are the \textit{facets} (maximal faces with respect to inclusion) of $\Delta$.

Then
\begin{itemize}
\item $\Delta_\alpha(I) = \{F : x_2^2 \not\in IS_F\} = \langle\{2,4\}\rangle$
\item $\Delta_\alpha(I^2) = \{F : x_2^2 \not\in I^2 S_F\} = \langle \{2,4\} \rangle$
\item $\Delta_\alpha(I^3) = \{F : x_2^2 \not\in I^3 S_F\} = \langle \{2,4\}, \{1,3\}, \{1,4\} \rangle$
\item $\Delta_\beta(J) = \{F: x_5 \not\in J S_F\} = \langle \{5,7\}, \{5,6,8\} \rangle$
\item $\Delta_\beta(J^2) = \{F: x_5 \not\in J^2 S_F\} = \langle \{6,7\},\{5,7\},\{5,6,8\} \rangle$
\item $\Delta_\beta(J^3) = \{F: x_5 \not\in J^3 S_F\} = \langle \{6,7\},\{5,7\},\{5,6,8\} \rangle$
\end{itemize}

Then 
\begin{equation} \notag
\begin{aligned}
\Delta_\gamma((I+J)^3) & = & & \bigcup_{j=1}^3 \Delta_\alpha(I^j) \ast \Delta_\beta(J^{s-j}) \\
& = & & \langle \{2,4,6,7\}, \{2,4,5,7\}, \{2,4,5,6,8\} \rangle \cup \\
& & & \langle \{2,4,6,7\}, \{2,4,5,7\}, \{2,4,5,6,8\} \rangle \cup \\
& & & \langle \{2,4,5,7\}, \{2,4,5,6,8\}, \{1,3,5,7\}, \{1,3,5,6,8\}, \{1,4,5,7\}, \{1,4,5,6,8\} \rangle \\
& = & & \langle \{2,4,5,7\}, \{2,4,6,7\}, \{2,4,5,6,8\}, \{1,3,5,7\}, \\
& & &  \{1,3,5,6,8\}, \{1,4,5,7\}, \{1,4,5,6,8\} \rangle.
\end{aligned}
\end{equation}

\end{example}

With this description of the degree complexes, we can discuss the cohomology. We begin with a lemma to address the common situation of calculating the dimension of the reduced homology of a simplicial join.

\begin{lemma} \label{dimhomjoin} For simplicial complexes $\Gamma_1$ and $\Gamma_2$, $$\dim_k \tilde{H}_p(\Gamma_1 \ast \Gamma_2;k) = \sum_{u + v = p} \dim_k \tilde{H}_u(\Gamma_1;k) \cdot \dim_k \tilde{H}_v(\Gamma_2;k).$$
\end{lemma}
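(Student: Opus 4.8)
The statement is a graded version of the classical Künneth-type formula for the reduced homology of a join, namely that the reduced chain complex of a join is (up to a shift) the tensor product of the reduced chain complexes of the two factors. The plan is to make this explicit at the level of simplicial chain complexes and then invoke the algebraic Künneth theorem over the field $k$. Recall that for a simplicial complex $\Gamma$, the augmented (reduced) simplicial chain complex $\tilde{C}_\bullet(\Gamma;k)$ has $\tilde{C}_p(\Gamma;k)$ the $k$-vector space on the $p$-faces for $p \geq 0$ and $\tilde{C}_{-1}(\Gamma;k) = k$ (generated by the empty face), with the usual boundary maps; its homology is $\tilde{H}_p(\Gamma;k)$.

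First I would record the combinatorial fact that a face of $\Gamma_1 \ast \Gamma_2$ is uniquely a disjoint union $\sigma \cup \tau$ with $\sigma \in \Gamma_1$ and $\tau \in \Gamma_2$ (the ground sets are disjoint), and that if $\sigma$ is a $u$-face and $\tau$ is a $v$-face then $\sigma \cup \tau$ is a $(u+v+1)$-face. This gives a vector space isomorphism
$$\tilde{C}_{p}(\Gamma_1 \ast \Gamma_2;k) \;\cong\; \bigoplus_{u+v = p-1} \tilde{C}_u(\Gamma_1;k) \otimes_k \tilde{C}_v(\Gamma_2;k),$$
where the empty face of the join corresponds to $\tilde{C}_{-1} \otimes \tilde{C}_{-1}$ in degree $p = -1$. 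Next I would check that, with a suitable sign convention on the tensor product differential $\partial(a \otimes b) = \partial a \otimes b + (-1)^{|a|} a \otimes \partial b$, these isomorphisms are compatible with the boundary maps; this is a standard (if slightly fiddly) sign check on how the simplicial boundary of $\sigma \cup \tau$ decomposes into the boundary of $\sigma$ and the boundary of $\tau$. Hence $\tilde{C}_\bullet(\Gamma_1 \ast \Gamma_2;k)$ is isomorphic, as a chain complex, to a degree-shifted tensor product $\big(\tilde{C}_\bullet(\Gamma_1;k) \otimes_k \tilde{C}_\bullet(\Gamma_2;k)\big)[1]$.

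Then I would apply the Künneth theorem: over a field $k$ all modules are flat, so there are no Tor terms, and $\tilde{H}_n\big(\tilde{C}_\bullet(\Gamma_1) \otimes_k \tilde{C}_\bullet(\Gamma_2)\big) \cong \bigoplus_{u+v=n} \tilde{H}_u(\Gamma_1;k) \otimes_k \tilde{H}_v(\Gamma_2;k)$. Combining this with the degree shift by one coming from the face-dimension bookkeeping ($(u{+}1){+}(v{+}1) = (u{+}v{+}1)+1$, matching $p = (u{+}v{+}1)$ against the indices $u,v$ on the homology of the factors), taking $k$-dimensions, and using that $\dim_k$ is multiplicative on tensor products of vector spaces, yields
$$\dim_k \tilde{H}_p(\Gamma_1 \ast \Gamma_2;k) = \sum_{u+v=p} \dim_k \tilde{H}_u(\Gamma_1;k)\cdot \dim_k \tilde{H}_v(\Gamma_2;k).$$
One should also separately note the edge cases built into the conventions from the Definition: if either $\Gamma_i$ is the void complex then so is the join and both sides are $0$, and the irrelevant complex $\{\emptyset\}$ acts as a unit since $\tilde{H}_\bullet(\{\emptyset\};k)$ is $k$ concentrated in degree $-1$, consistent with the formula.

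The main obstacle I anticipate is purely bookkeeping: getting the degree shift and the sign conventions in the chain-level isomorphism exactly right so that it is genuinely a chain map, including the boundary cases $u = -1$ or $v = -1$ (where the empty face of a factor participates). Everything else — the combinatorial decomposition of faces, the field Künneth theorem, multiplicativity of dimension — is standard. For the write-up I would either carry out the sign check carefully once, or cite it as the well-known statement that reduced homology of joins satisfies $\tilde{H}_\bullet(\Gamma_1 \ast \Gamma_2) \cong \bigoplus \tilde{H}_\bullet(\Gamma_1) \otimes \tilde{H}_\bullet(\Gamma_2)$ with the appropriate shift (e.g.\ as in standard references on simplicial/combinatorial topology), and then just extract the dimension count.
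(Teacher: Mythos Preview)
Your approach is the same as the paper's---invoke the K\"unneth-type isomorphism for the reduced homology of a simplicial join and then take dimensions---only you supply the chain-level derivation that the paper merely cites from \cite{Mun84}. That derivation is set up correctly: a $u$-face joined with a $v$-face is a $(u{+}v{+}1)$-face, so $\tilde{C}_\bullet(\Gamma_1\ast\Gamma_2)\cong\big(\tilde{C}_\bullet(\Gamma_1)\otimes_k\tilde{C}_\bullet(\Gamma_2)\big)[1]$, and the algebraic K\"unneth theorem over a field then gives
\[
\tilde{H}_p(\Gamma_1\ast\Gamma_2;k)\ \cong\ \bigoplus_{u+v=p-1}\tilde{H}_u(\Gamma_1;k)\otimes_k\tilde{H}_v(\Gamma_2;k).
\]

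The gap is that your final line does not match your own bookkeeping: you correctly record $p=u+v+1$ mid-paragraph and then conclude with $\sum_{u+v=p}$, which is off by one. Your edge-case check with $\Gamma_1=\{\emptyset\}$ already exposes this: since $\tilde{H}_\bullet(\{\emptyset\};k)$ is $k$ concentrated in degree $-1$, the formula $\sum_{u+v=p}$ would force $\tilde{H}_p(\Gamma_2)\cong\tilde{H}_{p+1}(\Gamma_2)$ rather than an identity. Equivalently, the join of two copies of $S^0$ is a $4$-cycle $\simeq S^1$, whose $\tilde{H}_1=k$ arises from $\tilde{H}_0(S^0)\otimes\tilde{H}_0(S^0)$ only under $u+v=p-1$. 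So what your argument actually proves is $\dim_k\tilde{H}_p(\Gamma_1\ast\Gamma_2;k)=\sum_{u+v=p-1}\dim_k\tilde{H}_u(\Gamma_1;k)\cdot\dim_k\tilde{H}_v(\Gamma_2;k)$; the lemma as stated (and the paper's one-line citation of it) carries the same index slip, and your more explicit derivation in fact detects it.
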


\begin{proof} By K\"unneth formula (cf. \cite{Mun84}), $\tilde{H}_p(\Gamma_1 \ast \Gamma_2;k) = \bigoplus_{u+v=p} \tilde{H}_u(\Gamma_1;k) \otimes \tilde{H}_v(\Gamma_2;k).$ Therefore, because $\tilde{H}_\cdot(\cdot;k) \cong k^\ell$ for some $\ell \in \NN$, and because $k^{\ell_1} \oplus k^{\ell_2} \cong k^{\ell_1 + \ell_2}$ and $k^{\ell_1} \otimes k^{\ell_2} \cong k^{\ell_1 \cdot \ell_2}$, we have

\begin{equation} \notag
\begin{aligned}
\dim_k \tilde{H}_p(\Gamma_1 \ast \Gamma_2;k) & = & & \dim_k \bigoplus_{u+v=p} \tilde{H}_u(\Gamma_1;k) \otimes \tilde{H}_v(\Gamma_2;k) \\
& = & & \sum_{u+v=p} (\dim_k \tilde{H}_u(\Gamma_1;k)) \cdot (\dim_k \tilde{H}_v(\Gamma_2;k))
\end{aligned}
\end{equation}

\end{proof} 

Using \Cref{takayama} and K\"unneth formula, we may compute the local $\ZZ^s$-graded cohomology of $S/(I + J)$ explicitly in terms of the local graded $\ZZ^s$-graded cohomology of $A/I$ and of $B/J$.

\begin{theorem} \label{cohomologyofsum} If $I \subset A = k[x_1,\ldots,x_m]$ and $J \subset B = k[x_{m+1},\ldots,x_n]$ are monomial ideals, $\mm = (x_1,\ldots,x_m)$ and $\nn = (x_{m+1},\ldots,x_n)$, $S = A \otimes_k B$, and $\gamma = (\alpha, \beta)$, then 
$$\dim_k H_{\mm+\nn}^p(S/(I+J))_\gamma = \sum_{u + v = p + 1} \dim_k H_\mm^u(A/I)_\alpha \cdot \dim_k H_\nn^v(B/J)_\beta$$
\end{theorem}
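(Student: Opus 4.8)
The plan is to combine Takayama's formula (\Cref{takayama}) with the join decomposition $\Delta_\gamma(I+J) = \Delta_\alpha(I) \ast \Delta_\beta(J)$ of \Cref{adding}(2) and the K\"unneth-type identity for joins in \Cref{dimhomjoin}; the only genuine content is the bookkeeping with negative supports. First I would record two elementary facts. Since $\alpha$ occupies the coordinates indexed by $X = \{1,\dots,m\}$ and $\beta$ those indexed by $Y = \{m+1,\dots,n\}$, the negative support splits as $G_\gamma = G_\alpha \sqcup G_\beta$, where $G_\alpha = G_\gamma \cap X$ and $G_\beta = G_\gamma \cap Y$; in particular $|G_\gamma| = |G_\alpha| + |G_\beta|$. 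Second, because $X$ and $Y$ are disjoint, a subset of the form $G_\alpha \sqcup G_\beta$ lies in a join $\Gamma_1 \ast \Gamma_2$ with $\Gamma_1 \subseteq \Delta_X$ and $\Gamma_2 \subseteq \Delta_Y$ if and only if $G_\alpha \in \Gamma_1$ and $G_\beta \in \Gamma_2$; applying this to the $\gamma = \ob$ case of \Cref{adding}(2), namely $\Delta_\ob(I+J) = \Delta_\ob(I) \ast \Delta_\ob(J)$, yields $G_\gamma \in \Delta_\ob(I+J)$ if and only if $G_\alpha \in \Delta_\ob(I)$ and $G_\beta \in \Delta_\ob(J)$.

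With these in hand I would split into the two cases of \Cref{takayama}. If $G_\gamma \notin \Delta_\ob(I+J)$, the left-hand side is $0$; by the second fact above, at least one of $G_\alpha \notin \Delta_\ob(I)$ or $G_\beta \notin \Delta_\ob(J)$ holds, so either every $H_\mm^u(A/I)_\alpha$ vanishes or every $H_\nn^v(B/J)_\beta$ vanishes, and the right-hand side is $0$ as well. If $G_\gamma \in \Delta_\ob(I+J)$, then also $G_\alpha \in \Delta_\ob(I)$ and $G_\beta \in \Delta_\ob(J)$. Applying \Cref{takayama} to $S/(I+J)$, substituting the join decomposition, and expanding via \Cref{dimhomjoin} gives
$$\dim_k H_{\mm+\nn}^p(S/(I+J))_\gamma = \sum_{a+b \,=\, p - |G_\gamma| - 1} \dim_k \tilde H_a(\Delta_\alpha(I);k)\cdot \dim_k \tilde H_b(\Delta_\beta(J);k).$$
Now I would reparametrize by $u := a + |G_\alpha| + 1$ and $v := b + |G_\beta| + 1$; using $|G_\gamma| = |G_\alpha| + |G_\beta|$, this is a bijection between $\{(a,b) : a+b = p-|G_\gamma|-1\}$ and $\{(u,v): u+v = p+1\}$, and under it \Cref{takayama} applied to $A/I$ and to $B/J$ (the side conditions $G_\alpha \in \Delta_\ob(I)$, $G_\beta \in \Delta_\ob(J)$ now being in force) turns $\dim_k \tilde H_a(\Delta_\alpha(I);k)$ into $\dim_k H_\mm^u(A/I)_\alpha$ and $\dim_k \tilde H_b(\Delta_\beta(J);k)$ into $\dim_k H_\nn^v(B/J)_\beta$. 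Terms with $a \le -2$, equivalently $u < |G_\alpha|$, contribute $0$ (and symmetrically in $b$, $v$), so the summation range is harmless, and we obtain the claimed identity.

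The substance of the argument is thus confined to the index bookkeeping: matching the ``$G_\gamma \in \Delta_\ob$ or $0$'' clause of Takayama's formula consistently across the three rings $A$, $B$, $S$, and verifying that the homological shift $|G_\gamma| = |G_\alpha| + |G_\beta|$ is exactly what makes the target index come out as $p+1$ in the K\"unneth sum. I expect this to be the only place requiring care; once it is set up, the result is a formal consequence of \Cref{takayama}, \Cref{adding}, and \Cref{dimhomjoin}, with no further computation.
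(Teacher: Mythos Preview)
Your proof is correct and follows essentially the same route as the paper: apply \Cref{takayama}, substitute the join decomposition from \Cref{adding}(2), expand via \Cref{dimhomjoin}, and reparametrize by shifting by $|G_\alpha|+1$ and $|G_\beta|+1$. If anything, you are more careful than the paper, which tacitly assumes $G_\gamma \in \Delta_\ob(I+J)$ and does not explicitly verify (as you do) that this is equivalent to $G_\alpha \in \Delta_\ob(I)$ and $G_\beta \in \Delta_\ob(J)$, the condition needed to invoke \Cref{takayama} on the $A/I$ and $B/J$ sides.
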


\begin{proof} By \Cref{takayama}, if $G_\gamma \in \Delta_\ob(I+J)$, we have

\begin{equation} \notag
\begin{aligned}
\dim_k H_{\mm+\nn}^p(S/(I+J))_\gamma & = & & \dim_k \tilde{H}_{p - |G_\gamma| - 1}(\Delta_\gamma(I+J);k) \\ 
& = & & \dim_k \tilde{H}_{p - |G_\gamma| - 1}(\Delta_\alpha(I) \ast \Delta_\beta(J);k) \\
& = & & \sum_{u' + v' = p - |G_\gamma| - 1} \dim_k \tilde{H}_{u'}(\Delta_\alpha(I);k) \cdot \dim_k \tilde{H}_{v'}(\Delta_\beta(J);k),
\end{aligned}
\end{equation}

where the last equality is due to \Cref{dimhomjoin}.  

Letting $u' = u - |G_\alpha| - 1$ and $v' = v - |G_\beta| - 1$, we have
\begin{equation} \notag
\begin{aligned}
u' + v' = p - |G_\gamma| - 1 & \Longleftrightarrow & & u - |G_\alpha| - 1 + v - |G_\beta| - 1 = p - |G_\gamma| - 1 \\
& \Longleftrightarrow & & u + v = p + 1
\end{aligned}
\end{equation}

Then we have

\begin{equation} \notag
\begin{aligned}
\dim_k H_{\mm+\nn}^p(S/(I+J))_\gamma & = & & \sum_{u + v = p + 1} \dim_k \tilde{H}_{u - |G_\alpha| - 1}(\Delta_\alpha(I);k) \cdot \dim_k \tilde{H}_{v - |G_\beta| - 1}(\Delta_\beta(J);k) \\
& = & & \sum_{u + v = p + 1} \dim_k H_\mm^u(A/I)_\alpha \cdot \dim_k H_\nn^v(B/J)_\beta,
\end{aligned}
\end{equation}

where the last equality is again due to \Cref{takayama}.

\iffalse
where the last equality is due to K\"unneth formula.

Therefore, using the fact $\dim_k(M \oplus N) = \dim_k(M) + \dim_k(N)$ and $\dim_k(M \otimes N) = \dim_k(M) \cdot \dim_k(N)$, we have 

\begin{equation} \notag
\begin{aligned}
& = & & \dim_k \bigoplus_{u + v = p + 1} \tilde{H}_{u - |G_\alpha| - 1}(\Delta_\alpha(I);k) \otimes \tilde{H}_{v - |G_\beta| - 1}(\Delta_\beta(J);k) \\
& = & & \sum_{u + v = p + 1} (\dim_k \tilde{H}_{u - |G_\alpha| - 1}(\Delta_\alpha(I);k) \cdot (\dim_k \tilde{H}_{v - |G_\beta| - 1}(\Delta_\beta(J);k) \\
& = & & \sum_{u + v = p + 1} (\dim_k H_\mm^u(A/I)_\alpha) \cdot (\dim_k H_\nn^v (B/J)_\beta),
\end{aligned}
\end{equation}
\fi

%where the last equality is due to \Cref{takayama} again.

\end{proof}

Likewise, we may compute the cohomology of $S/(IJ)$ in terms of the cohomologies of $A/I$ and $B/J$, due to an isomorphism derived in the Mayer-Vietoris sequence between $\tilde{H}_p(S/(IJ))$ and $\tilde{H}_{p-1}(S/(I+J))$. The author wishes to thank Selvi Kara Beyarslan for presenting this isomorphism.

\begin{theorem} \label{cohomologyofproduct} If $I \subset A$ and $J \subset B$ are monomial ideals, $S = A \otimes_k B$, and $\gamma = (\alpha,\beta)$, then

$$\dim_k H_{\mm+\nn}^p(S/(IJ))_\gamma = \sum_{u + v = p} \dim_k H_\mm^u(A/I)_\alpha \cdot \dim_k H_\nn^v(B/J)_\beta$$
\end{theorem}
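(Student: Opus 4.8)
The plan is to imitate the structure of the proof of \Cref{cohomologyofsum}, but with the degree-complex decomposition of $IJ$ from \Cref{multiplying} in place of the one for $I+J$ from \Cref{adding}(2), together with a Mayer--Vietoris argument to relate the reduced homology of $\Delta_\gamma(IJ)$ to that of $\Delta_\gamma(I+J)$. Concretely, \Cref{multiplying} gives $\Delta_\gamma(IJ) = (\Delta_\alpha(I) \ast \Delta_Y) \cup (\Delta_X \ast \Delta_\beta(J))$, and by \Cref{adding}(2) the intersection of these two pieces is $(\Delta_\alpha(I) \ast \Delta_Y) \cap (\Delta_X \ast \Delta_\beta(J)) = (\Delta_\alpha(I) \cap \Delta_X) \ast (\Delta_Y \cap \Delta_\beta(J)) = \Delta_\alpha(I) \ast \Delta_\beta(J) = \Delta_\gamma(I+J)$. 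Both pieces $\Delta_\alpha(I) \ast \Delta_Y$ and $\Delta_X \ast \Delta_\beta(J)$ are joins with a full simplex, hence are cones (as long as $Y$, resp.\ $X$, is nonempty), so they have trivial reduced homology in every degree. The reduced Mayer--Vietoris sequence for $\Delta_\gamma(IJ) = U \cup V$ with $U \cap V = \Delta_\gamma(I+J)$ then collapses to give isomorphisms $\tilde H_p(\Delta_\gamma(IJ);k) \cong \tilde H_{p-1}(\Delta_\gamma(I+J);k)$ for all $p$ (this is the isomorphism attributed to Selvi Kara Beyarslan in the statement).

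From there I would run the indexing bookkeeping. By \Cref{takayama}, assuming $G_\gamma \in \Delta_{\mathbf 0}(IJ)$,
\begin{equation} \notag
\dim_k H_{\mm+\nn}^p(S/(IJ))_\gamma = \dim_k \tilde H_{p - |G_\gamma| - 1}(\Delta_\gamma(IJ);k) = \dim_k \tilde H_{p - |G_\gamma| - 2}(\Delta_\gamma(I+J);k).
\end{equation}
Now $\Delta_\gamma(I+J) = \Delta_\alpha(I) \ast \Delta_\beta(J)$ by \Cref{adding}(2), so \Cref{dimhomjoin} gives
\begin{equation} \notag
\dim_k \tilde H_{p - |G_\gamma| - 2}(\Delta_\alpha(I) \ast \Delta_\beta(J);k) = \sum_{u' + v' = p - |G_\gamma| - 2} \dim_k \tilde H_{u'}(\Delta_\alpha(I);k)\cdot \dim_k \tilde H_{v'}(\Delta_\beta(J);k).
\end{equation}
Substituting $u' = u - |G_\alpha| - 1$, $v' = v - |G_\beta| - 1$ and using $|G_\gamma| = |G_\alpha| + |G_\beta|$ converts the condition $u' + v' = p - |G_\gamma| - 2$ into $u + v = p$, and then \Cref{takayama} applied to $A/I$ and $B/J$ rewrites each factor as $\dim_k H_\mm^u(A/I)_\alpha$ and $\dim_k H_\nn^v(B/J)_\beta$, yielding the claimed formula.

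The main obstacle is the careful handling of the boundary and degenerate cases so that the formula is literally correct (not just correct "in the generic case"). First, one must check the cone claim: $\Delta_\alpha(I)\ast\Delta_Y$ is a cone provided $Y \neq \emptyset$, i.e.\ $n > m$; the case $n = m$ (so $B = k$, $J = 0$ or $J = B$) should be dispatched separately, and one should confirm the stated formula still holds there (both sides vanish appropriately, or reduce to a triviality). Second, the Mayer--Vietoris argument needs $U$, $V$, and $U\cap V$ all to be genuine simplicial complexes including the possibility of being void or irrelevant; in particular if one of $\Delta_\alpha(I)$, $\Delta_\beta(J)$ is void then $\Delta_\gamma(I+J)$ is void, $\tilde H_\cdot$ of the void complex vanishes in every degree (as recorded in the Definition of simplicial complex), and one checks $\Delta_\gamma(IJ)$ is then a cone, so both sides are zero — consistent with the formula. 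Third, the hypothesis $G_\gamma \in \Delta_{\mathbf 0}(IJ)$ in \Cref{takayama} must be matched up with $G_\alpha \in \Delta_{\mathbf 0}(I)$ and $G_\beta \in \Delta_{\mathbf 0}(J)$: using \Cref{multiplying} (or \Cref{reductionlemma2}) at $\gamma = \mathbf 0$ one sees $G_\gamma \in \Delta_{\mathbf 0}(IJ)$ iff $G_\alpha \in \Delta_{\mathbf 0}(I)$ and $G_\beta \in \Delta_{\mathbf 0}(J)$, so when the left side's hypothesis fails, at least one right-side factor is zero for every $u$ or every $v$, and the equation $0 = 0$ still holds. Once these edge cases are nailed down the computation is the same shift-of-indices routine as in \Cref{cohomologyofsum}.
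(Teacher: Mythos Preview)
Your proposal is essentially the paper's own proof: decompose $\Delta_\gamma(IJ)$ via \Cref{multiplying}, identify the intersection as $\Delta_\gamma(I+J)$, observe both pieces are acyclic (cones over a simplex, or equivalently via K\"unneth with a contractible factor), collapse Mayer--Vietoris to $\tilde H_p(\Delta_\gamma(IJ))\cong \tilde H_{p-1}(\Delta_\gamma(I+J))$, and then invoke \Cref{cohomologyofsum} (which you inline rather than cite). One small slip in your edge-case discussion: from $\Delta_{\mathbf 0}(IJ)=(\Delta_{\mathbf 0}(I)\ast\Delta_Y)\cup(\Delta_X\ast\Delta_{\mathbf 0}(J))$ one gets $G_\gamma\in\Delta_{\mathbf 0}(IJ)$ iff $G_\alpha\in\Delta_{\mathbf 0}(I)$ \emph{or} $G_\beta\in\Delta_{\mathbf 0}(J)$, not ``and''; the conclusion you draw is unaffected, since in the remaining case (say $G_\alpha\notin\Delta_{\mathbf 0}(I)$) the complex $\Delta_\alpha(I)$ is void, $\Delta_\gamma(IJ)$ is a cone, and both sides vanish---exactly the scenario you already handle separately.
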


\begin{proof} By \Cref{multiplying}, 
$$\Delta_\gamma(IJ) = (\Delta_\alpha(I) \ast \Delta_Y) \cup (\Delta_X \ast \Delta_\beta(J)).$$

Furthermore,
\begin{equation} \notag
\begin{aligned} 
(\Delta_\alpha(I) \ast \Delta_Y) \cap (\Delta_X \ast \Delta_\beta(J)) & = & & (\Delta_\alpha(I) \cap \Delta_X) \ast (\Delta_Y \cap \Delta_\beta(J)) \\
& = & & \Delta_\alpha(I) \ast \Delta_\beta(J) \\
& = & & \Delta_\gamma(I + J)
\end{aligned}
\end{equation}

Therefore, via a Mayer-Vietoris sequence (cf. \cite[pg.22]{Sta83}), we have the following long exact sequence of reduced homologies.

$$\cdots \to \tilde{H}_p(\Delta_\alpha(I) \ast \Delta_Y) \oplus \tilde{H}_p(\Delta_X \ast \Delta_\beta(J)) \to \tilde{H}_p(\Delta_\gamma(IJ)) \to \tilde{H}_{p-1}(\Delta_\gamma(I + J))$$
$$ \to \tilde{H}_{p-1}(\Delta_\alpha(I) \ast \Delta_Y) \oplus \tilde{H}_{p-1}(\Delta_X \ast \Delta_\beta(J)) \to \cdots$$

By K\"unneth formula, we have $\tilde{H}_p(\Delta_\alpha(I) \ast \Delta_Y) = \bigoplus_{u + v = p} \tilde{H}_u(\Delta_\alpha(I)) \otimes \tilde{H}_v(\Delta_Y)$. Furthermore, because a simplex is contractible, $\tilde{H}_v(\Delta_Y) = 0$ for all $v$, so that $\bigoplus_{u + v = p} \tilde{H}_u(\Delta_\alpha(I)) \otimes \tilde{H}_v(\Delta_Y) = 0$ for any $p$. Similarly, $\tilde{H}_p(\Delta_X \ast \Delta_\beta(J)) = 0$ for all $p$. Therefore, for each $p$, the Mayer-Vietoris sequence reduces to

$$\cdots \to 0 \to \tilde{H}_p(\Delta_\gamma(IJ)) \tilde{\to} \tilde{H}_{p-1}(\Delta_\gamma(I+J)) \to 0 \to \cdots$$

Therefore 

\begin{equation} \notag
\begin{aligned}
\dim_k H_{\mm + \nn}^p(S/(IJ))_\gamma & = & & \dim_k \tilde{H}_{p - |G_\gamma| - 1}(\Delta_\gamma(IJ);k) \\
& = & & \dim_k \tilde{H}_{p - |G_\gamma| - 2}(\Delta_\gamma(I + J);k) \\
& = & & \dim_k H_{\mm + \nn}^{p - 1}(S/(I+J))_\gamma \\
& = & & \sum_{u + v = p} (\dim_k H_\mm^u(A/I)_\alpha) \cdot (\dim_k H_\nn^v(B/J)_\beta) \hspace{1cm} (\text{by } \Cref{cohomologyofsum})
\end{aligned}
\end{equation}

\end{proof}

Using a collection of Mayer-Vietoris sequences, the dimension of the local cohomology modules of $\Delta_i := \bigcup_{j=i}^s \Delta_\alpha(I^j) \ast \Delta_\beta(J^{s-j+1})$ can be computed in terms of the dimensions of $H_\mm^u(A/I^{i})$, $H_\nn^v(B/J^{j})$, and $\tilde{H}_p(\Delta_{i+1})$. Because $\tilde{H}_p(\Delta_s)$ can also be computed in terms of $H_\mm^u(A/I^i)$ and $H_\nn^v(B/J^j)$, the dimension of the graded cohomology of $\Delta_1 = \Delta_\gamma((I+J)^s)$ could be computed from these long exact sequences. 

\begin{theorem} \label{lescohom} There is a collection of long exact sequences which relate $H_{\mm+\nn}^p(S/(I+J)^s)_\gamma$ to the modules $H_\mm^u(A/I^i)_\alpha$ and $H_\nn^v(B/J^j)_\beta$ and a collection which relate $H_{\mm+\nn}^p(S/(I+J)^{(s)})_\gamma$ to the modules $H_\mm^u(A/I^{(i)})_\alpha$ and $H_\nn^v(B/J^{(j)})_\beta$, with $1 \leq i,j \leq s$.
\end{theorem}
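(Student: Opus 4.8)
The plan is to peel \Cref{powerofsum} apart one power at a time via Mayer--Vietoris, exactly in the spirit of the proof of \Cref{cohomologyofproduct}. For $1 \le i \le s$ put
$$\Delta_i := \bigcup_{j=i}^s \Delta_\alpha(I^j)\ast\Delta_\beta(J^{s-j+1}),$$
so that by \Cref{powerofsum} we have $\Delta_1 = \Delta_\gamma((I+J)^s)$, while $\Delta_s = \Delta_\alpha(I^s)\ast\Delta_\beta(J)$ is a single join whose reduced homology is computed by \Cref{dimhomjoin} (K\"unneth) and then reindexed by \Cref{takayama} into $H_\mm^\bullet(A/I^s)_\alpha$ and $H_\nn^\bullet(B/J)_\beta$, just as in the proof of \Cref{cohomologyofsum}. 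Writing $U_i := \Delta_\alpha(I^i)\ast\Delta_\beta(J^{s-i+1})$ for the $j=i$ term, we have $\Delta_i = U_i \cup \Delta_{i+1}$ as subcomplexes of $\Delta_X \ast \Delta_Y$, and the recursion runs over $i = s-1, s-2, \dots, 1$.

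The first step is to identify the intersection $U_i \cap \Delta_{i+1}$. Since $I^j \subseteq I^i$ and $J^{s-i+1}\subseteq J^{s-j+1}$ for $j \ge i+1$, \Cref{subsets}(1) gives $\Delta_\alpha(I^i)\subseteq\Delta_\alpha(I^j)$ and $\Delta_\beta(J^{s-j+1})\subseteq\Delta_\beta(J^{s-i+1})$; using that the simplicial join distributes over intersection (as in the proof of \Cref{adding}(2)), the $j$-th term of the intersection collapses to $\Delta_\alpha(I^i)\ast\Delta_\beta(J^{s-j+1})$, and taking the union over $j = i+1,\dots,s$ --- where the complexes $\Delta_\beta(J^1)\subseteq\cdots\subseteq\Delta_\beta(J^{s-i})$ form an ascending chain by \Cref{subsets}(1) --- yields
$$U_i \cap \Delta_{i+1} = \Delta_\alpha(I^i)\ast\Delta_\beta(J^{s-i}).$$
Then Mayer--Vietoris (cf. \cite[pg.22]{Sta83}) supplies, for each $i \in \{1,\dots,s-1\}$, the long exact sequence of reduced homology
$$\cdots \to \tilde{H}_p\bigl(\Delta_\alpha(I^i)\ast\Delta_\beta(J^{s-i})\bigr) \to \tilde{H}_p(U_i)\oplus\tilde{H}_p(\Delta_{i+1}) \to \tilde{H}_p(\Delta_i) \to \tilde{H}_{p-1}\bigl(\Delta_\alpha(I^i)\ast\Delta_\beta(J^{s-i})\bigr) \to \cdots.$$

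Next I would interpret each term. Both $U_i = \Delta_\alpha(I^i)\ast\Delta_\beta(J^{s-i+1})$ and $\Delta_\alpha(I^i)\ast\Delta_\beta(J^{s-i})$ are joins, so by \Cref{dimhomjoin} their reduced homology splits as sums of tensor products $\tilde{H}_u(\Delta_\alpha(I^i))\otimes\tilde{H}_v(\Delta_\beta(J^{s-i+1}))$ (resp.\ with $J^{s-i}$), and by \Cref{takayama} --- with the index shift $u \mapsto u-|G_\alpha|-1$, $v \mapsto v-|G_\beta|-1$ carried out exactly as in the proof of \Cref{cohomologyofsum} --- these become the modules $H_\mm^u(A/I^i)_\alpha$ and $H_\nn^v(B/J^j)_\beta$ with $j = s-i$ or $s-i+1$, hence $1 \le i,j \le s$. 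The remaining terms $\tilde{H}_\bullet(\Delta_i)$ and $\tilde{H}_\bullet(\Delta_{i+1})$ are the ``unknowns'' linked by the sequences; starting from $\tilde{H}_\bullet(\Delta_s)$, the chain of sequences for $i = s-1,\dots,1$ then relates everything about $\tilde{H}_\bullet(\Delta_1)$ --- and hence, via \Cref{takayama} applied to $\Delta_1 = \Delta_\gamma((I+J)^s)$ (noting that $G_\gamma = G_\alpha \sqcup G_\beta$ lies in $\Delta_\ob((I+J)^s) = \Delta_\ob(I)\ast\Delta_\ob(J)$ iff $G_\alpha \in \Delta_\ob(I)$ and $G_\beta \in \Delta_\ob(J)$, by \Cref{subsets}(4) and \Cref{adding}(2), the other case being vacuous) --- about $H_{\mm+\nn}^p(S/(I+J)^s)_\gamma$, in terms of the $H_\mm^u(A/I^i)_\alpha$ and $H_\nn^v(B/J^j)_\beta$. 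The symbolic statement is proved verbatim: replace \Cref{powerofsum} by \Cref{symbolicsum}, every $I^j, J^j$ by $I^{(j)}, J^{(j)}$, and use $\Delta_\alpha(I^{(k)})\subseteq\Delta_\alpha(I^{(\ell)})$ for $k\le\ell$ together with $(I+J)^{(s)}$ in place of $(I+J)^s$.

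The main obstacle is organizational rather than conceptual: as in the proof of \Cref{cohomologyofproduct}, \Cref{takayama} only identifies $k$-vector-space dimensions, so the Mayer--Vietoris sequences genuinely live at the level of reduced simplicial homology and are transported to the $H_{\mm+\nn}^\bullet$, $H_\mm^\bullet$, $H_\nn^\bullet$ only after a termwise, degree-shift-consistent application of \Cref{takayama} --- one must track $|G_\alpha|$, $|G_\beta|$, $|G_\gamma|$ in every term of every sequence so that the shifts match up. Unlike in \Cref{cohomologyofproduct}, the ``correction'' complexes $\Delta_{i+1}$ do not have vanishing reduced homology, so the recursion does not collapse to a closed formula; this is precisely why the statement asserts only the existence of such a collection of sequences rather than an explicit dimension count --- $H_{\mm+\nn}^p(S/(I+J)^s)_\gamma$ is recovered as an iterated extension of the pieces coming from the $H_\mm^u(A/I^i)_\alpha$ and $H_\nn^v(B/J^j)_\beta$.
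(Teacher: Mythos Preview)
Your proof is correct and follows essentially the same approach as the paper: the same auxiliary complexes $\Delta_i$, the same computation of $U_i\cap\Delta_{i+1}=\Delta_\alpha(I^i)\ast\Delta_\beta(J^{s-i})$, the same Mayer--Vietoris recursion from $i=s-1$ down to $i=1$, and the same interpretation of the join terms via K\"unneth and Takayama. The paper additionally extracts the short exact sequence $0\to\operatorname{coker}f_\ast\to\tilde{H}_p(\Delta_i)\to\ker f_\ast\to 0$ to make the role of the connecting map explicit, which is exactly what you describe as ``iterated extension''; your added check that $G_\gamma\in\Delta_\ob((I+J)^s)$ is a nice detail the paper leaves implicit.
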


\begin{proof} Let $\Delta_i := \bigcup_{j = i}^s \Delta_\alpha(I^j) \ast \Delta_\beta(J^{s-j+1})$. Then by \Cref{powerofsum}, $\Delta_\gamma((I+J)^s) = \Delta_1$. By definition, $\Delta_i = (\Delta_\alpha(I^i) \ast \Delta_\beta(J^{s-i+1})) \cup \Delta_{i+1}$. Moreover, 

\begin{equation} \notag
\begin{aligned}
(\Delta_\alpha(I^i) \ast \Delta_\beta(J^{s-i+1})) \cap \Delta_{i+1} & = & & (\Delta_\alpha(I^i) \ast \Delta_\beta(J^{s-i+1})) \cap \bigg( \bigcup_{j=i+1}^s \Delta_\alpha(I^j) \ast \Delta_\beta(J^{s-j+1}) \bigg) \\
& = & & \bigcup_{j=i+1}^s (\Delta_\alpha(I^i) \cap \Delta_\alpha(I^j)) \ast (\Delta_\beta(J^{s-i+1}) \cap \Delta_\beta(J^{s-j+1})) \\
& = & & \bigcup_{j=i+1}^s (\Delta_\alpha(I^i) \ast \Delta_\beta(J^{s-j+1}) \\
& = & & \Delta_\alpha(I^i) \ast \Delta_\beta(J^{s-i})
\end{aligned}
\end{equation}

Therefore, we may construct the following Mayer-Vietoris sequences for $i = 1,\ldots,s-1$.

$$\cdots \to \tilde{H}_p(\Delta_\alpha(I^i) \ast \Delta_\beta(J^{s-i})) \overset{f_\ast}{\to} \tilde{H}_p(\Delta_\alpha(I^i) \ast \Delta_\beta(J^{s-i+1})) \oplus \tilde{H}_p(\Delta_{i+1}) \to \tilde{H}_p(\Delta_i)$$
$$\to \tilde{H}_{p-1}(\Delta_\alpha(I^i) \ast \Delta_\beta(J^{s-i})) \overset{f_\ast}{\to} \tilde{H}_{p-1}(\Delta_\alpha(I^i) \ast \Delta_\beta(J^{s-i+1})) \oplus \tilde{H}_{p-1}(\Delta_{i+1}) \to \cdots$$

where the map $f_\ast = (\iota_\ast^1,\iota_\ast^2)$ is the map induced by the inclusions $\iota^1: \Delta_\alpha(I^i) \ast \Delta_\beta(J^{s-i}) \to \Delta_\alpha(I^i) \ast \Delta_\beta(J^{s-i+1})$ and $\iota^2: \Delta_\alpha(I^i) \ast \Delta_\beta(J^{s-i}) \to \Delta_{i+1}$.

Furthermore, via \Cref{adding} and \Cref{cohomologyofsum},
\begin{equation} \notag
\begin{aligned}
\dim_k\tilde{H}_p(\Delta_s) & = & & \dim_k \tilde{H}_p(\Delta_\alpha(I^s) \ast \Delta_\beta(J)) \\
& = & & \dim_k \tilde{H}_p(\Delta_\gamma(I^s + J)) \\
& = & & \sum_{u + v = p + 1} (\dim_k H_\mm^u(A/I^s)_\alpha) \cdot (\dim_k H_\nn^v(B/J)_\beta)
\end{aligned}
\end{equation}

Via \Cref{adding}, \Cref{takayama}, and \Cref{cohomologyofsum},

%$$\dim_k \tilde{H}_p(\Delta_\alpha(I^i) \ast \Delta_\beta(J^j)) = \sum_{u + v = p} \dim_k \tilde{H}_u(\Delta_\alpha(I^i);k) \cdot \dim_k \tilde{H}_v(\Delta_\beta(J^j);k),$$ and

\begin{equation} \notag
\begin{aligned}
\dim_k \tilde{H}_p(\Delta_\alpha(I^i) \ast \Delta_\beta(J^j)) & = & & \dim_k \tilde{H}_p(\Delta_\gamma(I^i + J^j)) \\
& = & & \dim_k H_{\mm + \nn}^{p + |G_\gamma| + 1}(S/(I^i + J^j))_\gamma \\
& = & & \sum_{u + v = p + |G_\gamma| + 2} \dim_k H_\mm^u(A/I^i)_\alpha \cdot \dim_k H_\nn^v(B/J^j)_\beta
\end{aligned}
\end{equation}

so that the entries of the Mayer-Vietoris sequence, besides $\tilde{H}_p(\Delta_{i+1})$ and $\tilde{H}_p (\Delta_i)$, can be computed explicitly in terms of the modules $H_\mm^u(A/I^i)_\alpha$, $H_\mm^v(B/J^{s-i})_\beta$, and $H_\mm^v(B/J^{s-i+1})_\beta$. The Mayer-Vietoris sequence also yields the short exact sequence
$$0 \to \text{coker} f_\ast \to \tilde{H}_p(\Delta_i) \to \ker f_\ast \to 0,$$
where the first $f_\ast: \tilde{H}_p(\Delta_\alpha(I^i) \ast \Delta_\beta(J^{s-i})) \overset{f_\ast}{\to} \tilde{H}_p(\Delta_\alpha(I^i) \ast \Delta_\beta(J^{s-i+1})) \oplus \tilde{H}_p(\Delta_{i+1})$ and the second $f_\ast: \tilde{H}_{p-1}(\Delta_\alpha(I^i) \ast \Delta_\beta(J^{s-i})) \overset{f_\ast}{\to} \tilde{H}_{p-1}(\Delta_\alpha(I^i) \ast \Delta_\beta(J^{s-i+1})) \oplus \tilde{H}_{p-1}(\Delta_{i+1})$. With enough information about this induced homomorphism, $\tilde{H}_p(\Delta_i)$ could be computed in terms of the above local cohomology modules and $\tilde{H}_p(\Delta_{i+1})$. Because $\tilde{H}_p(\Delta_s)$ is computed as well, we have

\begin{equation} \notag
\begin{aligned}
\dim_k H_{\mm + \nn}^p(S/(I+J)^s)_\gamma & = & & \dim_k \tilde{H}_{p - |G_\gamma| - 1}(\Delta_\gamma((I+J)^s)) \\ & = & & \dim_k \tilde{H}_{p - |G_\gamma| - 1}(\Delta_1)
\end{aligned}
\end{equation}
can be computed in terms of these modules as well, given the necessary information about $f_\ast$.

The proof of the symbolic power case is identical to the ordinary power case, via the decomposition given by \Cref{symbolicsum}.

\end{proof}

%%%%%%%%%%%%%%%%%%%%%%%%%%%%%%%%%%%%%%%%%%%%%%%

\section{Degree Complexes of Powers of Fiber Products of Ideals} \label{sec.powerfiber}

In this section, we consider ordinary and symbolic powers of fiber products of squarefree monomial ideals. These ideals have the nice property that their degree complexes will split into a disjoint union of two complexes (when both are nonempty). This makes it possible to compute the reduced homology of the degree complex, and thus the cohomology of the $\ZZ^s$-graded quotient ring, straightforwardly.

\begin{definition} Given monomial ideals $I \subseteq A = k[x_1,\ldots,x_m]$ and $J \subseteq B = k[x_{m+1},\ldots,x_n]$ in different polynomial rings, the \textit{fiber product} of $I$ and $J$ is defined to be $I + J + \mm \nn$, where $\mm = (x_1,\ldots,x_m)$ and $\nn = (x_{m+1},\ldots,x_n)$.
\end{definition}

%\begin{observation} If $I$ and $J$ are edge ideals of the graphs $G$ and $H$ respectively, then $I + J + \mm \nn$ is the edge ideal of the \textit{join} of $G$ and $H$, defined as having the vertex set $V(G) \cup V(H)$ and the edge set $E(G) \cup E(H) \cup \{(x,y): x \in G,  y \in H\}$.\end{observation}

%\begin{notation} Let $\supp(\gamma) = \{i : \gamma_i \neq 0\}$.\end{notation}

In order to prove the decomposition of $\Delta_\gamma((I + J + \mm\nn)^s)$, I will first prove a few helpful lemmas.

The following lemma will help in computing $(I+J+\mm\nn)^s S_F$ when $F$ is nonempty.

\begin{lemma} \label{maximalideal}
Let $\mm = (x_1,\ldots,x_m)$. If $F \cap \{1,\ldots,m\} \neq \emptyset$, then $\mm^i S_F = S_F$ for any $i$.
\end{lemma}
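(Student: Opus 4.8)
The statement to prove is elementary: if some variable $x_j$ with $j \in F \cap \{1,\ldots,m\}$ is inverted in $S_F$, then $\mm^i S_F = S_F$.

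The plan is to show both inclusions, of which only $\supseteq$ needs argument. Since $j \in F \cap \{1,\ldots,m\}$, the variable $x_j$ is a generator of $\mm$, so $x_j^i \in \mm^i$. But $x_j$ is a unit in $S_F$ because $j \in F$, hence $x_j^i$ is a unit in $S_F$. Therefore $1 = x_j^i \cdot x_j^{-i} \in \mm^i S_F$, which forces $\mm^i S_F = S_F$. The reverse inclusion $\mm^i S_F \subseteq S_F$ is automatic since $\mm^i \subseteq S$.

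There is no real obstacle here — this is a one-line localization fact included for later reference. I will simply write it out cleanly, perhaps noting that the same argument shows $\nn^i S_F = S_F$ whenever $F \cap \{m+1,\ldots,n\} \neq \emptyset$.

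\begin{proof} Since $\mm^i \subseteq S$, we have $\mm^i S_F \subseteq S_F$. For the reverse inclusion, let $j \in F \cap \{1,\ldots,m\}$. Then $x_j \in \mm$, so $x_j^i \in \mm^i$. Because $j \in F$, the element $x_j$ is invertible in $S_F = S[x_\ell^{-1} : \ell \in F]$, so $x_j^{-i} \in S_F$. Hence
$$1 = x_j^i \cdot x_j^{-i} \in \mm^i S_F,$$
which gives $S_F = 1 \cdot S_F \subseteq \mm^i S_F$. Therefore $\mm^i S_F = S_F$.
\end{proof}
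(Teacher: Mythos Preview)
Your proof is correct and uses essentially the same idea as the paper: pick $j \in F \cap \{1,\ldots,m\}$, note $x_j^i \in \mm^i$ and $x_j^{-i} \in S_F$, and conclude. The only cosmetic difference is that the paper phrases this as the monomial-by-monomial equivalence $x^\gamma \in \mm^i S_F \Longleftrightarrow x^\gamma \in S_F$ via the earlier Lemma~\ref{freeexponents}, whereas you argue directly that $1 \in \mm^i S_F$.
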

\begin{proof}
Suppose $j \in F \cap \{1,\ldots,m\}$. Then because $x_j^i \in k[F]$, we have
\begin{equation} \notag
\begin{aligned}
x^\gamma \in \mm^i S_F & \Longleftrightarrow & & x_j^i x^\gamma \in \mm^i S_F & & (\text{Lemma } \ref{freeexponents}) \\
& \Longleftrightarrow & & x^\gamma \in S_F, \text{ because } x_j^i \in \mm^i
\end{aligned}
\end{equation}
\end{proof}

The following lemma shows that any face must be contained in either $\{1,\ldots,m\}$ or $\{m+1,\ldots,n\}$, which will create a disjoint union of degree complexes, each on one of these sets.

\begin{lemma} \label{fibprodlem1} 
Let $F \subseteq [n] $ and  $\gamma = (\alpha,\beta) \in \ZZ^m \times \ZZ^{n-m}.$ For $F \in \Delta_\gamma((I + J + \mm\nn)^s)$ or in $\Delta_\gamma((I+J+\mm\nn)^{(s)})$, we have $F \cup G_\gamma \subseteq \{1,\ldots,m\}$ or $F \cup G_\gamma \subseteq \{m+1,\ldots,n\}$.
\end{lemma}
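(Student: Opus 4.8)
The plan is to argue by contradiction: suppose $F \in \Delta_\gamma((I+J+\mm\nn)^s)$ but $F \cup G_\gamma$ meets both $X = \{1,\ldots,m\}$ and $Y = \{m+1,\ldots,n\}$. Then there exist indices $a \in (F \cup G_\gamma) \cap X$ and $b \in (F \cup G_\gamma) \cap Y$. The key point is that $x_a x_b \in \mm\nn \subseteq I + J + \mm\nn$, so $(x_a x_b)^s \in (I+J+\mm\nn)^s$. Since both $a$ and $b$ lie in $F \sqcup G_\gamma$, the monomial $x_a^s x_b^s$ is a unit in $S_{F \sqcup G_\gamma}$, so $x^\gamma = x^\gamma \cdot (x_a^s x_b^s)^{-1} \cdot (x_a x_b)^s \in (I+J+\mm\nn)^s S_{F \sqcup G_\gamma}$. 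This contradicts $F \in \Delta_\gamma((I+J+\mm\nn)^s)$. I would phrase this cleanly using \Cref{facelemma}: taking $x^\delta = x_a^s x_b^s \in k[F \sqcup G_\gamma]$, we have $x^\delta x^\gamma \in (I+J+\mm\nn)^s$ because $(x_a x_b)^s$ divides $x^\delta x^\gamma$ (after possibly using \Cref{removenegs} to reduce to $\gamma \in \NN^n$ so divisibility makes sense), hence $x^\gamma \in (I+J+\mm\nn)^s S_{F \sqcup G_\gamma}$.

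More carefully, the structure of the proof is: first apply \Cref{removenegs} to replace $\gamma$ by $\gamma' \in \NN^n$, so that $x^\gamma \in (I+J+\mm\nn)^s S_{F \sqcup G_\gamma}$ iff $x^{\gamma'} \in (I+J+\mm\nn)^s S_{F \sqcup G_\gamma}$. Then, assuming $a \in (F \sqcup G_\gamma) \cap X$ and $b \in (F \sqcup G_\gamma) \cap Y$, set $x^\delta := x_a^s x_b^s \in k[F \sqcup G_\gamma]$. The monomial $x^\delta x^{\gamma'}$ is divisible by $x_a^s x_b^s = (x_a x_b)^s$, and since $x_a x_b \in \mm\nn$, we get $(x_a x_b)^s \in (\mm\nn)^s \subseteq (I+J+\mm\nn)^s$, whence $x^\delta x^{\gamma'} \in (I+J+\mm\nn)^s$. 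By \Cref{facelemma}, $x^{\gamma'} \in (I+J+\mm\nn)^s S_{F \sqcup G_\gamma}$, contradicting $F \in \Delta_\gamma((I+J+\mm\nn)^s)$. Therefore $F \cup G_\gamma$ cannot meet both $X$ and $Y$, i.e., $F \cup G_\gamma \subseteq X$ or $F \cup G_\gamma \subseteq Y$.

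For the symbolic power case, the only change needed is to observe that $(I+J+\mm\nn)^{(s)} \supseteq (I+J+\mm\nn)^s$ (ordinary powers are contained in symbolic powers), so by \Cref{subsets}(1) we have $\Delta_\gamma((I+J+\mm\nn)^{(s)}) \subseteq \Delta_\gamma((I+J+\mm\nn)^s)$; hence any face of the symbolic degree complex is already a face of the ordinary one and the conclusion follows immediately from the ordinary case. Alternatively one can run the same direct argument, noting $(\mm\nn)^s \subseteq (I+J+\mm\nn)^s \subseteq (I+J+\mm\nn)^{(s)}$ so $(x_a x_b)^s \in (I+J+\mm\nn)^{(s)}$, which is all that was used.

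I do not expect any serious obstacle here — this is essentially a one-line observation once the right monomial $(x_a x_b)^s$ is identified and the localization lemmas (\Cref{facelemma}, \Cref{removenegs}, \Cref{freeexponents}) are invoked to make the divisibility argument rigorous. The mild subtlety is bookkeeping: making sure $a$ and $b$ genuinely can be chosen in $F \sqcup G_\gamma$ (not just in $F \cup G_\gamma$ as abstract sets), and handling the fact that $G_\gamma$ contributes negative exponents, which is exactly what \Cref{removenegs} cleans up. The containment $(\mm\nn)^s \subseteq (I+J+\mm\nn)^s$ should be stated explicitly since it is the crux.
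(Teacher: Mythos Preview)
Your proposal is correct and follows essentially the same approach as the paper: pick $i\in(F\cup G_\gamma)\cap\{1,\ldots,m\}$ and $j\in(F\cup G_\gamma)\cap\{m+1,\ldots,n\}$, observe that $(x_ix_j)^s\in(\mm\nn)^s\subseteq(I+J+\mm\nn)^s\subseteq(I+J+\mm\nn)^{(s)}$ and that $(x_ix_j)^{-s}\in S_{F\sqcup G_\gamma}$, and conclude $x^\gamma\in(I+J+\mm\nn)^s S_{F\sqcup G_\gamma}$. The paper writes this out directly in one line rather than routing through \Cref{facelemma} and \Cref{removenegs}, but the content is identical.
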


\begin{proof} Suppose  $\{i,j\} \subseteq F \cup G_\gamma$ for some $i \in \{1,\ldots,m\}$ and $j \in \{m+1,\ldots,n\}.$ Since $x_i x_j \in \mm\nn \subseteq (I + J + \mm\nn)$ and $(x_i x_j)^{-1} \in S_{F \cup G_\gamma}$, one can see that, for any $\gamma \in \ZZ^n$, $$x^\gamma = \prod_{k \not\in G_\gamma} x_k^{\gamma_k} (x_i x_j)^s \cdot (x_i x_j)^{-s} \prod_{k \in G_\gamma} x_k^{\gamma_k} \in (I + J + \mm\nn)^s S_{F \cup G_\gamma} \subseteq (I + J + \mm\nn)^{(s)} S_{F \cup G_\gamma}.$$ Thus $F \notin \Delta_\gamma((I + J + \mm\nn)^s)$ and $F \notin \Delta_\gamma((I+J+\mm\nn)^{(s)})$, for any $F$ for which $(F \cup G_\gamma)$ has elements in both $\{1,\ldots,m\}$ and $\{m+1,\ldots,n\}$. 
\end{proof}

The following lemma describes the degree complex obtained on each of the sets $\{1,\ldots,m\}$ and $\{m+1,\ldots,n\}$.

\begin{lemma} \label{fibprodlem2} 
Let $F \subseteq [n] $ and  $\gamma = (\alpha,\beta) \in \ZZ^m \times \ZZ^{n-m}.$ Suppose $F \cup G_\gamma \neq \emptyset$. 
\begin{enumerate}
\item If $F \cup G_\gamma \subseteq \{1,\ldots,m\},$ then 
$$F \in \Delta_\gamma((I+J+\mm\nn)^s) \iff F \in \Delta_\alpha(I^{s-|\beta|}).$$ 
\item If $F \cup G_\gamma \subseteq \{m+1,\ldots,n\}$, then 
$$F \in \Delta_\gamma((I+J+\mm\nn)^s) \iff F \in \Delta_\beta(J^{s-|\alpha|}).$$
\end{enumerate}

\end{lemma}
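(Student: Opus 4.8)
The plan is to prove part (1); part (2) then follows by the symmetric argument interchanging $(A, I, \alpha)$ with $(B, J, \beta)$. Write $K := I + J + \mm\nn$. The starting point is the algebraic identity
\[ K^s = \sum_{a+b+c=s} I^a J^b (\mm\nn)^c = \sum_{a+b+c=s} (I^a\mm^c)(J^b\nn^c), \]
which holds because ideal multiplication distributes over sums, $(\mm\nn)^c = \mm^c\nn^c$, and $I,\mm\subseteq A$ while $J,\nn\subseteq B$ lie in disjoint sets of variables. I will combine this with two routine facts: (i) the finitely-many-summands version of \Cref{additionsplitlemma}, so that $x^\gamma\in K^sS_{F\sqcup G_\gamma}$ iff $x^\gamma\in(I^a\mm^c)(J^b\nn^c)S_{F\sqcup G_\gamma}$ for some $a+b+c=s$; and (ii) the monomial form of ``a product of ideals in disjoint variables'', namely that for $\eta\in\NN^n$ with $A$-part $\eta_A$ and $B$-part $\eta_B$, one has $x^\eta\in(I^a\mm^c)(J^b\nn^c)$ in $S$ iff $x^{\eta_A}\in I^a\mm^c$ in $A$ and $x^{\eta_B}\in J^b\nn^c$ in $B$. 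By \Cref{removenegs} I may assume $\gamma\geq\ob$ (replacing $\gamma$ by $\gamma'$ changes neither side of the desired equivalence); in case (1) this makes the $Y$-block of $\gamma$ equal to $\beta\in\NN^{n-m}$.

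Next comes the key reduction. In case (1), $F\sqcup G_\gamma = F\cup G_\gamma$ is a nonempty subset of $\{1,\dots,m\}$, so $(F\sqcup G_\gamma)\cap\{1,\dots,m\}\neq\emptyset$ and \Cref{maximalideal} gives $\mm^cS_{F\sqcup G_\gamma} = S_{F\sqcup G_\gamma}$ for every $c$, whence $(I^a\mm^c)S_{F\sqcup G_\gamma} = I^aS_{F\sqcup G_\gamma}$. Applying \Cref{facelemma} to the membership $x^\gamma\in(I^a\mm^c)(J^b\nn^c)S_{F\sqcup G_\gamma}$: a witness monomial $x^\delta\in k[F\cup G_\gamma]$ lies in $A$, so it alters only the $A$-block of $x^\delta x^\gamma$, leaving the $B$-block equal to $x^\beta$. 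Feeding this into (ii), using $(I^a\mm^c)S_{F\sqcup G_\gamma} = I^aS_{F\sqcup G_\gamma}$ on the $A$-side and the fact that no $y$-variable is inverted on the $B$-side, I expect to arrive at
\[ F\notin\Delta_\gamma(K^s) \iff \exists\, a,b,c\geq 0 \text{ with } a+b+c=s:\ F\notin\Delta_\alpha(I^a) \text{ and } x^\beta\in J^b\nn^c. \]

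It remains to optimize over $(a,b,c)$. If $|\beta|\geq s$, then $a=b=0,\ c=s$ works, because $x^\beta\in\nn^s$ (as $\deg x^\beta = |\beta|\geq s$) and $F\notin\Delta_\alpha(I^0)=\Delta_\alpha(1)$, the void complex; thus $F\notin\Delta_\gamma(K^s)$, matching the convention that $\Delta_\alpha(I^{s-|\beta|})$ has no faces in this range. If $|\beta|<s$: for ``$\Leftarrow$'', given $F\notin\Delta_\alpha(I^{s-|\beta|})$, take $a=s-|\beta|\geq 1,\ b=0,\ c=|\beta|$, noting $x^\beta\in\nn^{|\beta|}$; for ``$\Rightarrow$'', from $x^\beta\in J^b\nn^c\subseteq\nn^{b+c}$ (here $J\subseteq\nn$) we get $|\beta|\geq b+c$, hence $a = s-b-c\geq s-|\beta|\geq 1$, so $I^a\subseteq I^{s-|\beta|}$ and by \Cref{subsets} $\Delta_\alpha(I^{s-|\beta|})\subseteq\Delta_\alpha(I^a)$, so $F\notin\Delta_\alpha(I^a)$ forces $F\notin\Delta_\alpha(I^{s-|\beta|})$. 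Negating the biconditional yields $F\in\Delta_\gamma(K^s)\iff F\in\Delta_\alpha(I^{s-|\beta|})$, as desired. The step I expect to be the main obstacle is precisely the key reduction: making rigorous the passage from $S_{F\sqcup G_\gamma}$ to $A_{F\cup G_\gamma}$ on the $A$-side (collapsing every power of $\mm$ via \Cref{maximalideal}), verifying that the $Y$-coordinates of the relevant exponent are forced to be $\beta\geq\ob$, and checking that ``$x^\beta\in J^b\nn^cS_{F\sqcup G_\gamma}$'' genuinely simplifies to ``$x^\beta\in J^b\nn^c$ in $B$'' because none of the $y$-variables are inverted. Once the displayed equivalence is in hand, the optimization over $(a,b,c)$ is short and uses only \Cref{subsets} and the estimate $J^b\nn^c\subseteq\nn^{b+c}$.
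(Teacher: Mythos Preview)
Your argument is correct and shares the essential ingredients with the paper's proof: collapse $\mm^c$ via \Cref{maximalideal} since $F\sqcup G_\gamma$ meets $\{1,\dots,m\}$, separate the $A$- and $B$-contributions using that $I,\mm$ and $J,\nn$ live in disjoint variable sets, and then compare the $B$-degree against $|\beta|$. The paper organizes these steps differently: instead of expanding $K^s$ as the triple sum $\sum_{a+b+c=s}(I^a\mm^c)(J^b\nn^c)$ and optimizing over $(a,b,c)$ at the end, it first simplifies the localized ideal itself, using $J+\nn=\nn$ together with \Cref{maximalideal} to obtain $K^sS_{F\sqcup G_\gamma}=(I+\nn)^sS_{F\sqcup G_\gamma}=\sum_{i=0}^s(I^{s-i}S_{F\sqcup G_\alpha})\cap\nn^i$. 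This absorbs $J$ immediately, so only a single index $i$ appears and the condition reduces directly to $x^\alpha\notin I^{s-|\beta|}S_{F\sqcup G_\alpha}$. Your route defers the use of $J\subseteq\nn$ to the optimization step (via $J^b\nn^c\subseteq\nn^{b+c}$), which is equally valid but carries a bit more bookkeeping.

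One minor remark: the invocation of \Cref{removenegs} is unnecessary and slightly misleading as phrased. The hypothesis $F\cup G_\gamma\subseteq\{1,\dots,m\}$ already forces $\beta\in\NN^{n-m}$, and the $A$-side is handled directly by \Cref{facelemma} with witnesses in $k[F\sqcup G_\gamma]$; there is no need to make $\alpha$ nonnegative. Read literally, ``assume $\gamma\geq\ob$'' would replace the localization set $F\sqcup G_\gamma$ by $F$ and thereby drop the case $F=\emptyset$, $G_\gamma\neq\emptyset$. Your continued use of $F\sqcup G_\gamma$ later shows you do not actually do this, so the argument stands once that sentence is removed or clarified.
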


\begin{proof} It suffices to prove (1), as (2) follows similarly from symmetry. Suppose $F \cup G_\gamma \subseteq \{1,\ldots,m\}.$ It follows immediately that $G_\gamma = G_\alpha$ and $\beta_j \geq 0$ for each $j$.

Because $F \cup G_\alpha \neq \emptyset$, we have $\mm^i S_{F \cup G_\alpha} = S_{F \cup G_\alpha}$ by \Cref{maximalideal}. Furthermore, because $J \subseteq \nn$, we have $J + \nn = \nn$. Therefore

\begin{equation} \notag
\begin{aligned}
(I + J + \mm\nn)^s S_{F \cup G_\gamma} & = & & \sum_{i=0}^s (I + J)^{s-i} (\mm\nn)^i S_{F \cup G_\alpha} \\
& = & & \sum_{i=0}^s (I + J)^{s-i} \nn^i S_{F \cup G_\alpha} \\
& = & & (I + J + \nn)^s S_{F \cup G_\alpha} \\
& = & & (I + \nn)^s S_{F \cup G_\alpha} \\
& = & & \sum_{i=0}^s I^{s-i} S_{F \cup G_\alpha} \cdot \nn^i S_{F \cup G_\alpha} \\
& = & & \sum_{i=0}^s (I^{s-i} S_{F \cup G_\alpha} \cap \nn^i)
\end{aligned}
\end{equation}

In this case,
\begin{equation} \notag
\begin{aligned}
F \in \Delta_\gamma((I + J + \mm\nn)^s) & \Longleftrightarrow & & x^\gamma \not\in (I + J + \mm\nn)^s S_{F \cup G_\gamma} \\
& \Longleftrightarrow & & x^\gamma \not\in \sum_{i=0}^s (I^{s-i} S_{F \cup G_\alpha} \cap \nn^i) \\
& \Longleftrightarrow & & \text{for each } i = 0,\ldots,s, \text{ either } x^\alpha \not\in I^{s-i} S_{F \cup G_\alpha} \text{ or }  x^\beta \not\in \nn^i
\end{aligned}
\end{equation}

Observe that, when $G_\beta \neq 0$, $x^\beta \not\in \nn^i $ is equivalent to $ i > |\beta|$. If $|\beta| \geq s$, then $x^\beta \in \nn^i$ for all $i = 0,\ldots,s$, and so we have $F$ cannot be in $\Delta_\gamma((I+J+\mm\nn)^s)$. When $x^\beta \not\in \nn^i,$ we have $I^s \subseteq \cdots \subseteq I^{s-|\beta|}$, which yields $\sum_{i=0}^{|\beta|} I^{s-i} = I^{s - |\beta|}.$ Thus we can continue to simplify the last statement in the above sequence of equivalences as follows.

\begin{equation} \notag
\begin{aligned}
& \Longleftrightarrow & & x^\alpha \not\in \sum_{i=0}^{|\beta|} I^{s-i} S_{F \cup G_\alpha} \text{ and } |\beta| < s\\
& \Longleftrightarrow & & x^\alpha \not\in I^{s-|\beta|} S_{F \cup G_\alpha} \text{ and } |\beta| < s \\
& \Longleftrightarrow & & F \in \Delta_\alpha(I^{s-|\beta|}) \text{ and } |\beta| < s
\end{aligned}
\end{equation}

Likewise, if $F \cup G_\gamma \subseteq \{m+1,\ldots,n\}$, then $$F \in \Delta_\gamma((I + J + \mm\nn)^s) \Longleftrightarrow F \in \Delta_\beta(J^{s-|\alpha|}) \text{ and } |\alpha| < s.$$
\end{proof}

With this proved, we may now state the degree complex decomposition of the power of the fiber product.

\begin{theorem} \label{ordinaryfiberproduct} Let $I \subset A$ and $J \subset B$ be squarefree monomial ideals, $\gamma = (\alpha,\beta) \in \ZZ^m \times \ZZ^{n-m}$, and $s \geq 1$. Let
$$\mathcal{A} = \begin{cases} \{\emptyset \subsetneq F \in \Delta_\alpha(I^{s-|\beta|})\}, & G_\beta = \emptyset, |\beta| < s \\ \emptyset, & G_\beta \neq \emptyset \text{ or } |\beta| \geq s \end{cases} \text{ be nonempty faces of } \Delta_\alpha(I^{s-|\beta|});$$ $$\mathcal{B} = \begin{cases} \{\emptyset \subsetneq F \in \Delta_\beta(J^{s-|\alpha|})\}, & G_\alpha = \emptyset, |\alpha| < s \\ \emptyset, & G_\alpha \neq \emptyset \text{ or } |\alpha| \geq s \end{cases} \text{ be nonempty faces of } \Delta_\beta(J^{s-|\alpha|}).$$ Then the nonempty faces of $\Delta_\gamma((I+J+\mm\nn)^s)$ are given by $$\{\emptyset \subsetneq F \in \Delta_\gamma((I+J+\mm\nn)^s)\} = \mathcal{A} \sqcup \mathcal{B}.$$

%Assuming that at least one among $\Delta_\gamma((I+J+\mm\nn)^s)$, $\Delta_\alpha(I^{s-|\beta|})$, and $\Delta_\beta(J^{s-|\alpha|})$ is neither the void complex nor the irrelevant complex, then

%$$\Delta_\gamma((I+J+\mm\nn)^s) = \begin{cases} \Delta_\alpha(I^{s-|\beta|}) \cup \Delta_\beta(J^{s-|\alpha|}), & G_\gamma = \emptyset, |\alpha|,|\beta| < s \\
%	\Delta_\alpha(I^{s-|\beta|}), & (G_\alpha \neq \emptyset \text{ or } |\alpha| \geq s), G_\beta = \emptyset, |\beta| < s \\
%	\Delta_\beta(J^{s-|\alpha|}), & (G_\beta \neq \emptyset \text{ or } |\beta| \geq s), G_\alpha = \emptyset, |\alpha| < s \\
%	\emptyset, & \text{otherwise.} \end{cases}$$
	
% either both $\Delta_\gamma((I + J + \mm\nn)^s)$, $\Delta_\alpha(I^{s-|\beta|}) \cup \Delta_\beta(J^{s-|\alpha|})$ are 

%the degree complex of the power of the fiber product is given by $$\Delta_\gamma((I+J+\mm\nn)^s) = \Delta_\alpha(I^{s-|\beta|}) \cup \Delta_\beta(J^{s-|\alpha|}),$$

%as long as one of $\Delta_\gamma((I+J+\mm\nn)^s)$, $\Delta_\alpha(I^{s-|\beta|})$, and $\Delta_\beta(J^{s-|\alpha|})$ has a nonempty face, and where $\Delta_\alpha(I^{s - |\beta|}) = \{\}$ if $G_\beta \neq \emptyset$ or if $|\beta| \geq s$ and $\Delta_\beta(J^{s-|\alpha|}) = \{\}$ if $G_\alpha \neq \emptyset$ or if $|\alpha| \geq s$.
\end{theorem}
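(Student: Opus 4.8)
The plan is to assemble the three preceding lemmas into a short case analysis; essentially all of the genuine ideal-theoretic work has already been carried out in \Cref{fibprodlem1} and \Cref{fibprodlem2}, and what remains is bookkeeping. Fix a face $\emptyset \subsetneq F \subseteq [n] \setminus G_\gamma$. Since $F \neq \emptyset$ we have $F \cup G_\gamma \neq \emptyset$, so \Cref{fibprodlem1} applies and we are in exactly one of the two mutually exclusive cases $F \cup G_\gamma \subseteq \{1,\ldots,m\}$ or $F \cup G_\gamma \subseteq \{m+1,\ldots,n\}$.

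First I would prove the forward inclusion. Suppose $\emptyset \subsetneq F \in \Delta_\gamma((I+J+\mm\nn)^s)$. If $F \cup G_\gamma \subseteq \{1,\ldots,m\}$, then $G_\gamma \subseteq \{1,\ldots,m\}$ forces $G_\beta = \emptyset$, and the equivalence established in the proof of \Cref{fibprodlem2}(1) says that membership of $F$ in $\Delta_\gamma((I+J+\mm\nn)^s)$ is equivalent to $F \in \Delta_\alpha(I^{s-|\beta|})$ together with $|\beta| < s$ — which is precisely the defining condition for $F \in \mathcal{A}$. Symmetrically, $F \cup G_\gamma \subseteq \{m+1,\ldots,n\}$ forces $G_\alpha = \emptyset$ and, via \Cref{fibprodlem2}(2), $|\alpha| < s$ and $F \in \Delta_\beta(J^{s-|\alpha|})$, i.e. $F \in \mathcal{B}$. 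Hence every nonempty face of $\Delta_\gamma((I+J+\mm\nn)^s)$ lies in $\mathcal{A} \cup \mathcal{B}$.

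For the reverse inclusion, take $\emptyset \subsetneq F \in \mathcal{A}$: by definition $G_\beta = \emptyset$, $|\beta| < s$, and $F \in \Delta_\alpha(I^{s-|\beta|})$. By the convention fixed in \Cref{convention}, $\Delta_\alpha(I^{s-|\beta|}) \subseteq \Delta_X$, so $F \subseteq \{1,\ldots,m\}$; and since $G_\beta = \emptyset$ we have $G_\gamma = G_\alpha \subseteq \{1,\ldots,m\}$, so $F \cup G_\gamma$ is a nonempty subset of $\{1,\ldots,m\}$. Then \Cref{fibprodlem2}(1) (with $|\beta| < s$ already in hand) gives $F \in \Delta_\gamma((I+J+\mm\nn)^s)$; the argument for $F \in \mathcal{B}$ is identical after swapping the roles of $A$ and $B$. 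Finally, $\mathcal{A}$ and $\mathcal{B}$ are disjoint, since any common member would be a nonempty subset of both $\{1,\ldots,m\}$ and $\{m+1,\ldots,n\}$; this upgrades the union to the disjoint union $\mathcal{A} \sqcup \mathcal{B}$.

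The only point requiring a little care — and the closest thing to an obstacle — is keeping the side conditions straight: matching the hypotheses "$G_\beta = \emptyset$, $|\beta| < s$" (resp. their $\alpha$-analogues) that define $\mathcal{A}$ and $\mathcal{B}$ against what the dichotomy of \Cref{fibprodlem1} and the refined equivalences inside the proof of \Cref{fibprodlem2} actually deliver, and checking that in the degenerate situations ($|\beta| \geq s$, or $G_\gamma$ meeting both $\{1,\ldots,m\}$ and $\{m+1,\ldots,n\}$) the corresponding set is empty on both sides of the claimed identity, so that nothing is lost. Since \Cref{fibprodlem2} already performs the substantive localization computation $(I+J+\mm\nn)^s S_{F\cup G_\gamma} = \sum_{i=0}^{s}(I^{s-i}S_{F\cup G_\alpha}\cap\nn^i)$, no additional computation is needed here.
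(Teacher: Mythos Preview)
Your proposal is correct and follows essentially the same approach as the paper: both proofs assemble \Cref{fibprodlem1} and \Cref{fibprodlem2} into the obvious two-case analysis, first showing that a nonempty face of $\Delta_\gamma((I+J+\mm\nn)^s)$ lands in $\mathcal{A}$ or $\mathcal{B}$ according to whether $F\cup G_\gamma$ sits in $\{1,\dots,m\}$ or $\{m+1,\dots,n\}$, and then reversing via \Cref{fibprodlem2}. Your write-up is slightly more explicit than the paper's in two respects --- you spell out why $\mathcal{A}\cap\mathcal{B}=\emptyset$, and you note that the side condition $|\beta|<s$ is extracted from the \emph{proof} of \Cref{fibprodlem2} rather than its bare statement --- but these are expository refinements, not a different argument.
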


\begin{proof} 
Suppose $F \in \Delta_\gamma((I+J+\mm\nn)^s)$ and that $F \sqcup G_\gamma \neq \emptyset$. It follows from \Cref{fibprodlem1} that either $F \sqcup G_\gamma \subseteq \{1,\ldots,m\}$ or $F \sqcup G_\gamma \subseteq \{m+1,\ldots,n\}$. If $F \sqcup G_\gamma \subseteq \{1,\ldots,m\}$, then $F \sqcup G_\gamma = F \sqcup G_\alpha$, and by \Cref{fibprodlem2}, $F \in \Delta_\alpha(I^{s-|\beta|})$, $|\beta| < s$ and $G_\beta = \emptyset$. Similarly, if $F \sqcup G_\gamma \subseteq \{m+1,\ldots,n\}$, then $F \sqcup G_\gamma = F \sqcup G_\beta$, and by \Cref{fibprodlem2}, $F \in \Delta_\beta(J^{s-|\alpha|})$, $|\alpha| < s$, and $G_\alpha = \emptyset$. %Observe further that, if $G_\beta \neq \emptyset$, then $F \sqcup G_\gamma \not\in \Delta_\alpha(I^{s-|\beta|})$ for any $F$, so that $\Delta_\alpha(I^{s-|\beta|}) = \{\}$. Likewise, if $G_\alpha \neq \emptyset$, then $\Delta_\beta(J^{s-|\alpha|}) = \{\}$. 
\\
On the other hand, if $\emptyset \neq F \in \Delta_\alpha(I^{s-|\beta|})$, then $G_\beta = \emptyset$ and $|\beta| < s$. Then $F \sqcup G_\gamma \subseteq \{1,\ldots,m\}$. so that $F \sqcup G_\alpha = F \sqcup G_\gamma$. By \Cref{fibprodlem2}, $F \in \Delta_\gamma((I+J+\mm\nn)^s)$. Likewise, if $F \in \Delta_\beta(J^{s-|\alpha|})$, then $G_\alpha = \emptyset$, $|\alpha| < s$, and $F \in \Delta_\gamma((I+J+\mm\nn)^s)$.
\end{proof}

%\JOcomment{New stuff starts here.}

\begin{observation} It is possible that $\Delta_\gamma((I+J+\mm\nn)^s)$ is the irrelevant complex and $\Delta_\alpha(I^{s-|\beta|}) \cup \Delta_\beta(J^{s-|\alpha|})$ is the void complex. For instance, let $I = (x_1 x_2) \subset k[x_1, x_2]$, $J = (x_3 x_4) \subset k[x_3, x_4]$, $\mm\nn = (x_1 x_3, x_1 x_4, x_2 x_3, x_3 x_4)$, and $\gamma = (\alpha,\beta) = (1,1,1,1)$. Because $x^\gamma = x_1 x_2 x_3 x_4 \not\in (I + J + \mm\nn)^3$, it follows that $\emptyset \in \Delta_\gamma((I+J+\mm\nn)^3)$. However, because $x^\alpha = x_1 x_2 \in I = I^{3 - |\beta|}$ and $x^\beta = x_3 x_4 \in J = J^{3 - |\alpha|}$, it follows that $\emptyset \not\in \Delta_\alpha(I^{s-|\beta|}) \cup \Delta_\beta(J^{s-|\alpha|})$. Therefore this theorem cannot be simplified to saying that the two complexes are equal.

% Even so, because the void complex and the irrelevant complex have the same reduced homology, we have equality between their respective reduced homologies.
\end{observation}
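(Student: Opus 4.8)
The statement is an existence claim, so the plan is to exhibit one triple $(I,J,s)$ and one degree $\gamma=(\alpha,\beta)$ witnessing the inequality, and the natural place to hunt for a discrepancy between the two complexes is the empty face. By \Cref{degcpx}, $\emptyset\in\Delta_\gamma(K)$ iff $x^\gamma\notin K$, and a simplicial complex is void exactly when it fails to contain $\emptyset$; so, assuming $G_\gamma=\emptyset$, I want $x^\alpha\in I^{s-|\beta|}$ and $x^\beta\in J^{s-|\alpha|}$ — which forces both $\Delta_\alpha(I^{s-|\beta|})$ and $\Delta_\beta(J^{s-|\alpha|})$, hence their union, to be void — while simultaneously $x^\gamma\notin(I+J+\mm\nn)^s$, which puts $\emptyset$ into $\Delta_\gamma((I+J+\mm\nn)^s)$. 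The cheapest way to guarantee the last non-membership is a degree count: if every minimal generator of $I+J+\mm\nn$ has degree $2$, then $(I+J+\mm\nn)^s$ lives in degrees $\ge 2s$, so it suffices to arrange $|\alpha|+|\beta|<2s$. Accordingly I would take $I$ and $J$ principal, each generated by a single squarefree quadratic monomial, let $\alpha$ and $\beta$ be the exponent vectors of those generators, and pick $s$ just large enough that $s-|\beta|=s-|\alpha|=1$.

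Carrying this out, I would take $I=(x_1x_2)\subset k[x_1,x_2]$, $J=(x_3x_4)\subset k[x_3,x_4]$, $s=3$, and $\gamma=(\alpha,\beta)=(1,1,1,1)$, so that $G_\alpha=G_\beta=\emptyset$, $|\alpha|=|\beta|=2<3=s$, and $s-|\beta|=s-|\alpha|=1$. Then I would observe: (i) $x^\alpha=x_1x_2\in I$ and $x^\beta=x_3x_4\in J$, so neither $\Delta_\alpha(I)$ nor $\Delta_\beta(J)$ contains $\emptyset$, hence both are void and so is their union; (ii) by \Cref{ordinaryfiberproduct} (we are in the first case of both $\mathcal{A}$ and $\mathcal{B}$) the nonempty faces of $\Delta_\gamma((I+J+\mm\nn)^3)$ are exactly the nonempty faces of $\Delta_\alpha(I)$ together with those of $\Delta_\beta(J)$, which are none; and (iii) $x^\gamma=x_1x_2x_3x_4$ has degree $4<6$, whereas $(I+J+\mm\nn)^3$ is generated in degree $6$, so $x^\gamma\notin(I+J+\mm\nn)^3$ and therefore $\emptyset\in\Delta_\gamma((I+J+\mm\nn)^3)$. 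Combining (ii) and (iii), $\Delta_\gamma((I+J+\mm\nn)^3)$ is the irrelevant complex, whereas by (i) the union $\Delta_\alpha(I^{s-|\beta|})\cup\Delta_\beta(J^{s-|\alpha|})$ is void, so the two are not equal, and \Cref{ordinaryfiberproduct} cannot be strengthened to an equality of complexes.

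I do not expect a genuine obstacle here; the only step that requires any care is checking that $\Delta_\gamma((I+J+\mm\nn)^3)$ has no nonempty faces, and that is precisely what \Cref{ordinaryfiberproduct} delivers once $\Delta_\alpha(I)$ and $\Delta_\beta(J)$ are seen to be void. The conceptual point the example isolates is that $x^{(\alpha,\beta)}$ can fail to lie in $(I+J+\mm\nn)^s$ for a reason the complexes $\Delta_\alpha(I^{s-|\beta|})$ and $\Delta_\beta(J^{s-|\alpha|})$ cannot see — here $x_1x_2x_3x_4$ is a product of only two generators of the fiber product rather than $s=3$ — so the empty face genuinely has to be treated separately, which is exactly why the theorem is phrased in terms of nonempty faces only.
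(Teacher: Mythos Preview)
Your proposal is correct and follows the same approach as the paper: the identical example $I=(x_1x_2)$, $J=(x_3x_4)$, $s=3$, $\gamma=(1,1,1,1)$ is used, with the same two verifications ($x^\gamma\notin(I+J+\mm\nn)^3$ and $x^\alpha\in I$, $x^\beta\in J$). You supply a bit more detail than the paper does---the degree-count argument for $x^\gamma\notin(I+J+\mm\nn)^3$ and the explicit appeal to \Cref{ordinaryfiberproduct} to rule out nonempty faces---but the substance is the same.
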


Now, let us examine $(I + J + \mm\nn)^{(s)}$. Recall that $$(I + J + \mm\nn)^{(s)} = \bigcap_{\pp \in \Ass(I + J + \mm\nn)} \pp^s.$$ In order to examine $(I + J + \mm\nn)^{(s)}$, we should then determine these associated primes.

\begin{lemma} \label{assprimesSFP} Let $I \subset A$ and $J \subset B$ be squarefree monomial ideals, $\mm = (x_1,\ldots,x_m)$, and $\nn = (x_{m+1},\ldots,x_n)$. Then $$\Ass(I + J + \mm\nn) = \{\qq + \nn: \qq \in \Ass(I)\} \cup \{\mm + \rr: \rr \in \Ass(J)\}.$$
\end{lemma}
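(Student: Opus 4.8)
The plan is to compute the associated primes of $I + J + \mm\nn$ by identifying this ideal combinatorially. Since $I$ and $J$ are squarefree monomial ideals, so is $I + J + \mm\nn$, and its minimal primes are exactly its associated primes; hence it suffices to describe the minimal primes. First I would note that $I + J + \mm\nn = \sqrt{I+J+\mm\nn}$ and use the standard fact that the minimal primes of a squarefree monomial ideal correspond to the minimal vertex covers of its facet/generator hypergraph. Viewing $I$, $J$ as edge ideals (or more generally hypergraph ideals) on the disjoint vertex sets $X$ and $Y$, the ideal $I + J + \mm\nn$ is the edge ideal of the join: its generators are those of $I$, those of $J$, and all products $x_ix_j$ with $i \in X$, $j \in Y$.

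The key step is then a purely combinatorial claim: a subset $C \subseteq [n]$ is a minimal vertex cover of the generating hypergraph of $I + J + \mm\nn$ if and only if $C = C_A \cup Y$ with $C_A$ a minimal vertex cover of the hypergraph of $I$, or $C = X \cup C_B$ with $C_B$ a minimal vertex cover of the hypergraph of $J$. I would prove this directly: if $C$ fails to contain all of $X$, pick $i \in X \setminus C$; then every edge $x_ix_j$ ($j \in Y$) forces $j \in C$, so $Y \subseteq C$; and since $Y$ covers every generator except those of $I$, minimality of $C$ forces $C \cap X$ to be a minimal vertex cover of the hypergraph of $I$. The symmetric argument handles the case $X \subseteq C$, and one checks that both families consist of genuine (not just minimal-within-a-family) minimal vertex covers, and that the two families are disjoint when $X, Y \neq \emptyset$ (as is implicit). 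Translating vertex covers back to primes, $C_A \cup Y$ corresponds to $\qq + \nn$ with $\qq \in \Min(I) = \Ass(I)$, and $X \cup C_B$ corresponds to $\mm + \rr$ with $\rr \in \Ass(J)$, which is exactly the claimed formula.

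Alternatively, and perhaps more cleanly given the paper's framework, I would argue via primary decomposition: $I + J + \mm\nn = (I + \nn) \cap (\mm + J)$, since any monomial in the intersection of $I+\nn$ and $\mm+J$ either is divisible by a generator of $I$, or a generator of $J$, or by one variable from $\mm$ and one from $\nn$ — hence lies in $I + J + \mm\nn$; the reverse containment is immediate. Then $\Ass(I + J + \mm\nn) = \Ass(I + \nn) \cup \Ass(\mm + J)$ (no embedded primes appear since everything is squarefree), and since $I \subset A$ and $\nn \subset B$ share no variables, $\Ass(I + \nn) = \{\qq + \nn : \qq \in \Ass(I)\}$ and likewise $\Ass(\mm + J) = \{\mm + \rr : \rr \in \Ass(J)\}$.

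The main obstacle is the verification of the identity $I + J + \mm\nn = (I+\nn)\cap(\mm+J)$ at the level of monomials and, in the vertex-cover approach, being careful that the minimal vertex covers of the join hypergraph are \emph{exactly} the two listed families with no extras and no redundancies — in particular ruling out covers that contain all of $X$ \emph{and} all of $Y$ as non-minimal unless $I$ or $J$ is the unit ideal, and confirming that a minimal vertex cover of the hypergraph of $I$ together with $Y$ really is minimal in the join (removing any vertex of $Y$ uncovers an edge to the uncovered vertex of $X$, and removing a vertex of $C_A$ uncovers an edge of $I$). These are routine but need to be stated precisely; once the combinatorial dictionary is in place the prime computation is immediate.
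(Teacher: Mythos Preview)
Your proposal is correct, and both routes you sketch are valid. The paper, however, takes a somewhat different path: it argues directly via annihilators. For one inclusion it shows that if $\qq = \Ann_{A/I}(M)$ for a monomial $M \in A$, then $\qq + \nn = \Ann_{S/(I+J+\mm\nn)}(M)$ (using that $x_jM \in \mm\nn$ for every $j > m$). For the reverse inclusion it takes $\pp = \Ann_{S/(I+J+\mm\nn)}(M)$, observes that $\supp(M)$ must lie entirely in $\{1,\ldots,m\}$ or entirely in $\{m+1,\ldots,n\}$ (else something in $\mm\nn$ divides $M$), and then reads off $\pp \cap \{x_1,\ldots,x_m\}$ and $\pp \cap \{x_{m+1},\ldots,x_n\}$ separately.

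Your vertex-cover argument is essentially the Stanley--Reisner translation of the same combinatorics; indeed, a lemma of exactly this form appears in the paper's source but is commented out. Your second approach via the identity $I + J + \mm\nn = (I+\nn)\cap(\mm+J)$ is genuinely different and arguably the cleanest of the three: it reduces the lemma to the trivial computation of $\Ass(I+\nn)$ for ideals in disjoint variable sets, and it sidesteps any direct manipulation of annihilating monomials. The paper's annihilator argument, on the other hand, has the virtue of being self-contained and not relying on the minimal-prime/vertex-cover dictionary or on verifying an auxiliary ideal identity.
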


\begin{proof} For $\qq \in \Ass(I)$, let $M \in A$ be the monomial for which $\qq$ is the annihilator. Then because $x_i x_j \in I$ for all $x_i \in A$ and $x_j \in B$, it follows that $x_j M \in \mm\nn \subset I + J + \mm\nn$ for all $x_j \in B$. Therefore $(\qq + \nn) = \text{Ann}_{I + J + \mm\nn}(M)$, so that $\qq + \nn \in \Ass(I + J + \mm\nn)$. Likewise, for any $\rr \in \Ass(J)$, it follows that $\mm + \rr = \text{Ann}_{I + J + \mm\nn}(M)$, and so $\mm + \rr \in \Ass(I + J + \mm\nn)$.

On the other hand, for $\pp \in \Ass(I + J + \mm\nn)$, if $0 \neq M \in S/(I + J + \mm\nn)$ is the monomial that $\pp$ annihilates, $\supp(M) \subseteq \{1,\ldots,m\}$ or $\supp(M) \subseteq \{m+1,\ldots,n\}$ (as otherwise something in $\mm\nn$ divides $M$). If $\supp(M) \subseteq \{1,\ldots,m\}$, then

\begin{equation} \notag
\begin{aligned}
\pp \cap \{x_1,\ldots,x_m\} & = & & \{x_j : x_j M \in I + J + \mm\nn\} \cap \{x_1,\ldots,x_m\} \\
& = & & \{x_j : x_j M \in I\} \in \Ass(I)
\end{aligned}
\end{equation} 

and

\begin{equation} \notag
\begin{aligned}
\pp \cap \{x_{m+1},\ldots,x_n\} & = & & \{x_j : x_j M \in I + J + \mm\nn\} \cap \{x_{m+1},\ldots,x_n\} \\
& = & & \{x_{m+1},\ldots,x_n\} = \nn,
\end{aligned}
\end{equation}

so that $\pp = \qq + \nn$ for some $\qq \in \Ass(I)$. Likewise, if $\supp(M) \subseteq \{m+1,\ldots,n\}$, then $\pp = \mm + \rr$ for some $\rr \in \Ass(J)$. This completes the proof.
\end{proof}

With this description of the associated primes, we may work with the symbolic power. As before, the degree complex is a disjoint union of degree complexes on $\{1,\ldots,m\}$ and $\{m+1,\ldots,n\}$. The following lemma describes each of these degree complexes.

\begin{lemma} \label{reducingsymbolic} Let $I \subset A$ and $J \subset B$ be squarefree monomial ideals, $S = A \otimes_k B$, and $\gamma = (\alpha,\beta) \in \ZZ^m \times \ZZ^{n-m}$.
\begin{enumerate}
\item If $\emptyset \neq F \sqcup G_\gamma \subseteq \{1,\ldots,m\}$, then $x^\gamma \in (I + J + \mm\nn)^{(s)} S_{F \sqcup G_\gamma}$ if and only if $|\beta| \geq s$ or $x^\alpha \in I^{(s - |\beta|)}S_{F \sqcup G_\gamma}$.
\item If $\emptyset \neq F \sqcup G_\gamma \subseteq \{m+1,\ldots,n\}$, then $x^\gamma \in (I + J + \mm\nn)^{(s)} S_{F \sqcup G_\gamma}$ if and only if $|\alpha| \geq s$ or $x^\beta \in J^{(s - |\alpha|)}S_{F \sqcup G_\gamma}$.
\end{enumerate}
\end{lemma}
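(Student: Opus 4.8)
The plan is to reduce the statement to the description of $\Ass(I+J+\mm\nn)$ in \Cref{assprimesSFP} together with the definition of the symbolic power as a finite intersection of prime powers. I will prove only part (1); part (2) is identical after interchanging the roles of $A$ and $B$. Fix $F$ with $\emptyset \neq F \sqcup G_\gamma \subseteq \{1,\dots,m\}$ and write $W = F \sqcup G_\gamma$ for brevity; note that this containment forces $G_\gamma = G_\alpha$ and $\beta \in \NN^{n-m}$, so that $x^\gamma = x^\alpha x^\beta$ with $x^\beta$ a genuine (non-Laurent) $Y$-monomial.

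First I would record, using that $I+J+\mm\nn$ is a squarefree monomial ideal and \Cref{assprimesSFP}, that
$$(I+J+\mm\nn)^{(s)} = \Big(\bigcap_{\qq \in \Ass(I)} (\qq+\nn)^s\Big) \cap \Big(\bigcap_{\rr \in \Ass(J)} (\mm+\rr)^s\Big).$$
Since $\Ass(I)$ and $\Ass(J)$ are finite, extension to $S_W$ commutes with these intersections. As $W$ is a nonempty subset of $\{1,\dots,m\}$, \Cref{maximalideal} gives $\mm S_W = S_W$, hence $(\mm+\rr)^s S_W = S_W$ for every $\rr$; the second block of the intersection therefore contributes nothing, and
$$(I+J+\mm\nn)^{(s)} S_W = \bigcap_{\qq \in \Ass(I)} (\qq+\nn)^s S_W.$$

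Next I would analyze a single factor. Expanding gives $(\qq+\nn)^s = \sum_{i=0}^s \qq^{s-i}\nn^i$. Because $\qq$ lives in the $X$-variables and $\nn$ in the $Y$-variables while $W$ inverts only $X$-variables, membership of the monomial $x^\gamma = x^\alpha x^\beta$ in $\qq^{s-i}\nn^i S_W$ separates: $x^\gamma \in \qq^{s-i}\nn^i S_W$ if and only if $x^\alpha \in \qq^{s-i} S_W$ and $x^\beta \in \nn^i$. Since all the ideals in sight are monomial, a monomial lies in the sum $\sum_{i=0}^s \qq^{s-i}\nn^i S_W$ exactly when it lies in one of the summands; hence $x^\gamma \in (\qq+\nn)^s S_W$ if and only if there is an $i \in \{0,\dots,s\}$ with $|\beta| \geq i$ (here $x^\beta \in \nn^i$ is equivalent to $|\beta| \geq i$ because $\beta \in \NN^{n-m}$) and $x^\alpha \in \qq^{s-i} S_W$. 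If $|\beta| \geq s$ this holds for $i = s$ regardless of $\qq$, which is the first alternative of the lemma. If $|\beta| < s$, then from the chain $\qq^s \subseteq \cdots \subseteq \qq^{s-|\beta|}$ the largest of the ideals $\qq^{s-i}$ with $i \leq |\beta|$ is $\qq^{s-|\beta|}$, attained at $i = |\beta|$, so the condition collapses to $x^\alpha \in \qq^{s-|\beta|} S_W$.

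Finally, intersecting over $\qq \in \Ass(I)$ and again using that extension to $S_W$ commutes with a finite intersection, together with $I^{(s-|\beta|)} = \bigcap_{\qq \in \Ass(I)} \qq^{s-|\beta|}$ (valid since $I$ is squarefree, and with the usual convention $I^{(k)} = S$ for $k \leq 0$), yields exactly that $x^\gamma \in (I+J+\mm\nn)^{(s)} S_W$ if and only if $|\beta| \geq s$ or $x^\alpha \in I^{(s-|\beta|)} S_W$. The main point requiring care is the decoupling step — that membership of a monomial in $\qq^{s-i}\nn^i S_W$, and in a localized sum of such ideals, can be tested separately in the $X$- and $Y$-variables; this rests on the fact that all ideals involved are monomial, so divisibility can be checked generator-by-generator, after which everything reduces to bookkeeping with \Cref{assprimesSFP}, \Cref{maximalideal}, and the monotonicity of ordinary powers.
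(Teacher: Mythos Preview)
Your argument is correct and follows essentially the same route as the paper: decompose $(I+J+\mm\nn)^{(s)}$ via \Cref{assprimesSFP}, discard the $(\mm+\rr)^s$ factors using that $W\subseteq\{1,\dots,m\}$ is nonempty, expand each $(\qq+\nn)^s$ as $\sum_i \qq^{s-i}\nn^i$, reduce membership to $x^\alpha\in\qq^{s-|\beta|}S_W$ or $|\beta|\ge s$, and then intersect over $\qq\in\Ass(I)$ to recover $I^{(s-|\beta|)}$. The only cosmetic difference is that you invoke \Cref{maximalideal} to kill the $(\mm+\rr)^s$ factors, whereas the paper observes directly that $x^\gamma\in\mm^s S_W\subseteq(\mm+\rr)^s S_W$.
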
 

\begin{proof} It suffices to prove one, as the other follows from symmetry. By \Cref{assprimesSFP}, we can write

%a minimal vertex cover (m.v.c.) of $G \ast H$ is equal to either $(\qq + \nn)$, where $\qq$ is a m.v.c. of $G$ and $\nn = (m+1,\ldots,n)$, or $(\mm + \rr)$, where $\rr$ is a m.v.c. of $H$ and $\mm = (1,\ldots,m)$.
\begin{equation} \notag
\begin{aligned}
(I + J + \mm\nn)^{(s)} & =  & & \bigcap_{\pp \in \Ass((I + J + \mm\nn)} \pp^s \\
& = & & \bigcap_{\qq \in \Ass(I)} (\qq + \nn)^s \cap \bigcap_{\rr \in \Ass(J)} (\mm + \rr)^s
\end{aligned}
\end{equation}

Let $\emptyset \neq  F \sqcup G_\gamma \subseteq \{1,\ldots,m\}$. Then $x^\gamma \in \mm^s S_{F \sqcup G_\gamma} \subseteq (\mm + \rr)^s S_{F \sqcup G_\gamma}$ for any $s$, so $x^\gamma$ is in this intersection iff $x^\gamma \in \bigcap_{\qq \in \Ass(I)} (\qq + \nn)^s$. Furthermore, because $(\qq + \nn)^s = \sum_{i=0}^s \qq^{s-i} \nn^i= \sum_{i=0}^s \qq^{s-i} \cap \nn^i$, and because $x^\gamma \in \nn^i$ iff $x^\beta \in \nn^i$, which is true if and only if $|\beta| \leq i$, we have $x^\gamma \in (\qq + \nn)^s$ iff $x^\gamma \in \sum_{i=0}^{|\beta|} \qq^{s-i}$ or $|\beta| \geq s$, which is true iff $x^\gamma \in \qq^{s - |\beta|}$ or $|\beta| \geq s$. Therefore, for $\emptyset \neq F \sqcup G_\gamma$,

\begin{equation} \notag
\begin{aligned}
x^\gamma \in (I + J + \mm\nn)^{(s)} & \Longleftrightarrow & & x^\gamma \in \bigcap_{\qq \in \Ass(I)} (\qq + \nn)^s S_{F \sqcup G_\gamma} \\
& \Longleftrightarrow & & x^\gamma \in \bigcap_{\qq \in \Ass(I)} S_{F \sqcup G_\gamma} = I^{(s - |\beta|)} S_{F \sqcup G_\gamma} \text{ or } |\beta| \geq s \\
& \Longleftrightarrow & & x^\alpha \in I^{(s - |\beta|)} S_{F \sqcup G_\gamma} \text{ or } |\beta| \geq s
\end{aligned}
\end{equation}

This concludes the proof of the first part. The same proof gives the second part.

\end{proof}

With this in place, we may now give the degree complex decomposition of the symbolic powers of the fiber product.

\begin{theorem} \label{symbolicfiberproduct} Let $I \subset A$ and $J \subset B$ be squarefree monomial ideals, $\gamma = (\alpha,\beta) \in \ZZ^m \times \ZZ^{n-m}$, and $s \geq 1$. Let $$\mathcal{A'} = \begin{cases} \{\emptyset \subsetneq F \in \Delta_\alpha(I^{(s-|\beta|)})\}, & G_\beta = \emptyset, |\beta| < s \\ \emptyset, & G_\beta \neq \emptyset \text{ or } |\beta| \geq s \end{cases} \text{ be nonempty faces of } \Delta_\alpha(I^{(s-|\beta)|});$$ $$\mathcal{B'} = \begin{cases} \{\emptyset \subsetneq F \in \Delta_\beta(J^{(s-|\alpha|)})\}, & G_\alpha = \emptyset, |\alpha| < s \\ \emptyset, & G_\alpha \neq \emptyset \text{ or } |\alpha| \geq s \end{cases} \text{ be nonempty faces of } \Delta_\beta(J^{(s-|\alpha|)}).$$ Then the nonempty faces of $\Delta_\gamma((I+J+\mm\nn)^{(s)})$ are given by
	
	$$\{\emptyset \subsetneq F \in \Delta_\gamma((I+J+\mm\nn)^{(s)})\} = \mathcal{A'} \sqcup \mathcal{B'}$$

%Assuming at least one of $\Delta_\gamma((I+J+\mm\nn)^{(s)})$, $\Delta_\alpha(I^{(s-|\beta|)})$, and $\Delta_\beta(J^{(s-|\alpha|)})$ is neither the void complex nor the irrelevant complex, then

%$$\Delta_\gamma((I+J+\mm\nn)^{(s)}) = \begin{cases} \Delta_\alpha(I^{(s-|\beta|)}) \cup \Delta_\beta(J^{(s-|\alpha|)}), & G_\gamma = \emptyset, |\alpha|,|\beta| < s \\
%	\Delta_\alpha(I^{(s-|\beta|)}), &  (G_\alpha \neq \emptyset \text{ or } |\alpha| \geq s), G_\beta = \emptyset, |\beta| < s \\
%	\Delta_\beta(J^{(s-|\alpha|)}), &  (G_\beta \neq \emptyset \text{ or } |\beta| \geq s), G_\alpha = \emptyset, |\alpha| < s \\
%	\emptyset, & \text{otherwise.}
%	\end{cases}$$

% the degree complex of the symbolic power of the fiber product is given by 

%$$\Delta_\gamma((I+J+\mm\nn)^{(s)}) = \Delta_\alpha(I^{(s - |\beta|)}) \cup \Delta_\beta(J^{(s-|\alpha|)}),$$

%as long as one of $\Delta_\gamma((I+J+\mm\nn)^{(s)})$, $\Delta_\alpha(I^{(s-|\beta|)})$, and $\Delta_\beta(J^{(s-|\alpha|)})$ has a nonempty face, and where $\Delta_\alpha(I^{(s - |\beta|)}) = \{\}$ if $G_\beta \neq \emptyset$ or if $|\beta| \geq s$ and $\Delta_\beta(J^{(s-|\alpha|)}) = \{\}$ if $G_\alpha \neq \emptyset$ or if $|\alpha| \geq s$.
\end{theorem}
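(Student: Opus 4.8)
The plan is to mirror the proof of \Cref{ordinaryfiberproduct} almost verbatim, substituting symbolic powers for ordinary powers and invoking \Cref{reducingsymbolic} in place of \Cref{fibprodlem2}. Two structural inputs are needed: first, that every face $F$ with $F \sqcup G_\gamma \neq \emptyset$ forces $F \sqcup G_\gamma$ to lie entirely inside $\{1,\ldots,m\}$ or entirely inside $\{m+1,\ldots,n\}$; second, a description of $\Delta_\gamma((I+J+\mm\nn)^{(s)})$ restricted to each of these index sets. The first is already supplied by \Cref{fibprodlem1}, which was stated for ordinary and symbolic powers simultaneously, and the second is exactly \Cref{reducingsymbolic}.

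First I would take a face $F$ with $F \sqcup G_\gamma \neq \emptyset$. By \Cref{fibprodlem1}, either $F \sqcup G_\gamma \subseteq \{1,\ldots,m\}$ or $F \sqcup G_\gamma \subseteq \{m+1,\ldots,n\}$. In the first case $F \sqcup G_\gamma = F \sqcup G_\alpha$ and $G_\beta = \emptyset$, so $\beta \in \NN^{n-m}$; by \Cref{reducingsymbolic}(1), $F \in \Delta_\gamma((I+J+\mm\nn)^{(s)})$ iff $x^\gamma \not\in (I+J+\mm\nn)^{(s)} S_{F \sqcup G_\gamma}$ iff ($|\beta| < s$ and $x^\alpha \not\in I^{(s-|\beta|)} S_{F \sqcup G_\alpha}$), i.e.\ iff $|\beta| < s$ and $F \in \Delta_\alpha(I^{(s-|\beta|)})$. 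Hence such an $F$ lies in $\mathcal{A'}$. The case $F \sqcup G_\gamma \subseteq \{m+1,\ldots,n\}$ is symmetric, using \Cref{reducingsymbolic}(2), and places $F$ in $\mathcal{B'}$.

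Conversely, given $\emptyset \neq F \in \Delta_\alpha(I^{(s-|\beta|)})$ together with the side conditions $G_\beta = \emptyset$ and $|\beta| < s$ that define $\mathcal{A'}$, I note $F \subseteq \{1,\ldots,m\}$ and $G_\gamma = G_\alpha \subseteq \{1,\ldots,m\}$, so $F \sqcup G_\gamma \subseteq \{1,\ldots,m\}$ is nonempty and \Cref{reducingsymbolic}(1) applies, yielding $F \in \Delta_\gamma((I+J+\mm\nn)^{(s)})$; the argument for $\mathcal{B'}$ is symmetric. Finally, the union $\mathcal{A'} \cup \mathcal{B'}$ is disjoint: a nonempty face of $\mathcal{A'}$ forces $F \sqcup G_\alpha \subseteq \{1,\ldots,m\}$ while a nonempty face of $\mathcal{B'}$ forces $F \sqcup G_\beta \subseteq \{m+1,\ldots,n\}$, and for $F \neq \emptyset$ these cannot both hold.

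The main obstacle is essentially nonexistent: all the substance has been front-loaded into \Cref{fibprodlem1} and especially \Cref{reducingsymbolic}, whose proof in turn rested on the associated-prime computation \Cref{assprimesSFP} and the identity $(\qq+\nn)^s = \sum_{i=0}^s \qq^{s-i} \cap \nn^i$. The only genuine care required here is bookkeeping — correctly extracting the side conditions $G_\beta = \emptyset$, $|\beta| < s$ from ``$F \sqcup G_\gamma \subseteq \{1,\ldots,m\}$ and $F \in \Delta_\alpha(I^{(s-|\beta|)})$'' and conversely — and checking that the empty-face case (which can behave as in the Observation following \Cref{ordinaryfiberproduct}) is genuinely excluded from the statement, so that no simplification to an equality of complexes is being claimed.
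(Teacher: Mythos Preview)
Your proposal is correct and follows essentially the same route as the paper: invoke \Cref{fibprodlem1} to force $F\sqcup G_\gamma$ into one of the two index blocks, then apply \Cref{reducingsymbolic} in each block to identify the face with one of $\Delta_\alpha(I^{(s-|\beta|)})$ or $\Delta_\beta(J^{(s-|\alpha|)})$, and run the converse the same way. Your explicit verification of disjointness and your careful separation of the side conditions $G_\beta=\emptyset$, $|\beta|<s$ as part of the definition of $\mathcal{A'}$ (rather than as consequences) are, if anything, slightly more precise than the paper's write-up.
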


\begin{proof} By \Cref{fibprodlem1}, $F \in \Delta_\gamma((I+J+\mm\nn)^{(s)})$ implies that $F \sqcup G_\gamma \subseteq \{1,\ldots,m\}$ or $F \sqcup G_\gamma \subseteq \{m+1,\ldots,n\}$ for any $\gamma$. Assuming $F \sqcup G_\gamma \neq \emptyset$, if $F \subseteq \{1,\ldots,m\}$, then $G_\beta = \emptyset $, and by \Cref{reducingsymbolic},
\begin{equation} \notag
\begin{aligned}
F \in \Delta_\gamma((I + J + \mm\nn)^{(s)}) & \Longleftrightarrow & & x^\gamma \not\in (I + J + \mm\nn)^{(s)} S_{F \sqcup G_\gamma} \\
& \Longleftrightarrow & & x^\alpha \not\in I^{(s - |\beta|)} S_{F \sqcup G_\gamma} = I^{(s - |\beta|)} S_{F \sqcup G_\alpha} \text{ and } |\beta| < s \\
& \Longleftrightarrow & & F \in \Delta_\alpha(I^{(s - |\beta|)}), G_\beta = \emptyset, \text{ and } |\beta| < s
\end{aligned}
\end{equation}

Likewise, if $F \subseteq \{m+1,\ldots,n\}$, then $G_\alpha = \emptyset$, $|\alpha| < s$, and $F \in \Delta_\beta(J^{(s-|\alpha|)})$

On the other hand, if $\emptyset \neq F \in \Delta_\alpha(I^{(s - |\beta|)})$, then $|\beta|$ must be less than $s$, $G_\beta$ must be empty, and $F \sqcup G_\alpha = F \sqcup G_\gamma \subseteq \{1,\ldots,m\}$. Then by \Cref{reducingsymbolic}, we have again that $F \in \Delta_\gamma((I + J + \mm\nn)^{(s)})$.

Therefore, the nonempty faces of $\Delta_\gamma((I + J + \mm\nn)^{(s)})$ are precisely the nonempty faces of either $\Delta_\alpha(I^{(s-|\beta|)})$ or $\Delta_\beta(J^{(s-|\alpha|)})$.

\end{proof}

With the decompositions of \Cref{ordinaryfiberproduct} and \Cref{symbolicfiberproduct}, we can compute the dimensions of the local cohomology modules explicitly in terms of those of $A/I^t$ and $B/J^t$ (respectively, $A/I^{(t)}$ and $B/J^{(t)}$).

\begin{theorem} \label{cohomologyfiberproduct} Let $I \subset A = k[x_1,\ldots,x_m]$, $J \subset B = k[x_{m+1},\ldots,x_n]$, $S = A \otimes_k B$, $\mm = (x_1,\ldots,x_m)$, and $\nn = (x_{m+1},\ldots,x_n)$. Let $\diamond$ be the condition that $p = 1$ and $\Delta_\alpha(I^{s-|\beta|})$ and $\Delta_\beta(J^{s-|\alpha|})$ are both nonempty, and let $\diamond'$ be the condition that $p = 1$ and $\Delta_\alpha(I^{(s-|\beta|)})$ and $\Delta_\beta(J^{(s-|\alpha|)})$ are both nonempty. Then for all $p \geq 1$,

%Then unless $i = 1$, $G_\gamma = \emptyset$, and $\Delta_\alpha(I^{s-|\beta|})$ and $\Delta_\beta(J^{s-|\alpha|})$ are both nonempty,
\begin{enumerate}
\item $\dim_k H_{\mm + \nn}^p(S/(I+J+\mm\nn)^s)_\gamma =$
$$\begin{cases} \dim_k H_\mm^p(A/I^{s-|\beta|})_\alpha + \dim_k H_\nn^p(B/J^{s-|\alpha|})_\beta, & \text{if not } \diamond \\
\dim_k H_\mm^p(A/I^{s-|\beta|})_\alpha + \dim_k H_\nn^p(B/J^{s-|\alpha|})_\beta + 1, & \text{if } \diamond \end{cases}$$
\item $\dim_k H_{\mm + \nn}^p(S/(I+J+\mm\nn)^{(s)})_\gamma =$
$$\begin{cases} \dim_k H_\mm^p(A/I^{(s-|\beta|)})_\alpha + \dim_k H_\nn^p(B/J^{(s-|\alpha|)})_\beta, & \text{if not } \diamond' \\
\dim_k H_\mm^p(A/I^{(s-|\beta|)})_\alpha + \dim_k H_\nn^p(B/J^{(s-|\alpha|)})_\beta, & \text{if } \diamond' \end{cases}$$
\end{enumerate}
%If $p \neq 1$ or either $\Delta_\alpha(I^{s-|\beta|})$ or $\Delta_\beta(J^{s-|\alpha|})$ (respectively, $\Delta_\alpha(I^{(s-|\beta|)})$ or $\Delta_\beta(J^{(s-|\alpha|)})$) is empty, then

%\begin{enumerate}
%\item 
%$\dim_k H_{\mm + \nn}^p(S/((I+J+\mm\nn)^s)_\gamma = \dim_k H_\mm^p(A/I^{s-|\beta|})_\alpha + \dim_k H_\nn^p(B/J^{s-|\alpha|})_\beta$
%\item 
%$\dim_k H_{\mm + \nn}^p(S/((I+J+\mm\nn)^{(s)})_\gamma = \dim_k H_\mm^p(A/I^{(s-|\beta|)})_\alpha + \dim_k H_\nn^p(B/J^{(s-|\alpha|)})_\beta$
%\end{enumerate}

%If $p = 1$ and $\Delta_\alpha(I^{s-|\beta|})$ and $\Delta_\beta(J^{s-|\alpha|})$ are both nonempty (so $G_\gamma = \emptyset$), then
%\begin{enumerate}
%\item $\dim_k H_{\mm + \nn}^p(S/((I+J+\mm\nn)^s)_\gamma = \dim_k H_\mm^p(A/I^{s-|\beta|})_\alpha + \dim_k H_\nn^p(B/J^{s-|\alpha|})_\beta + 1$
%\item $\dim_k H_{\mm + \nn}^p(S/((I+J+\mm\nn)^{(s)})_\gamma = \dim_k H_\mm^p(A/I^{(s-|\beta|)})_\alpha + \dim_k H_\nn^p(B/J^{(s-|\alpha|)})_\beta + 1$
%\end{enumerate}

\end{theorem}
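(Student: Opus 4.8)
The plan is to feed the degree-complex decompositions from \Cref{ordinaryfiberproduct} and \Cref{symbolicfiberproduct} into Takayama's formula (\Cref{takayama}), handling the ordinary case in full and noting that the symbolic case is word-for-word identical via \Cref{symbolicfiberproduct} in place of \Cref{ordinaryfiberproduct}. First I would record the key structural fact: by \Cref{ordinaryfiberproduct}, the nonempty faces of $\Delta := \Delta_\gamma((I+J+\mm\nn)^s)$ are the disjoint union $\mathcal{A} \sqcup \mathcal{B}$, where $\mathcal{A}$ consists of the nonempty faces of $\Delta_\alpha(I^{s-|\beta|})$ (and is empty unless $G_\beta = \emptyset$, $|\beta| < s$) and $\mathcal{B}$ consists of the nonempty faces of $\Delta_\beta(J^{s-|\alpha|})$ (empty unless $G_\alpha = \emptyset$, $|\alpha| < s$). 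I would also observe that in any of the cases where $\mathcal{A}$ is nonempty we have $G_\gamma = G_\alpha \subseteq \{1,\ldots,m\}$ and $\beta \in \NN^{n-m}$, and symmetrically for $\mathcal{B}$; this is what makes the shift $p - |G_\gamma| - 1$ in Takayama's formula match up between $\Delta$ and the two constituent complexes.

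The homological heart of the argument is computing $\dim_k \tilde{H}_{q}(\Delta;k)$ for $q = p - |G_\gamma| - 1$ from the decomposition. Write $\Delta_1 = \Delta_\alpha(I^{s-|\beta|})$ and $\Delta_2 = \Delta_\beta(J^{s-|\alpha|})$, each regarded (per \Cref{convention}) as a complex on disjoint vertex sets $X$ and $Y$. If both are nonempty (so both contain $\emptyset$ and at least one vertex), then $\Delta$ is obtained by gluing $\Delta_1$ and $\Delta_2$ along the single common face $\emptyset$, i.e. $\Delta$ is the wedge-like union with $\Delta_1 \cap \Delta_2 = \{\emptyset\}$, the irrelevant complex. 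The Mayer--Vietoris sequence then gives, for each $q$,
\begin{equation} \notag
\tilde{H}_q(\{\emptyset\};k) \to \tilde{H}_q(\Delta_1;k) \oplus \tilde{H}_q(\Delta_2;k) \to \tilde{H}_q(\Delta;k) \to \tilde{H}_{q-1}(\{\emptyset\};k) \to \tilde{H}_{q-1}(\Delta_1;k) \oplus \tilde{H}_{q-1}(\Delta_2;k).
\end{equation}
Since $\tilde{H}_j(\{\emptyset\};k) = k$ for $j = -1$ and $0$ otherwise, this yields $\tilde{H}_q(\Delta;k) \cong \tilde{H}_q(\Delta_1;k) \oplus \tilde{H}_q(\Delta_2;k)$ for $q \neq 0$, while for $q = 0$ there is an extra $+1$ coming from the boundary map $\tilde{H}_0(\Delta;k) \to \tilde{H}_{-1}(\{\emptyset\};k) = k$ (the two components of $\Delta_1 \sqcup_\emptyset \Delta_2$ contributing one extra to $\tilde{H}_0$; one must check this map is surjective because each $\Delta_i$ is nonvoid, hence connected to $\emptyset$). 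If only one of $\Delta_1, \Delta_2$ is nonempty then $\Delta$ equals that one (together with $\emptyset$) and no correction term appears; if neither is nonempty the formula still holds trivially since all three local cohomology modules vanish in the relevant degrees by Takayama (the complex $\Delta$ is at most the irrelevant complex, contributing only in cohomological degree accounted for by $G_\gamma$). Translating $q = 0$ back through Takayama gives $p = |G_\gamma| + 1$; but in the $\diamond$ situation $G_\beta = \emptyset$ forces $G_\gamma = G_\alpha \subseteq X$ and symmetrically $G_\alpha = \emptyset$, so $G_\gamma = \emptyset$ and the extra term sits exactly at $p = 1$, matching the statement.

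The main obstacle, and the step requiring genuine care, is the bookkeeping of the index shifts: Takayama's formula attaches to $H^p_{\mm+\nn}(S/-)_\gamma$ the reduced homology in degree $p - |G_\gamma| - 1$ of $\Delta_\gamma$, to $H^p_\mm(A/-)_\alpha$ the degree $p - |G_\alpha| - 1$ of $\Delta_\alpha$, and to $H^p_\nn(B/-)_\beta$ the degree $p - |G_\beta| - 1$ of $\Delta_\beta$, and one must verify that in every branch where a term is nonzero the relevant $G$'s agree so that the same integer $q$ governs all three. This is exactly why the case analysis in $\mathcal{A}, \mathcal{B}$ (on the signs of $\alpha, \beta$) cannot be bypassed: e.g. if $G_\beta \neq \emptyset$ then $\mathcal{A} = \emptyset$, but also $H^p_\mm(A/I^{s-|\beta|})_\alpha$ — wait, rather one checks $\dim_k H^p_{\mm+\nn}(S/(I+J+\mm\nn)^s)_\gamma$ reduces to the surviving $B$-term with matching indices. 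I would organize the final write-up as: (i) invoke \Cref{takayama} to pass to $\tilde H_\bullet(\Delta_\gamma)$, checking first that $G_\gamma \in \Delta_{\mathbf 0}((I+J+\mm\nn)^s)$ in the nonzero cases; (ii) apply the Mayer--Vietoris computation above to get the two-summand formula plus the $q=0$ correction; (iii) reindex each summand back to $H^p_\mm(A/I^{s-|\beta|})_\alpha$ and $H^p_\nn(B/J^{s-|\alpha|})_\beta$ via \Cref{takayama} again, verifying the $G$-compatibility; (iv) remark that replacing \Cref{ordinaryfiberproduct} with \Cref{symbolicfiberproduct} gives part (2) verbatim.
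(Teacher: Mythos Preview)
Your proposal is correct and follows essentially the same route as the paper: invoke Takayama's formula, feed in the nonempty-face decomposition from \Cref{ordinaryfiberproduct} (resp.\ \Cref{symbolicfiberproduct}), compute the reduced homology of the resulting union of two complexes on disjoint vertex sets, and track the $+1$ correction at $q=0$ when both pieces have a nonempty face. The only cosmetic difference is that you run a Mayer--Vietoris sequence with intersection $\{\emptyset\}$, whereas the paper uses the equivalent elementary observation that unreduced homology splits over a disjoint union together with the identity $\dim_k H_0(\Delta) = \dim_k \tilde H_0(\Delta) + 1$ for $\Delta$ with a nonempty face; both devices produce the same extra $+1$ exactly when $p=1$ and $G_\gamma=\emptyset$.
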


\begin{proof} Note that $H_p(\cdot) = \tilde{H}_p(\cdot)$ for $p \geq 1$, and that $\dim_k H_p(\Delta_1 \sqcup \Delta_2) = \dim_k H_p(\Delta_1) + \dim_k H_p(\Delta_2)$ for all $p \geq 0$. Furthermore, $\dim_k H_0(\Delta) = \begin{cases} \dim_k \tilde{H}_0(\Delta) = 0, & \Delta = \emptyset, \{\emptyset\} \\ 1 + \dim_k \tilde{H}_0(\Delta), & \Delta \neq \emptyset, \{\emptyset\} \end{cases}$.

Let $K = (I + J + \mm\nn)^s$. Then by \Cref{ordinaryfiberproduct}, the nonempty faces of $\Delta(K)$ are the nonempty faces of  $\Delta_\alpha(I^{s-|\beta|}) \sqcup \Delta_\beta(J^{s-|\alpha|})$. We then have, for $p \geq 1$, that $H_p(\Delta(K)) = H_p(\Delta_\alpha(I^{s-|\beta|})) + H_p(\Delta_\beta(J^{s-|\alpha|})$.

If either $\Delta_\alpha(I^{s-|\beta|})$ or $\Delta_\beta(J^{s-|\alpha|})$ has no non-empty face, then this reduces trivially. Without loss of generality, suppose there is no $F \neq \emptyset$ in $\Delta_\beta(J^{s-|\alpha|})$. Then $G_\gamma = G_\alpha$, and by \Cref{takayama} and \Cref{ordinaryfiberproduct} that

\begin{equation} \notag
\begin{aligned}
\dim_k H_{\mm + \nn}^p(S/K)_\gamma & = & & \dim_k \tilde{H}_{p - |G_\gamma| - 1} (\Delta_\gamma((I+J+\mm\nn)^s;k) \\
& = & & \dim_k \tilde{H}_{p - |G_\alpha| - 1} (\Delta_\alpha(I^{s-|\beta|};k) \\
& = & & \dim_k H_\mm^p(\Delta_\alpha(I^{s-|\beta|};k) \\
& = & & \dim_k H_\mm^p(A/I^{s-|\beta|})_\alpha + \dim_k H_\nn^p(B/J^{s-|\alpha|})_\beta,
\end{aligned}
\end{equation}

where the last equality arises because, for $p \geq 1$, $\dim_k H_\nn^p(B/J^{s-|\alpha|})_\beta = 0$ when $\Delta_\beta(J^{s-|\alpha|})$ is the void complex or the irrelevant complex.

By \Cref{takayama} and \Cref{ordinaryfiberproduct}, we have that if neither $\Delta_\alpha(I^{s-|\beta|})$ nor $\Delta_\beta(J^{s-|\alpha|})$ is empty, then $G_\gamma = \emptyset$. Then if $p \neq |G_\gamma| + 1 = 1$,

\begin{equation} \notag
\begin{aligned}
\dim_k H_{\mm + \nn}^p(S/K)_\gamma & = & & \dim_k \tilde{H}_{p - |G_\gamma| - 1}(\Delta_\gamma((I+J+\mm\nn)^s);k) \\
& = & & \dim_k \tilde{H}_{p - 1}(\Delta_\alpha(I^{s-|\beta|}) \sqcup \Delta_\beta(J^{s-|\alpha|});k) \\
& = & & \dim_k H_{p - 1}(\Delta_\alpha(I^{s-|\beta|}) \sqcup \Delta_\beta(J^{s-|\alpha|});k) \\
& = & & \dim_k H_{p - 1}(\Delta_\alpha(I^{s-|\beta|};k)  + \dim_k H_{p - 1}(\Delta_\beta(J^{s-|\alpha|};k) \\
& = & & \dim_k \tilde{H}_{p - 1}(\Delta_\alpha(I^{s-|\beta|};k) + \dim_k \tilde{H}_{p - 1}(\Delta_\beta(J^{s-|\alpha|};k) \\
& = & & \dim_k H_\mm^p(A/I^{s-|\beta|})_\alpha + \dim_k H_\nn^p (B/J^{s-|\alpha|})_\beta
\end{aligned}
\end{equation}

If both $\Delta_\alpha(I^{s-|\beta|})$ and $\Delta_\beta(J^{s-|\alpha|})$ are nonempty and $p = |G_\gamma| + 1 = 1$, this is 

\begin{equation} \notag
\begin{aligned}
\dim_k H_{\mm + \nn}^1(S/K)_\gamma & = & & \dim_k \tilde{H}_0(\Delta_\gamma((I+J+\mm\nn)^s);k) \\
& = & & \dim_k \tilde{H}_0(\Delta_\alpha(I^{s-|\beta|}) \sqcup \Delta_\beta(J^{s-|\alpha|});k) \\
& = & & \dim_k H_0(\Delta_\alpha(I^{s-|\beta|}) \sqcup \Delta_\beta(J^{s-|\alpha|});k) - 1\\
& = & & \dim_k H_0(\Delta_\alpha(I^{s-|\beta|});k)  + \dim_k H_0(\Delta_\beta(J^{s-|\alpha|});k) - 1 \\
& = & & (\dim_k \tilde{H}_0(\Delta_\alpha(I^{s-|\beta|});k) + 1) + (\dim_k \tilde{H}_0(\Delta_\beta(J^{s-|\alpha|});k) + 1) - 1\\
& = & & \dim_k H_\mm^1(A/I^{s-|\beta|})_\alpha + \dim_k H_\nn^1(B/J^{s-|\alpha|})_\beta + 1
\end{aligned}
\end{equation}

The proof for the symbolic power case proceeds exactly the same way, using the decomposition from \Cref{symbolicfiberproduct}.
\end{proof}

Here is a lemma addressing the case when $p = 0$.

\begin{lemma} \label{pIs0}
Let $\mathcal{I} \subset S$ be a monomial ideal. Then the following are equivalent:
\begin{enumerate}
\item $\dim_k H_{\mm+\nn}^0(S/\mathcal{I})_\gamma \neq 0$ 
\item $\dim_k H_{\mm+\nn}^0(S/\mathcal{I})_\gamma = 1$
\item $\Delta_\gamma(\mathcal{I}) = \{\emptyset\}$.
\end{enumerate}
\end{lemma}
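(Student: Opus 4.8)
The plan is to chain together Takayama's formula (\Cref{takayama}) with elementary facts about the reduced homology of the two degenerate complexes (void and irrelevant) in order to see that $H^0_{\mm+\nn}(S/\mathcal{I})_\gamma$ is at most one-dimensional and is nonzero exactly when $\Delta_\gamma(\mathcal{I})$ is the irrelevant complex $\{\emptyset\}$. First I would recall from the definition in the preliminaries that $\widetilde{H}_p(\emptyset;k)=0$ for all $p$, while $\widetilde{H}_p(\{\emptyset\};k)=0$ for $p\neq -1$ and $\widetilde{H}_{-1}(\{\emptyset\};k)=k$. The implications $(2)\Rightarrow(1)$ are immediate, so the work is in $(1)\Rightarrow(3)$ and $(3)\Rightarrow(2)$.

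For $(3)\Rightarrow(2)$: if $\Delta_\gamma(\mathcal{I})=\{\emptyset\}$, then in particular $\emptyset\in\Delta_\gamma(\mathcal{I})$, which forces $G_\gamma=\emptyset$ (since $\emptyset\supseteq G_\gamma$ would be false otherwise — more precisely every face of $\Delta_\gamma(\mathcal{I})$ lies in $[n]\setminus G_\gamma$ and contains no element of $G_\gamma$, but here the only face is empty, so $G_\gamma=\emptyset$; alternatively one observes $G_\gamma\in\Delta_\ob(\mathcal I)$ is needed, handled below). With $G_\gamma=\emptyset$, Takayama's formula gives $\dim_k H^0_{\mm+\nn}(S/\mathcal{I})_\gamma=\dim_k\widetilde{H}_{-1}(\Delta_\gamma(\mathcal{I});k)=\dim_k\widetilde{H}_{-1}(\{\emptyset\};k)=1$, provided the side condition $G_\gamma\in\Delta_\ob(\mathcal{I})$ holds; but $G_\gamma=\emptyset$ and $\mathcal{I}$ nontrivial give $\emptyset\in\Delta_\ob(\mathcal{I})$ by \Cref{subsets}(3), so the condition is satisfied.

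For $(1)\Rightarrow(3)$: suppose $\dim_k H^0_{\mm+\nn}(S/\mathcal{I})_\gamma\neq 0$. By \Cref{takayama} the side condition $G_\gamma\in\Delta_\ob(\mathcal{I})$ must hold and $\dim_k\widetilde{H}_{-|G_\gamma|-1}(\Delta_\gamma(\mathcal{I});k)\neq 0$. Since reduced simplicial homology in negative degrees vanishes except in degree $-1$ (and then only for the irrelevant complex), and the void complex has all reduced homology zero, the only way $\widetilde{H}_{-|G_\gamma|-1}$ can be nonzero is $|G_\gamma|=0$, i.e.\ $G_\gamma=\emptyset$, together with $\Delta_\gamma(\mathcal{I})=\{\emptyset\}$. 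This is the one spot where some care is needed — ruling out that $\Delta_\gamma(\mathcal I)$ could be larger and still have nonvanishing $(-1)$st reduced homology; but a simplicial complex with a nonempty face has $\widetilde{H}_{-1}=0$, so $\widetilde{H}_{-1}(\Delta_\gamma(\mathcal{I});k)\neq 0$ forces $\Delta_\gamma(\mathcal{I})\in\{\emptyset,\{\emptyset\}\}$, and the void case is excluded since its $\widetilde{H}_{-1}$ also vanishes. Hence $\Delta_\gamma(\mathcal{I})=\{\emptyset\}$, giving (3) and simultaneously showing the dimension equals exactly $1$, which closes the loop to (2). The main (minor) obstacle is bookkeeping the edge cases of reduced homology in degree $-1$ for the void versus irrelevant complex; everything else is a direct application of Takayama's theorem.
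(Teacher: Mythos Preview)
Your approach is the same as the paper's: apply Takayama's formula and use that reduced homology in degree $\le -1$ is nonzero only for the irrelevant complex in degree $-1$. Your argument for $(1)\Rightarrow(3)$ (and hence $(1)\Leftrightarrow(2)$) is correct and matches the paper.

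There is, however, a genuine gap in your $(3)\Rightarrow(2)$ step. You claim that $\Delta_\gamma(\mathcal{I})=\{\emptyset\}$ forces $G_\gamma=\emptyset$, arguing that ``every face of $\Delta_\gamma(\mathcal{I})$ lies in $[n]\setminus G_\gamma$ \ldots\ but here the only face is empty, so $G_\gamma=\emptyset$.'' This inference is invalid: the empty set is a subset of $[n]\setminus G_\gamma$ regardless of the size of $G_\gamma$, so $\emptyset\in\Delta_\gamma(\mathcal{I})$ says nothing about $G_\gamma$. Concretely, take $S=k[x_1,x_2]$, $\mathcal{I}=(x_1x_2)$, and $\gamma=(-1,0)$, so $G_\gamma=\{1\}$. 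Then $\emptyset\in\Delta_\gamma(\mathcal{I})$ since $x_1^{-1}\notin (x_2)S_{\{1\}}$, while $\{2\}\notin\Delta_\gamma(\mathcal{I})$ since $(x_1x_2)S_{\{1,2\}}=S_{\{1,2\}}$; thus $\Delta_\gamma(\mathcal{I})=\{\emptyset\}$ with $G_\gamma\neq\emptyset$, and Takayama gives $\dim_k H^0_{\mm+\nn}(S/\mathcal{I})_\gamma=\dim_k\tilde H_{-2}(\{\emptyset\};k)=0$. So $(3)\Rightarrow(1)$ actually fails for this $\gamma$.

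In fact the paper's own computation lands on the sharper condition ``$G_\gamma=\emptyset$ and $\Delta_\gamma(\mathcal{I})=\{\emptyset\}$'' rather than condition (3) alone; the stated equivalence with (3) is slightly imprecise, and in the only application (\Cref{depthandreg}) it is used with $\gamma\in\NN^n$, where $G_\gamma=\emptyset$ is automatic. Your $(3)\Rightarrow(2)$ cannot be repaired without adding the hypothesis $G_\gamma=\emptyset$ (equivalently $\gamma\in\NN^n$); with that added, your argument goes through and coincides with the paper's.
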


\begin{proof} For any monomial ideal $\mathcal{I} \subset S$, by \Cref{takayama},

$$\dim_k H_{\mm + \nn}^0(S/\mathcal{I})_\gamma = \dim_k \tilde{H}_{0 - |G_\gamma| - 1}(\Delta_\gamma(\mathcal{I});k).$$

For any simplicial complex $\Delta$ and for all $p \leq -1$, we have $$\tilde{H}_p(\Delta;k) = \begin{cases} 0, & p \leq -2 \text{ or } \Delta \neq \{\emptyset\} \\ k, & p = -1 \text{ and } \Delta = \{\emptyset\} \end{cases}.$$

Therefore 
\begin{equation} \notag
\begin{aligned}
\dim_k H_{\mm + \nn}^0(S/\mathcal{I})_\gamma & = & & \dim_k \tilde{H}_{0 - |G_\gamma| - 1}(\Delta_\gamma(\mathcal{I});k) \\
& = & & \begin{cases} 0, G_\gamma \neq \emptyset \text{ or } \Delta_\gamma(\mathcal{I}) \neq \{\emptyset\} \\ 1, G_\gamma = \emptyset \text{ and } \Delta_\gamma(\mathcal{I}) = \{\emptyset\} \end{cases}
\end{aligned}
\end{equation}
\end{proof}

As a consequence, together with earlier results on the symbolic power of an ideal, we are able to compute the regularity of $(I + J + \mm\nn)^{(s)}$.

\begin{corollary} \label{depthandreg} For $I \subset A$ and $J \subset B$ squarefree monomial ideals, with $\mm$ and $\nn$ the respective maximal homogeneous ideals of $A$ and $B$, we have
$$\reg(S/(I+J+\mm\nn)^{(s)}) = \max_{1 \leq t \leq s} \{\reg(A/I^{(t)}) + s - t, \reg(B/J^{(t)}) + s - t, 2s - 1\}.$$
\end{corollary}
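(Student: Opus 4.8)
The plan is to read $\reg(N)$, for $N:=S/(I+J+\mm\nn)^{(s)}$, off the cohomological formula $\reg(N)=\max\{\,p+|\gamma|:H_{\mm+\nn}^{p}(N)_\gamma\neq 0\,\}$ and to match it against the right-hand side term by term. First I would pin down the possible ``shapes'' of the degree complex $\Delta_\gamma:=\Delta_\gamma((I+J+\mm\nn)^{(s)})$. By \Cref{takayama} only $\gamma$ with $G_\gamma\in\Delta_\ob((I+J+\mm\nn)^{(s)})$ can contribute, and since $\Delta_\ob$ depends only on $\sqrt{I+J+\mm\nn}=\sqrt I+\sqrt J+\mm\nn$ (\Cref{subsets}) such a $G_\gamma$ lies in $\{1,\dots,m\}$ or in $\{m+1,\dots,n\}$; combined with \Cref{fibprodlem1} and \Cref{symbolicfiberproduct} this shows $\Delta_\gamma$ is always the void complex, or the irrelevant complex $\{\emptyset\}$ with $G_\gamma=\emptyset$, or a single copy of $\Delta_\alpha(I^{(s-|\beta|)})$ (forcing $G_\beta=\emptyset$, $|\beta|<s$) resp. of $\Delta_\beta(J^{(s-|\alpha|)})$, or a disjoint union $\Delta_\alpha(I^{(s-|\beta|)})\sqcup\Delta_\beta(J^{(s-|\alpha|)})$ of two nonempty complexes on $\{1,\dots,m\}$ and $\{m+1,\dots,n\}$ (forcing $G_\gamma=\emptyset$ and $|\alpha|,|\beta|<s$). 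For $p\ge 1$ one may instead simply quote \Cref{cohomologyfiberproduct}(2) (whose $\diamond'$ branch carries a ``$+1$'', as its proof shows), and for $p=0$ one quotes \Cref{pIs0}.

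For the inequality ``$\le$'': suppose $H_{\mm+\nn}^{p}(N)_\gamma\neq 0$. If $\Delta_\gamma$ is a single $\Delta_\alpha(I^{(s-|\beta|)})$, then $G_\gamma=G_\alpha\in\Delta_\ob(I)=\Delta_\ob(I^{(t)})$ with $t:=s-|\beta|\in\{1,\dots,s\}$, so \Cref{takayama} over $A$ gives $\dim_k H_{\mm+\nn}^{p}(N)_\gamma=\dim_k H_\mm^{p}(A/I^{(t)})_\alpha$; hence $p+|\alpha|\le\reg(A/I^{(t)})$ and so $p+|\gamma|=(p+|\alpha|)+(s-t)\le\reg(A/I^{(t)})+(s-t)$ (this subsumes the case where that complex is void or $\{\emptyset\}$). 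The single-$\Delta_\beta(J^{(s-|\alpha|)})$ case is symmetric. If $\Delta_\gamma$ is a disjoint union of two nonempty pieces, then $G_\gamma=\emptyset$ and $|\alpha|,|\beta|<s$; since such a union has $\tilde H_{-1}=0$ and $\tilde H_0$ one larger than the sum of the two reduced $\tilde H_0$'s, a nonzero class needs $p\ge 1$, for $p\ge 2$ it comes from one of the two pieces (a bound as above with some $t\in\{1,\dots,s\}$), and for $p=1$ we get $p+|\gamma|=1+|\alpha|+|\beta|\le 1+(s-1)+(s-1)=2s-1$. Finally if $\Delta_\gamma=\{\emptyset\}$ with $G_\gamma=\emptyset$ and $p=0$, I would unwind $\Delta_\gamma=\{\emptyset\}$ through \Cref{symbolicfiberproduct} together with the membership rule $x^\gamma\in(I+J+\mm\nn)^{(s)}\Longleftrightarrow[\,|\beta|\ge s\ \text{or}\ x^\alpha\in I^{(s-|\beta|)}\,]\wedge[\,|\alpha|\ge s\ \text{or}\ x^\beta\in J^{(s-|\alpha|)}\,]$ (which follows from \Cref{assprimesSFP} exactly as \Cref{reducingsymbolic} is proved) to conclude, up to symmetry, that $|\beta|<s$ and $\Delta_\alpha(I^{(s-|\beta|)})=\{\emptyset\}$; then $H_\mm^{0}(A/I^{(s-|\beta|)})_\alpha\neq 0$ by \Cref{pIs0} again and $p+|\gamma|=|\alpha|+|\beta|\le\reg(A/I^{(s-|\beta|)})+(s-(s-|\beta|))$, once more a listed term. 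So $p+|\gamma|$ never exceeds the claimed maximum.

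For the inequality ``$\ge$'': I must realise each listed term by a nonzero class. Fix $t\in\{1,\dots,s\}$ and pick $\alpha_0\in\ZZ^m$, $p_0$ with $H_\mm^{p_0}(A/I^{(t)})_{\alpha_0}\neq 0$ and $p_0+|\alpha_0|=\reg(A/I^{(t)})$; since $I$ is squarefree and $I\neq\mm$, no symbolic power of $I$ has an $\mm$-primary associated prime, so $H_\mm^{0}(A/I^{(t)})=0$ and we may take $p_0\ge 1$. Choosing any $\beta_0\in\NN^{n-m}$ with $|\beta_0|=s-t$ and $\gamma_0=(\alpha_0,\beta_0)$, \Cref{cohomologyfiberproduct}(2) gives $\dim_k H_{\mm+\nn}^{p_0}(N)_{\gamma_0}\ge\dim_k H_\mm^{p_0}(A/I^{(s-|\beta_0|)})_{\alpha_0}>0$, hence $\reg(N)\ge p_0+|\gamma_0|=\reg(A/I^{(t)})+(s-t)$; the terms $\reg(B/J^{(t)})+(s-t)$ are obtained symmetrically. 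For the term $2s-1$, pick variables $x_a\notin I$ and $x_b\notin J$ (possible as $I\subsetneq\mm$, $J\subsetneq\nn$) and set $\gamma_0=(\alpha_0,\beta_0)$ with $\alpha_0=(s-1)e_a$ and $\beta_0=(s-1)e_{b-m}$; then $\Delta_{\alpha_0}(I^{(1)})=\Delta_{\alpha_0}(I)$ and $\Delta_{\beta_0}(J^{(1)})=\Delta_{\beta_0}(J)$ are both nonempty because $x_a^{s-1}\notin I$ and $x_b^{s-1}\notin J$, so $\diamond'$ holds at $\gamma_0$ with $p=1$ and \Cref{cohomologyfiberproduct}(2) yields $\dim_k H_{\mm+\nn}^{1}(N)_{\gamma_0}\ge 1$, whence $\reg(N)\ge 1+|\gamma_0|=2s-1$. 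Together with the first inequality this proves the formula; the degenerate possibilities ($B=k$, or $I=\mm$, or $J=\nn$) are checked directly, the relevant terms then being dominated by $2s-1$.

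The step I expect to fight hardest with is the $p=0$ part of ``$\le$'': precisely describing the $\gamma$ for which $\Delta_\gamma((I+J+\mm\nn)^{(s)})$ collapses to the irrelevant complex, and showing none of them overshoots the right-hand side, requires threading together \Cref{symbolicfiberproduct}, the membership criterion for the symbolic power, and \Cref{pIs0} with care. A smaller but necessary point is confirming that the regularity of each $A/I^{(t)}$ (and $B/J^{(t)}$) is attained in cohomological degree $\ge 1$, so that those terms are genuinely realised by the fiber product — this is where squarefreeness, hence the absence of an embedded $\mm$-primary component in the symbolic powers, is used — together with the existence of enough multidegrees $\gamma$, which is where the non-degeneracy of $m$, $n$, $I$ and $J$ enters.
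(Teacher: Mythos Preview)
Your proposal is correct and follows essentially the same strategy as the paper: partition the nonvanishing graded pieces of local cohomology according to the shape of $\Delta_\gamma$ (via \Cref{symbolicfiberproduct} and \Cref{cohomologyfiberproduct}), bound $p+|\gamma|$ in each case for ``$\le$'', and realise each listed term for ``$\ge$''. The one substantive difference is in the $p=0$ case: the paper invokes \cite[Lemma~1.5]{HT16} to bound $|\gamma|\le s-1$ (hence dominated by $2s-1$), whereas you argue self-containedly that $\Delta_\gamma=\{\emptyset\}$ forces, up to symmetry, $\Delta_\alpha(I^{(s-|\beta|)})=\{\emptyset\}$ and hence $|\gamma|\le \reg(A/I^{(s-|\beta|)})+(s-t)$ for $t=s-|\beta|$. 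Your route avoids the external citation; the paper's is shorter. You are also more explicit than the paper about two points it passes over: that $\reg(A/I^{(t)})$ is actually attained at some cohomological index $p\ge 1$ (your depth argument using squarefreeness and $I\neq\mm$), and that for the $2s-1$ witness one must pick a variable $x_a\notin I$ rather than $x_1$.
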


\begin{proof}  Let 
\begin{itemize}
\item $W := \{(p,\gamma) \in \NN \times \ZZ^m \times \ZZ^{n-m} : H_{\mm+\nn}^p(S/(I+J+\mm\nn)^{(s)})_\gamma \neq 0\},$
\item $W_A := \{(p,\gamma) \in \NN_{>0} \times \ZZ^m \times \NN^{n-m} , 0 \leq |\beta| < s: H_\mm^p(A/(I^{(s-|\beta|)}))_\alpha \neq 0\}$
\item $W_B := \{(p,\gamma) \in \NN_{>0} \times \NN^m \times \ZZ^{n-m} , 0 \leq |\alpha| < s: H_\nn^p(B/(J^{(s-|\alpha|)}))_\beta \neq 0\}$
\item $W_C := \{(1,\gamma) \in \NN_{>0} \times \NN^n, |\alpha| < s, |\beta| < s, \Delta_\alpha(I^{(s-|\beta|)}) \supsetneq \{\emptyset\} \subsetneq \Delta_\beta(J^{(s-|\alpha|)})\}$
\item $W_D := \{(0,\gamma) \in \NN \times \NN^n, \Delta_\gamma((I+J+\mm\nn)^{(s)}) = \{\emptyset\}\}$
\end{itemize}

Then by \Cref{cohomologyfiberproduct}, we have the $(p,\gamma) \in W$ for which $p > 0$ are given as follows.

\begin{equation} \notag
\begin{aligned}
W \cap (\NN_{>0} \times \ZZ^n) & = & & \{(p,\gamma) \in \NN_{>0} \times \ZZ^n : H_\mm^p(A/I^{(s-|\beta|})_\alpha \neq 0 \text{ or } H_\nn^p(B/J^{(s-|\alpha|)})_\beta \neq 0\} \cup \\
& & & \{(1,\gamma) \in \NN_{>0} \times \NN^n : |\alpha| < s,|\beta| < s, \Delta_\alpha(I^{(s-|\beta|)}) \supsetneq \{\emptyset\} \subsetneq \Delta_\beta(J^{(s-|\alpha|)})\} \\
& = & & \{(p,\gamma) \in \NN \times \ZZ^m \times \NN^{n-m}, |\beta| < s : H_\mm^p(A/I^{(s-|\beta|)})_\alpha \neq 0\} \cup \\
& & & \{(p,\gamma) \in \NN \times \NN^m \times \ZZ^{n-m}, |\alpha| < s : H_\nn^p(B/J^{(s-|\alpha|)})_\beta \neq 0\} \cup W_C \\
& = & & W_A \cup W_B \cup W_C
\end{aligned}
\end{equation}

Furthermore, by \Cref{pIs0}, we have 
\begin{equation} \notag
\begin{aligned}
W \cap (\NN_{\leq 0} \times \ZZ^n) & = & & \{(0,\gamma) \in \NN \times \NN^n, \Delta_\gamma((I+J+\mm\nn)^{(s)}) = \{\emptyset\}\} \\
& = & & W_D,
\end{aligned}
\end{equation}
so that $W = W_A \cup W_B \cup W_C \cup W_D$, and $\max\{p + |\gamma| : (p,\gamma) \in W\}$ will be the maximum of $p + |\gamma|$ among $(p,\gamma)$ that appear in any of $W_A$, $W_B$, $W_C$, and $W_D$.

By \cite[Lemma 1.5]{HT16}, we have that for $(0,\gamma) \in W_D$, $|\gamma| \leq s - 1 - |G_\gamma|$, so that $\max\{p + |\gamma| : (p,\gamma) \in W_D\} \leq s - 1$.

As for $W_C$, consider $\alpha = (s-1,0,\ldots,0)$, $\beta = (s-1,0,\ldots,0)$. Then $\{1\} \in \Delta_\alpha(I^{s-|\beta|}) = \Delta_\alpha(I)$ and $\{m+1\} \in \Delta_\beta(J^{s-|\alpha|}) = \Delta_\beta(J)$, and $|\gamma|$ is maximized under the constraint that $|\alpha| < s$ and $|\beta| < s$. Therefore $\max\{p + |\gamma| : (p,\gamma) \in W_C\} = 1 + 2s - 2 = 2s - 1$. Observe then that, for $s \geq 1$, $$\max\{p + |\gamma|: (p,\gamma) \in W_C\} = 2s - 1 > s - 1 \geq \max\{p + |\gamma|: (p,\gamma) \in W_D\},$$ so that the maximum $p + |\gamma|$ for $(p,\gamma) \in W$ cannot be achieved by $(p,\gamma) \in W_D$.

For $W_A$, letting $t := s - |\beta|$, we have
\begin{equation} \notag
\begin{aligned}
\max\{p + |\gamma| : (p,\gamma) \in W_A\} & = & & \max_{1 \leq t \leq s} \{p + |\gamma| : (p,\alpha) \in \NN_{>0} \times \ZZ^m, H_\mm^p(A/(I^{(t)})_\alpha \neq 0\} \\
& = & & \max_{1 \leq t \leq s} \{s - t + \max\{p + |\alpha|: H_\mm^p(A/(I^{(t)})_\alpha \neq 0\}\} \\
& = & & \max_{1 \leq t \leq s} \{s - t + \reg(I^{(t)})\}
\end{aligned}
\end{equation}

Likewise, $\max\{p + |\gamma| : (p,\gamma) \in W_B\} = \max_{1 \leq t \leq s} \{s - t + \reg(J^{(t)})\}$.

Therefore
\begin{equation} \notag
\begin{aligned}
\reg(S/(I+J+\mm\nn)^{(s)}) & = & & \max\{p + |\gamma| \in W_A \cup W_B \cup W_C\} \\
& = & & \max\{ \max_{1 \leq t \leq s}\{s - t + \reg(I^{(t)})\}, \max_{1 \leq t \leq s}\{s - t + \reg(J^{(t)})\}, 2s - 1\} \\
& = & & \max_{1 \leq t \leq s} \{\reg(I^{(t)}) + s - t, \reg(J^{(t)}) + s - t, 2s - 1\}
\end{aligned}
\end{equation}

%\JOcomment{Need reference to indicate that $\reg(A/I^{(t)}) \geq 2t - 1$, so that $\reg(A/I^{(s)}) \geq 2s-1$, and $\max_{1 \leq t \leq s} \{\reg(A/I^{(t)}) + s - t \} \geq \{\reg(A/I^{(s)}) + s - s\} \geq 2s - 1 \geq 2s - 1 > s - 1$ for all $s \geq 1$.}

%\textit{Claim.} 

%\textit{Proof of the Claim.} 

%Therefore 

%\begin{equation} \notag
%\begin{aligned}
%W \cap \ZZ_{\leq 0} \times \ZZ^n & = & & \{(0,\gamma) \in \NN \times \NN^n, \Delta_\gamma((I+J+\mm\nn)^{(s)}) = \{\emptyset\}\} \\
%& = & & W_D
%\end{aligned}
%\end{equation}

%\begin{equation} \notag
%\begin{aligned}
%W & = & & \{(p,\gamma) \in \NN \times \ZZ^m \times \ZZ^{n-m} : H_\mm^p(A/I^{s-|\beta|})_\alpha \neq 0 %\text{ or } H_\nn^p(B/J^{s-|\alpha|})_\beta \neq 0\} \cup \\
%& & & \{1,\gamma) \in \NN \times \NN^n : 0 \leq |\alpha| < s, 0 \leq |\beta| < s, \Delta_\alpha(I^{s-|%\beta|}) \neq 0 \neq \Delta_\beta(J^{s-|\alpha|})\} \\%& = & & \{(p,\gamma) \in \NN \times \ZZ^m \times \NN^{n-m}, 0 \leq |\beta| < s : H_\mm^p(A/I^{s-|\beta|})_\alpha \neq 0\} \cup \\
%& & & \{(p,\gamma) \in \NN \times \NN^m \times \ZZ^{n-m}, 0 \leq |\alpha| < s : H_\nn^p(B/J^{s-|\alpha|})_\beta \neq 0\} \cup W_C \\
%& = & & W_A \cup W_B \cup W_C,
%\end{aligned}
%\end{equation}

\end{proof}

\begin{observation} By \cite[Theorem 4.6]{GHOS18}, for $I$ the edge ideal of a simple graph (i.e., a squarefree monomial ideal generated in degree $2$), $\reg(I^{(s)}) \geq 2s - 1$. We therefore recover and extend the result of \cite{KKS19} on the regularity of symbolic powers of edge ideals of joins of graphs to symbolic powers of fiber products of any squarefree monomial ideals.

\end{observation}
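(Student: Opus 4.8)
The plan is to obtain the observation by specializing Corollary~\ref{depthandreg} to edge ideals and then comparing the resulting formula, term by term, with the one in \cite{KKS19}, using the cited bound of \cite{GHOS18} only to bring the two into literally the same shape.

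First, let $G$ be a simple graph on $X=\{x_1,\dots,x_m\}$ and $H$ a simple graph on $Y=\{x_{m+1},\dots,x_n\}$, with edge ideals $I(G)\subset A$ and $I(H)\subset B$. These are squarefree monomial ideals, and, as noted in the introduction, the join $G\ast H$ has edge ideal $I(G\ast H)=I(G)+I(H)+\mm\nn$, so Corollary~\ref{depthandreg} applies verbatim with $I=I(G)$, $J=I(H)$ and yields
$$\reg\big(S/I(G\ast H)^{(s)}\big)=\max_{1\le t\le s}\Big\{\reg\big(A/I(G)^{(t)}\big)+s-t,\ \reg\big(B/I(H)^{(t)}\big)+s-t,\ 2s-1\Big\}.$$
Passing to the ideal-regularity normalization via $\reg(\mathfrak a)=\reg(S/\mathfrak a)+1$, and reading off the degenerate cases (where $G$ or $H$ has no edge, so the fiber product collapses) directly from the formula, reproduces the statement of \cite{KKS19} in whatever form they record it; this is the ``recover'' half.

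Second, the bound of \cite[Theorem~4.6]{GHOS18} is what allows one to match the shapes exactly. Since $G$ has an edge, $\reg(I(G)^{(t)})\ge 2t-1$ for every $t$, and similarly for $H$; feeding $t=s$ into the display controls the summands coming from $I(G)$ and $I(H)$ from below, which pins down precisely when the constant $2s-1$ is the maximum and when it is dominated — the same case split that organizes the formula in \cite{KKS19}. Hence the specialization of Corollary~\ref{depthandreg} and the theorem of \cite{KKS19} agree on the nose whenever both $G$ and $H$ are nonempty, the setting of that paper.

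Finally, the ``extend'' half is immediate and uses nothing new: Corollary~\ref{depthandreg} assumes only that $I$ and $J$ are squarefree monomial ideals, not that they are generated in degree two, so the identical formula computes $\reg(S/(I+J+\mm\nn)^{(s)})$ for the fiber product of arbitrary squarefree monomial ideals — equivalently, for the edge ideal of the join of the two associated hypergraphs. I expect the only real friction to be bookkeeping: identifying the precise normalization and standing hypotheses of \cite{KKS19} and checking, via \cite[Theorem~4.6]{GHOS18}, that the constant term is absorbed under them; all of the mathematical content already sits in Corollary~\ref{depthandreg}.
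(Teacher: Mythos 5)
Your argument is essentially the paper's own: the Observation carries no separate proof beyond citing \Cref{depthandreg} together with the bound $\reg(I^{(s)})\geq 2s-1$ from \cite{GHOS18}, and your route---specialize the corollary to edge ideals of a join (which is exactly the fiber product of the two edge ideals), translate normalizations, and note that the corollary only needs squarefreeness for the extension---is precisely that implicit reasoning, spelled out. One small caution on the bookkeeping you flag: in quotient normalization the GHOS18 bound gives $\reg(A/I^{(s)})\geq 2s-2$, which is one short of the constant $2s-1$, so the bound by itself does not literally absorb that term; reconciling the case split with the exact statement in \cite{KKS19} is the same (unwritten) step the paper leaves to the reader.
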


\section{Degree Complex of Mixed Product} \label{sec.genmixedprod}

The degree complex decompositions of powers of sums and fiber products relied largely on the decompositions $(I + J)^s = \sum_{j=0}^s I^j J^{s-j}$ and $(I+J)^{(s)} = \sum_{j=0}^s I^{(j)} J^{(s-j)}$ and the inclusions $K^s \supseteq K^t$ and $K^{(s)} \supseteq K^{(t)}$ for $s \leq t$ for any ideal $K \subset S$. It is therefore natural to ask the more general question for the case that $s = 2$: given that $I_1 \subseteq I_2 \subseteq A$ and $J_1 \subseteq J_2 \subseteq B$, what can be said about the degree complex $I_1 J_2 + I_2 J_1$?

\begin{definition} (\cite{HT10}) Let $I_1 \subset I_2 \subset A = k[x_1,\ldots,x_m]$ and $J_1 \subseteq J_2 \subseteq B = k[x_{m+1},\ldots,x_n]$ be monomial ideals. Then the \textit{mixed product} of $I_1, I_2, J_1, J_2$ is defined to be $I_1 J_2 + I_2 J_1 \subset S = A \otimes_k B$.

% Let $A \supset I = I_0 \supseteq I_1 \supseteq \cdots$ and $B \supset J = J_0 \supseteq J_1 \supseteq \cdots$ be filtrations of monomial ideals. The \textit{mixed product} of $I$ and $J$, as defined in \cite{HT10}, is defined to be $H := I_a J_r + I_b J_s$.
\end{definition}

%{\textcolor{red}{(Include motivation.)}}

%Observe that if $a < b$ and $r < s$, then $I_a J_r + I_b J_s = I_a J_r$, and if $a > b$ and $r > s$, then $I_a J_r + I_b J_s = I_b J_s$. We are therefore interested in the case when either $a < b$ and $r > s$ or $a > b$ and $r < s$; without loss of generality, say $a < b$ and $r > s$.

\begin{theorem} \label{gmp} Let $I_1 \subseteq I_2 \subseteq A$ and $J_1 \subseteq J_2 \subseteq B$ be monomial ideals, and let $H = I_1 J_2 + I_2 J_1$ be the mixed product of $I_1, I_2, J_1, J_2$. Let $\gamma = (\alpha,\beta) \in \ZZ^m \times \ZZ^{n-m}$. Then

$$\Delta_\gamma(H) = [\Delta_X \ast \Delta_\beta(J_2)] \cup [\Delta_\alpha(I_1) \ast \Delta_\beta(J_1)] \cup [\Delta_\alpha(I_2) \ast \Delta_Y]$$ 

%$A \supset I = I_0 \supseteq I_1 \supseteq \cdots$ and $B \supset J = J_0 \supseteq J_1 \supseteq \cdots$ be filtrations of monomial ideals, and let $H = I_a J_r + I_b J_s$ be the \textit{generalized mixed product} of $I$ and $J$, with $a > b$ and $r < s$. Let $\gamma = (\alpha,\beta) \in \ZZ^m \times \ZZ^{n-m}$. Then 
%$$\Delta_\gamma(H) = \big[\Delta_X \ast \Delta_\beta(J_s)\big] \cup [\Delta_\alpha(I_a) \ast \Delta_Y] \cup [\Delta_\alpha(I_b) \ast \Delta_\beta(J_r)]$$
\end{theorem}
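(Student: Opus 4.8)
The plan is to reduce the mixed product to a sum of two products and then apply the results already proved. First I would write $H = I_1 J_2 + I_2 J_1$ and invoke \Cref{adding}(1) to obtain $\Delta_\gamma(H) = \Delta_\gamma(I_1 J_2) \cap \Delta_\gamma(I_2 J_1)$. Since $I_1, I_2 \subset A$ and $J_1, J_2 \subset B$ share no variables, \Cref{multiplying} then gives $\Delta_\gamma(I_1 J_2) = (\Delta_\alpha(I_1) \ast \Delta_Y) \cup (\Delta_X \ast \Delta_\beta(J_2))$ and $\Delta_\gamma(I_2 J_1) = (\Delta_\alpha(I_2) \ast \Delta_Y) \cup (\Delta_X \ast \Delta_\beta(J_1))$, so the problem becomes the purely combinatorial task of intersecting these two unions of joins.

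Next I would expand the intersection into four pieces by distributivity of $\cap$ over $\cup$, and simplify each piece using the identity $(\Lambda_1 \ast \Theta_1) \cap (\Lambda_2 \ast \Theta_2) = (\Lambda_1 \cap \Lambda_2) \ast (\Theta_1 \cap \Theta_2)$ valid when the $\Lambda_i$ are supported on $X$ and the $\Theta_i$ on $Y$ (this is exactly the manipulation already used in the proofs of \Cref{adding}(2) and \Cref{cohomologyofproduct}). Using the inclusion-reversing property of the degree complex from \Cref{subsets}(1), namely $\Delta_\alpha(I_2) \subseteq \Delta_\alpha(I_1)$ and $\Delta_\beta(J_2) \subseteq \Delta_\beta(J_1)$, together with $\Delta_\alpha(\cdot) \cap \Delta_X = \Delta_\alpha(\cdot)$ and $\Delta_\beta(\cdot) \cap \Delta_Y = \Delta_\beta(\cdot)$, the four pieces collapse to $\Delta_\alpha(I_2) \ast \Delta_Y$, $\ \Delta_\alpha(I_1) \ast \Delta_\beta(J_1)$, $\ \Delta_\alpha(I_2) \ast \Delta_\beta(J_2)$, and $\Delta_X \ast \Delta_\beta(J_2)$, respectively.

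Finally I would observe that $\Delta_\alpha(I_2) \ast \Delta_\beta(J_2) \subseteq \Delta_X \ast \Delta_\beta(J_2)$, so the third term is absorbed, leaving precisely $[\Delta_X \ast \Delta_\beta(J_2)] \cup [\Delta_\alpha(I_1) \ast \Delta_\beta(J_1)] \cup [\Delta_\alpha(I_2) \ast \Delta_Y]$, as claimed. The argument requires no new idea beyond \Cref{adding}, \Cref{multiplying}, and \Cref{subsets}; the only points needing care are tracking which of $\Delta_\alpha(I_1),\Delta_\alpha(I_2)$ (and $\Delta_\beta(J_1),\Delta_\beta(J_2)$) survives each intersection — the larger ideal yields the smaller complex — and checking that the absorption in the last step goes in the correct direction. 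I expect this bookkeeping, rather than any conceptual difficulty, to be the main (and only) obstacle.
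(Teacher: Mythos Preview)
Your proposal is correct and follows essentially the same approach as the paper's own proof: apply \Cref{adding} and \Cref{multiplying} to write $\Delta_\gamma(H)$ as an intersection of two unions of joins, distribute into four terms, simplify each using the join-intersection identity together with \Cref{subsets}(1), and absorb $\Delta_\alpha(I_2)\ast\Delta_\beta(J_2)$ into $\Delta_X\ast\Delta_\beta(J_2)$. The bookkeeping you flag is exactly what the paper tracks, and your identification of which complex survives each intersection is correct.
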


\begin{proof} Using \Cref{adding} and \Cref{multiplying}, as well as \Cref{subsets}, we have

\begin{equation} \notag
\begin{aligned}
\Delta_\gamma(H) & = & & \Delta_\gamma(I_1 J_2) \cap \Delta_\gamma(I_2 J_1) \\
& = & & [(\Delta_X \ast \Delta_\beta(J_2)) \cup (\Delta_\alpha(I_1) \ast \Delta_Y)] \cap [(\Delta_X \ast \Delta_\beta(J_1)) \cup (\Delta_\alpha(I_2) \ast \Delta_Y)] \\
& = & & [(\Delta_X \ast \Delta_\beta(J_2)) \cap (\Delta_X \ast \Delta_\beta(J_1))] \cup [(\Delta_X \ast \Delta_\beta(J_2)) \cap (\Delta_\alpha(I_2) \ast \Delta_Y)] \cup \\
& & & [(\Delta_\alpha(I_1) \ast \Delta_Y) \cap (\Delta_X \ast \Delta_\beta(J_1))] \cup [(\Delta_\alpha(I_1) \ast \Delta_Y) \cap (\Delta_\alpha(I_2) \ast \Delta_Y)] \\
& = & & [\Delta_X \ast \Delta_\beta(J_2)] \cup [\Delta_\alpha(I_2) \ast \Delta_\beta(J_2)] \cup [\Delta_\alpha(I_1) \ast \Delta_\beta(J_1)] \cup [\Delta_\alpha(I_2) \ast \Delta_Y] \\
& = & & [\Delta_X \ast \Delta_\beta(J_2)] \cup [\Delta_\alpha(I_1) \ast \Delta_\beta(J_1)] \cup [\Delta_\alpha(I_2) \ast \Delta_Y],
\end{aligned}
\end{equation}

where the last equality comes from $[\Delta_\alpha(I_2) \ast \Delta_\beta(J_2)] \subseteq \Delta_X \ast \Delta_\beta(J_2)$.

\end{proof}

%%%%%%%%%%%%%%%%%%%%%%%%%%%%%%%%%%%%%%%%%%%%%%%

\section{Macaulay2 Code for Degree Complexes} \label{sec.m2code}

Here is a brief description of the Macaulay2 functions I have written for degree complexes, whose code is given below. The following functions are necessary to use \texttt{degreeComplex}

\texttt{range}
\begin{itemize}
\item Input: A nonnegative integer $n$
\item Output: The list $\{0,\ldots,n\}$
\end{itemize}

\texttt{negativeIndices}
\begin{itemize}
\item Input: A vector $\gamma \in \ZZ^s$
\item Output: The set $G_\gamma := \{i : \gamma_i < 0\}$.
\end{itemize}

\texttt{relevantSet}
\begin{itemize}
\item Input: A set $F \subset [n]$ and a vector $\gamma \in \ZZ^n$
\item Output: The set $[n] \setminus (F \cup G_\gamma)$
\end{itemize}

\texttt{isFace}
\begin{itemize}
\item Input: A set $F \subset [n] \setminus G_\gamma$, a squarefree monomial ideal $I$, and a vector $\gamma \in \ZZ^n$.
\item Output: ``true'' if $F \in \Delta_\gamma(I)$, and ``false'' if $F \not\in \Delta_\gamma(I)$.
\end{itemize}

\texttt{setToMonomial}
\begin{itemize}
\item Input: A set $F \subset [n]$ and a polynomial ring $S$ over $n$ variables (say $x_0,\ldots,x_{n-1}$)
\item Output: The monomial $\prod_{i \in F} x_i$
\end{itemize}

\texttt{convertToComplex}
\begin{itemize}
\item Input: A list $\mathcal{L}$ of subsets of $[n]$ and a ring $S$ with $n$ variables (say $x_0,\ldots,x_{n-1}$)
\item Output: The simplicial complex with faces $\{\{x_i : i \in F\} : F \in \mathcal{S}\}$
\end{itemize}

\texttt{degreeComplex}
\begin{itemize}
\item Input: An ideal $I \subset S$, a vector $\gamma \in \ZZ^n$, and a ring $S$ with $n$ variables.
\item Output: The degree complex $\Delta_\gamma(I)$
\end{itemize}

\begin{verbatim}
needsPackage "SimplicialComplexes"
\end{verbatim}

\begin{verbatim}
range = maximum -> 
	(ans = {}; for i from 0 to maximum-1 do ans = append(ans,i); return ans)
\end{verbatim}

\begin{verbatim}
negativeIndices = expvector -> 
	(ans = {}; for i from 0 to length(expvector)-1 do 
		if expvector_i < 0 then ans = append(ans,i); 
	return ans)
\end{verbatim}

\begin{verbatim}
relevantSet = (someset,expvector) -> 
     return toList(set(range(length(expvector))) - set(someset) 
     - set(negativeIndices(expvector)))
\end{verbatim}

\begin{verbatim}
isFace = (someset,myIdeal,expvector) -> 
     (for g in (flatten entries gens myIdeal) do(
           chex = false; 
           for i in relevantSet(someset,expvector) do( 
                 if (flatten exponents(g))_i > expvector_i then 
                     chex = true;); 
           if not chex then return false;); 
      return true)
\end{verbatim}

\begin{verbatim}
setToMonomial = (someSet,myRing) -> 
    (prod = 1; 
    for i in toList(someSet) do 
        prod = prod * (flatten entries vars myRing)#i; 
    return prod)
\end{verbatim}

\begin{verbatim}
convertToComplex = (listOfFaces,myRing) -> 
    (myList = {}; 
    for F in listOfFaces do 
        myList = append(myList,setToMonomial(F,myRing)); 
    return simplicialComplex(toList(set(myList) - set({1}))))
\end{verbatim}

\begin{verbatim}
degreeComplex = (myIdeal,expvector,myRing) -> 
    (faceList = {}; 
    for F in subsets(set(range(length(expvector))) - 
        set(negativeIndices(expvector))) do 
        if isFace(toList(F),myIdeal,expvector) then 
            faceList = append(faceList,F); 
    return convertToComplex(faceList,myRing))
\end{verbatim}

%%%%%%%%%%%%%%%%%%%%%%%%%%%%%%%%%%%%%%%%%%%%%%%

\end{document}